\documentclass[a4paper,12pt]{amsart}

\usepackage{amssymb,amsbsy,amsmath,amsfonts,amssymb,amscd}
\usepackage{latexsym}
\usepackage{graphics}
\usepackage{color}
\input xy
\xyoption{all}
\newenvironment{satz}[1][Theorem]{%
\vspace{3ex}\noindent\textbf{#1.}\quad\itshape}{%
\par\vspace{3ex}}



\newcommand\sE{{\mathcal E}}
\newcommand\sF{{\mathcal F}}

\newcommand\sL{{\mathcal L}}
\newcommand\sB{{\mathcal B}}
\newcommand\sN{{\mathcal N}}
\newcommand\sX{{\mathcal X}}
\newcommand\sY{{\mathcal Y}}

                 \newcommand\XX{{\mathfrak X}}


\newcommand\la{\lambda}

\newcommand\Ga{\Gamma}
\newcommand\De{\Delta}

\DeclareMathOperator{\Def}{Def}

\DeclareMathOperator{\dlog}{dlog}

\newcommand{\CC}{\ensuremath{\mathbb{C}}}

\newcommand{\ZZ}{\ensuremath{\mathbb{Z}}}
\newcommand{\QQ}{\ensuremath{\mathbb{Q}}}

\newcommand{\sS}{\ensuremath{\mathcal{S}}}

\newcommand{\hol}{\ensuremath{\mathcal{O}}}

\newcommand{\PP}{\ensuremath{\mathbb{P}}}

\newcommand{\ra}{\ensuremath{\rightarrow}}

\def\eea{\end{eqnarray*}}
\def\bea{\begin{eqnarray*}}

\newcommand\dual{\mathrel{\raise3pt\hbox{$\underline{\mathrm{\thinspace d
\thinspace}}$}}}
\newcommand\qe{\ifhmode\unskip\nobreak\fi\quad $\Box$}       

\def\BOX{\hfill\lower.5\baselineskip\hbox{$\Box$}}

\newtheorem{theo}{Theorem}[section]
\newtheorem{remarkk}[theo]{Remark}
\newenvironment{rem}{\begin{remarkk}\rm}{\end{remarkk}}

\newtheorem{defin}[theo]{Definition}

\newtheorem{prop}[theo] {Proposition}
\newtheorem{cor}[theo]{Corollary}
\newtheorem{lemma}[theo]{Lemma}
\newtheorem{example}[theo]{Example}

\newtheorem{claim}[theo]{Claim}

\newcommand{\X}{\ensuremath{\mathcal{X}}}

\DeclareMathOperator{\Aut}{Aut}

\DeclareMathOperator{\inv}{inv}

\begin{document}

\title[Burniat surfaces III]{ Burniat surfaces III: deformations of
automorphisms and extended Burniat surfaces}
\author{I. Bauer, F. Catanese}

\thanks{The present work took place in the realm of the DFG
Forschergruppe 790 "Classification of algebraic
surfaces and compact complex manifolds".
}

\date{\today}

\maketitle

{\em  Dedicated  to
David Mumford.}
\section*{Introduction}

In the present article we continue our investigation, begun in 
\cite{burniat1} and
\cite{burniat2},  of the connected
components of the moduli space (of minimal surfaces $S$ of general 
type)  which contain the Burniat surfaces.
We also correct an error in \cite{burniat2}.

The  main goals that we achieve in this paper are the following:

\begin{enumerate}
\item
We define the family of extended Burniat surfaces for $K^2_S= 3, {\rm 
resp. } \ 4$, and prove that they are
a deformation of the family of nodal  Burniat surfaces with $K^2_S= 
3, {\rm resp. } \ 4$.
\item
We show that the extended Burniat surfaces with $K^2_S= 4$, together
with the nodal  Burniat surfaces with $K^2_S= 4$, form a set  $\sN \sE  \sB _4$
which is a connected component
of the moduli space: thereby we correct theorem 1.1 of \cite{burniat2}.
\item
We show that the extended Burniat surfaces with $K^2_S= 3$, together
with the nodal  Burniat surfaces with $K^2_S= 3$ form an irreducible 
open set $\sN \sE  \sB _3$
of the moduli space, whose closure  $\overline{\sN \sE  \sB _3}$ 
consists of bidouble covers of
normal cubic surfaces in $\PP^3$ and is shown in section 7 to be 
strictly larger than $\sN \sE  \sB _3$.
\item
We answer a question posed on page 562 of \cite{burniat2}, namely,
the integer $m \geq 2 $ in Theorem 1.1 is indeed $ = + \infty$,
and the local moduli space of nodal Burniat surfaces is smooth.
\item
We point out a truly interesting pathology of the moduli space of
varieties with a group $G$ of automorphisms, which is the reason
of our mistake mentioned above (Murphy's law applies then,
but in a different way than foreseen).

It is the fact that for nodal Burniat surfaces $S$, we have a group $G \cong (\ZZ / 2 \ZZ)^2$
of automorphisms, which is also the group of automorphisms of
the canonical model $X$. But whereas $ \Def (X) = \Def  (X, G)$,
$ \Def(S) \neq  \Def (S, G)$: thus even if all deformations of $S$
have a $G$-action, the local moduli space $\Def (S, G)$ for the pairs
yields a proper subvariety  in the smooth  germ $\Def(S)$.
\end{enumerate}

We refer to \cite{burniat1} and \cite{burniat2} for more details 
concerning the
  investigation of the connected
components of the moduli space containing the Burniat surfaces with 
$K^2_S = 6,5,4,2$.

After the results in the present article what remains to be done in 
order to finish this
investigation is to decide, in the case
$K^2_S= 3$ of tertiary Burniat surfaces, whether the irreducible component
mentioned above is also a connected component, describing in detail
all the surfaces which are in the closure and their local deformations.

In \cite{burniat2} we proved that 3 of the 4 irreducible families of 
Burniat surfaces
with $K^2_S  \geq 4$, i.e., of primary and secondary Burniat surfaces, are a
connected component  of the moduli space
of surfaces of general type.

In this paper we consider only   nodal Burniat surfaces with $K^2_S = 
4,3$, showing that a
general deformation of a nodal Burniat surface with $K^2_S = 4$, resp.
with $K^2_S = 3$, is an extended  Burniat surface, still a
bidouble cover
(through the bicanonical map) of a normal Del Pezzo surface of degree 
4 with one ordinary double point, resp. of a cubic surface with three 
nodes.

The  main results of the present paper are  the following:
\begin{theo}\label{main1}
       1) The subset  $\sN \sE \sB_4$ of the  moduli
space of canonical
surfaces of general type $\mathfrak M^{can}_{1,4}$   given by  the 
union of the open set corresponding to
{\em extended} Burniat surfaces with $K^2_S = 4$ with the 
irreducible closed set parametrizing nodal Burniat surfaces with 
$K^2_S = 4$
is an irreducible connected component, normal, unirational
         of  dimension 3.

Moreover  the base of the Kuranishi family of deformations of any 
such a minimal model $S$ is smooth.

\noindent
       2)  The subset  $\sN \sE \sB_3$ of the  moduli
space of canonical
surfaces of general type $\mathfrak M^{can}_{1, 3}$   corresponding to
{\em extended}  and nodal Burniat surfaces with $K^2_S = 3$
is an irreducible open set, normal, unirational
         of  dimension 4.

Moreover the base of the Kuranishi family of $S$ is smooth.
\end{theo}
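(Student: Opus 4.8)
The plan is to pass to the canonical model $X$ and to exploit the presentation, established in the earlier sections, of every such surface as a bidouble (that is, $(\ZZ/2\ZZ)^2$-Galois) cover $f\colon X\to Y$ branched over three divisors $D_1,D_2,D_3$ in prescribed linear systems, where for $K^2_S=4$ the base $Y$ is a normal Del Pezzo surface of degree $4$ with one ordinary double point and for $K^2_S=3$ it is a three-nodal cubic in $\PP^3$. First I would build a parameter space by fibring over the (rational, irreducible) family of admissible base surfaces $Y$, with fibre an open subset of $|D_1|\times|D_2|\times|D_3|$ cutting out the admissible branch data, the nodal Burniat surfaces forming the closed sublocus where this data degenerates so as to create the extra nodes. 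Irreducibility of $\sN\sE\sB_4$ and $\sN\sE\sB_3$ is then immediate, unirationality follows because the construction uses only linear systems over a rational base, and a Riemann--Roch parameter count on the $|D_i|$ yields the dimensions $3$ and $4$.

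Next I would compute $\Def(X)$ from the cover structure. Using $f_*\Oh_X=\Oh_Y\oplus\bigoplus_i L_i^{-1}$, the space $H^1(X,T_X)$ decomposes into its $G$-character eigenspaces; the invariant part is the tangent space to $\Def(X,G)$ and is identified with the natural deformations of the triple $(Y;D_1,D_2,D_3)$, which are unobstructed, so that $\Def(X,G)$ is smooth of the expected dimension. The crucial step --- and the content of goal (5) --- is to prove that the non-invariant summands $H^1(X,T_X)^{\chi}$ vanish for each nontrivial character $\chi$, giving the equality $\Def(X)=\Def(X,G)$; via the character formula this reduces to the vanishing of explicit cohomology groups of the $L_i$-twists on the nodal Del Pezzo (resp. cubic), and granting it, $\Def(X)$ is smooth of dimension $3$ (resp. $4$) with every deformation of $X$ being $G$-equivariant.

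The delicate point, and the source of the announced pathology, is transferring smoothness to the minimal resolution $\pi\colon S\to X$ for the members in which $X$ is singular. Here the $(-2)$-curves over the nodes contribute \emph{non-invariant} classes to $H^1(S,T_S)$, so that $\Def(S,G)$ is a \emph{proper} subvariety of $\Def(S)$; nevertheless every nearby surface still carries a $G$-action, since it resolves a ($G$-equivariantly deformed) $X$, the only failure being that the $G$-action need not deform together with $S$ as an infinitesimal deformation. I would control the passage through the Leray/Burns--Wahl exact sequence relating $H^1(S,T_S)$, $H^1(X,T_X)$ and the local terms $H^0(X,R^1\pi_*T_S)$ at the nodes; since each node is an $A_1$-singularity these local terms are completely understood, and combining the smoothness of $\Def(X)$ with the vanishing of the corresponding term in $H^2(S,T_S)$ gives that $\Def(S)$ is smooth of dimension $3$ (resp. $4$). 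Establishing that these extra, symmetry-breaking directions are genuinely unobstructed is where I expect the real work to lie.

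Finally I would assemble the global statements. Local completeness --- the fact that the natural family realizes the whole (smooth) Kuranishi family of each of its members --- makes $\sN\sE\sB_4$ and $\sN\sE\sB_3$ \emph{open} in the respective moduli spaces, normality follows because each germ is a quotient of a smooth base by the finite group $\Aut(S)$, and irreducibility from the first paragraph gives connectedness. For $K^2_S=4$ I would then prove \emph{closedness}: a degree-$4$ Del Pezzo admits no further admissible degeneration keeping its bidouble cover a canonical model, so any limit of surfaces in $\sN\sE\sB_4$ is again one, and $\sN\sE\sB_4$ is a connected component. For $K^2_S=3$ this closedness fails, since a three-nodal cubic can degenerate to cubics with additional singularities whose bidouble covers still lie in $\overline{\sN\sE\sB_3}$ but are no longer of extended Burniat type; hence one obtains only an open, irreducible, normal set of dimension $4$, in agreement with the statement and the discussion of section 7.
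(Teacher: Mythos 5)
Your outline reproduces the paper's architecture at the level of strategy (explicit families of bidouble covers give irreducibility, unirationality and the dimension counts; a character-theoretic vanishing gives smoothness of the Kuranishi family and hence openness; a degeneration analysis handles closedness for $K^2_S=4$), but two load-bearing steps are genuine gaps. The most serious one is closedness of $\sN \sE \sB_4$, which is precisely what upgrades ``irreducible open set'' to ``connected component''. Your justification --- that a degree-$4$ Del Pezzo ``admits no further admissible degeneration keeping its bidouble cover a canonical model'' --- is false as stated: one-nodal quartic Del Pezzo surfaces do degenerate, for instance to two-nodal surfaces or to surfaces with an $A_2$-point, and these are perfectly good normal Gorenstein Del Pezzo surfaces admitting bidouble covers. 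What the paper actually proves (in its Section 6) is that no such limit can carry the \emph{limiting branch data}: the limit is again a finite $(\ZZ/2\ZZ)^2$-cover of a Gorenstein Del Pezzo whose branch locus must contain at least $8$ lines; this forces the limit surface to have one node, two nodes, or an $A_2$-singularity; the $A_2$ case is then excluded by the local classification of bidouble covers of rational double points (four branch lines through the singular point are impossible), and the two-node case by an arithmetic analysis of the admissible classes $\De_1,\De_2,\De_3$ compatible with $2 \mid \De_i+\De_j$, reducedness, and $p_g=0$. Nothing in your proposal substitutes for this case analysis.

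The second gap concerns the cohomology. The vanishing of the non-invariant part of the deformation space of $X$ (equivalently, completeness of the equivariant deformations of $X$), which you ``grant'', is the entire computational core of the theorem: in the paper it occupies Sections 4 and 5 (the vanishing of the groups $H^0(\Omega^1_{\tilde Y}(\log D_i)(K_{\tilde Y}+L_i))$ and their extended analogues, proved by delicate blow-down arguments to $\PP^2$ with logarithmic sheaves), and the paper in fact obtains $\Def(X)=\Def(X,G)$ not by an eigenspace computation on the singular $X$ but from completeness of the family of bidouble covers via Burns--Wahl, doing the eigenspace computations on the smooth models instead. Moreover, your mechanism for transferring smoothness to $\Def(S)$ --- ``the vanishing of the corresponding term in $H^2(S,T_S)$'' --- fails as literally stated: for both nodal and extended surfaces with $K^2_S=4$ one has $H^2(S,\Theta_S)\neq 0$ (it is one-dimensional, concentrated in a nontrivial character). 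Smoothness of $\Def(S)$ comes instead from a dimension count, namely that the computed $h^1(S,\Theta_S)$ equals the dimension of the explicit family, which after the Burns--Wahl base change deforms $S$ in that many independent directions; in your formulation this requires smoothness of $\Def(X)$ \emph{together with} submersiveness of $\Def(X)\to\mathcal{L}_X$, i.e. knowing that the nodes are smoothed independently inside the family, which again rests on the local analysis and the granted vanishing. So the skeleton is right, but both pillars it stands on are missing.
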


A very surprising and new phenomenon occurs for nodal surfaces,
confirming Vakil's
`Murphy's law' philosophy (\cite{murphy}).

To explain what happens for the moduli spaces of extended and nodal 
Burniat surfaces, let us  recall again an
old result due to Burns and Wahl (cf. \cite{burnswahl}).

Let $S$ be a minimal surface of general type and let $X$ be its 
canonical model. Denote by $\Def(S)$,
resp. $\Def(X)$, the base of the Kuranishi family of $S$, resp. of $X$.

Their result explains the relation between $\Def(S)$ and $\Def(X)$.

\begin{satz}[Theorem (Burns - Wahl)]
 Assume that $K_S$ is not ample and let $p:S \ra X$ be the 
canonical morphism.

  Denote by $\mathcal{L}_X$ the space of local deformations of the 
singularities of $X$ and by
$\mathcal{L}_S$ the space of deformations of a neighbourhood of the 
exceptional curves of $p$. Then
$\Def(S)$ is realized as the fibre product associated to the Cartesian diagram

\begin{equation*}
\xymatrix{
\Def(S) \ar[d]\ar[r] & \mathcal{L}_S \cong \CC^{\nu}, \ar[d]^{\lambda} \\
\Def(X) \ar[r] & \mathcal{L}_X \cong \CC^{\nu} ,}
\end{equation*}
where $\nu$ is the number of rational $(-2)$-curves in $S$, and 
$\lambda$ is a Galois covering
with Galois group $W := \oplus_{i=1}^r W_i$, the direct sum of the 
Weyl groups $W_i$ of the singular points of $X$.
\end{satz}

An immediate consequence is the following

\begin{satz}[Corollary (Burns - Wahl)]
 1) $\psi:\Def(S) \ra \Def(X)$ is a finite morphism, in 
particular, $\psi$ is surjective.

\noindent
2) If $\Def(X) \ra \mathcal{L}_X$ is not surjective (i.e., the 
singularities of $X$ cannot be
  smoothened independently by deformations of $X$),
  then $\Def(S)$ is singular.
\end{satz}

Assume now that we have $1 \neq G \leq \Aut(S) = \Aut(X)$.

Then we can consider the space of $G$-invariant local deformations of 
$S$, $\Def(S,G)$, resp. $\Def(X,G)$ of  $X$, and we have
a  natural map $\Def(S,G) \ra \Def(X,G)$.

We indeed show here that, unlike the case for  the corresponding 
morphism of   local deformation spaces,
this map needs not to be surjective;
and, as far as we know, the following result gives the first global 
example of such a  phenomenon.

\begin{theo}\label{path}
  The deformations of nodal Burniat surfaces with $K^2_S =4,3$ to 
extended  Burniat surfaces with $K^2_S =4,3$
yield examples where $\Def(S,(\ZZ/2\ZZ)^2) \ra \Def(X,(\ZZ/2\ZZ)^2)$ 
is not surjective.

Moreover, $\Def(S,(\ZZ/2\ZZ)^2) \subsetneq \Def(S)$, whereas for the 
canonical model we have: $\Def(X,(\ZZ/2\ZZ)^2) = \Def(X)$.

Set $G : = (\ZZ/2\ZZ)^2$.  Then the pairs $(S,G)$, where $S$ is the minimal model
of an  extended or nodal Burniat surface, and one gives an effective 
action of $G$ on $S$ (up to automorphisms of $G$) belong to two distinct deformation
types for  
 $K^2_S =4$ and  to four distinct deformation
types for  
 $K^2_S =3$.   

Instead the pairs $(X,G)$, where $X$ is the canonical model
of an  extended or nodal Burniat surface, and one gives an effective 
action of $G$ on $X$ (up to automorphisms of $G$) belong to only one  deformation
type for  
 $K^2_X =4$, and similarly   for  
 $K^2_X =3$.   
\end{theo}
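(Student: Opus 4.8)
The plan is to prove Theorem \ref{path} by separating it into its three assertions and treating the non-surjectivity statement via the Burns--Wahl machinery, while handling the enumeration of deformation types through the explicit bidouble-cover structure of (extended and nodal) Burniat surfaces established in the earlier sections.

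First I would address the claim $\Def(X,G)=\Def(X)$ for the canonical models. The nodal Burniat surfaces $X$ with $K^2=4$, resp. $3$, are bidouble covers $X\to Y$ of a normal Del Pezzo surface of degree $4$ with one node, resp. of a cubic surface with three nodes, and the group $G\cong(\ZZ/2\ZZ)^2$ is the Galois group of this cover. The point is that \emph{every} small deformation of $X$ remains such a bidouble cover: I would show this by verifying that the $G$-action is induced canonically (e.g. the eigensheaf decomposition of $p_*\Oh_X$ under $G$ deforms, the relevant obstruction space $H^2$ of the equivariant deformation functor coincides with the full $H^2$, or equivalently $H^1(\Theta_X)$ carries only the trivial $G$-isotypical summand beyond the invariant part). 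Concretely one computes the $G$-decomposition of $H^1(\Theta_X)$ and checks that all of it is $G$-invariant, which forces the forgetful map $\Def(X,G)\to\Def(X)$ to be an isomorphism of germs. The extended Burniat surfaces, being by construction a deformation of the nodal ones that preserves the bidouble structure over the same class of normal surfaces, give the same conclusion.

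Next, for the minimal models $S$, I would invoke the Burns--Wahl theorem quoted above. Here $X$ has $\nu$ nodes ($\nu=1$ for $K^2=4$, $\nu=3$ for $K^2=3$), each an $A_1$-singularity with Weyl group $W_i\cong\ZZ/2\ZZ$, so $\lambda:\CC^\nu\to\CC^\nu$ is the quotient by $W=(\ZZ/2\ZZ)^\nu$. Since $\Def(X)=\Def(X,G)$ is smooth and, by the Cartesian square, $\Def(S)$ is the fibre product $\Def(X)\times_{\mathcal L_X}\mathcal L_S$, the local moduli space $\Def(S)$ is smooth precisely when the composite $\Def(X)\to\mathcal L_X$ is surjective (the map $m=+\infty$ assertion from the introduction). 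The crux is then to compare the $G$-invariant loci: $\Def(S,G)$ sits inside $\Def(S)$ as the fibre product of $\Def(X,G)=\Def(X)$ with the $G$-\emph{invariant} part $\mathcal L_S^G$ of the nodal-deformation space. The non-surjectivity of $\Def(S,G)\to\Def(X,G)$ comes down to the fact that $G$ acts nontrivially on the set of the $\nu$ exceptional $(-2)$-curves (or on the individual smoothing parameters), so that $\mathcal L_S^G\to\mathcal L_X$ fails to be surjective even though $\mathcal L_S\to\mathcal L_X$ is: a $G$-invariant deformation of $S$ cannot smooth the nodes in the independent directions demanded by a general deformation of $X$. I expect this comparison of the $G$-action on $\mathcal L_S$ versus $\mathcal L_X$ to be the main obstacle, since it requires an explicit description of how $G$ permutes or acts on the exceptional curves and their smoothing coordinates.

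Finally, for the enumeration of deformation types of the pairs, I would use the classification of the possible $G$-actions arising from the bidouble-cover data. For the pairs $(X,G)$ the claim is that there is a single deformation type in each of $K^2=4,3$: this follows once one shows the space parametrizing the branch-divisor data (the building data of the bidouble cover, up to the relevant equivalences and automorphisms of $G$) is irreducible, which should already be contained in the irreducibility statements of Theorem \ref{main1}. For the pairs $(S,G)$ the count jumps to two types for $K^2=4$ and four for $K^2=3$: the discrepancy arises because passing from $X$ to $S$ one must also record, in a $G$-equivariant way, how the exceptional curves over the nodes are distributed among the $G$-orbits, and the number of inequivalent such equivariant configurations is $2$ when $\nu=1$ and $4$ when $\nu=3$. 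I would make this precise by analysing the action of $G=(\ZZ/2\ZZ)^2$ on the node set together with $\Aut(G)$-equivalence, counting the resulting orbits; the discrete invariant separating these types is exactly the obstruction preventing $\Def(S,G)\to\Def(X,G)$ from being surjective, which ties the enumeration back to the second part of the theorem.
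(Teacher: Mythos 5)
Your framework for the non-surjectivity statement (equivariant Burns--Wahl fibre product, comparison of $\mathcal{L}_S^G$ with $\mathcal{L}_X$) is reasonable in outline, but the step that carries the entire content of the theorem is exactly the one you defer as ``the main obstacle'': determining how $G$ acts on the smoothing parameters of the $(-2)$-curves. Moreover, of your two suggested mechanisms the first is vacuous: $G$ is the Galois group of $X \ra Y'$, each node of $X$ lies over a distinct node of $Y'$ (and $\nu=1$ when $K^2_S=4$), so $G$ fixes every node and every exceptional $(-2)$-curve; nothing is permuted. The true mechanism is the Atiyah flop phenomenon of Lemma \ref{nolift}: simultaneous resolution exists only after the base change $t=\tau^2$, and the elements $\sigma_1,\sigma_3\in G$ lift to the small resolution only after composition with $\tau\mapsto-\tau$, so the $G$-invariant locus of $\mathcal{L}_S$ is $\{\tau=0\}$. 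Without proving this (or a substitute) your argument is a restatement of the problem. Note also that the paper's actual proof avoids the local fibre-product route altogether: it identifies $\Def(S,G)$ with $\Def(S)\cap H^1(\Theta_S)^{\inv}$ (by (7.1) of \cite{cime88}), computes the character decomposition of $H^1(\Theta_S)$ in Proposition \ref{locmod} (e.g.\ $h^1(\Theta_S)^{\inv}=2<3=\dim\Def(S)$ for nodal surfaces with $K^2_S=4$), and observes that the families of Propositions \ref{famiglia4} and \ref{famiglia3} realize all of $\Def(X)$ by bidouble covers, which gives $\Def(X,G)=\Def(X)$; non-surjectivity then follows because the image of $\Def(S,G)$ is the nodal locus, a proper subgerm of $\Def(X)$. (Your side claim that $\Def(S)$ is smooth ``precisely when $\Def(X)\ra\mathcal{L}_X$ is surjective'' also needs surjectivity on tangent spaces, not merely surjectivity of germs.)

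The enumeration of deformation types of pairs is where your proposal is not merely incomplete but wrong. Since $G$ acts trivially on the set of nodes, counting ``equivariant configurations of exceptional curves among $G$-orbits'' modulo $\Aut(G)$ cannot produce $2$ types for $\nu=1$ and $4$ for $\nu=3$: there is exactly one such configuration in each case, so this invariant separates nothing. The actual source of the count is the stratification of the parameter space of Proposition \ref{famiglia3} (resp.\ \ref{famiglia4}) by the degeneration pattern of the branch conics: for $K^2_S=3$ the strata are all three conics $\Ga_j$ irreducible (strictly extended), exactly one degenerate, exactly two degenerate, and all three degenerate (nodal Burniat); for $K^2_S=4$ the strata are nodal and extended. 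Each stratum is connected, and they are pairwise distinct deformation types of pairs because, by Proposition \ref{locmod}, $H^1(\Theta_S)^{\inv}$ is in each case precisely the tangent space to the stratum through $S$ (dimensions $1,4,3,2$ resp.\ $2,3$), so a $G$-equivariant deformation can never leave its stratum. It is this eigenspace computation, not any combinatorics of the $G$-action on the nodes, that separates the types and simultaneously ties the count to the non-surjectivity statement.
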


The above phenomenon can  already be seen locally around a node, as 
it will be explained in section 2.
Our results show that the local pathology
does indeed globalize.

\medskip
Our paper is organized as follows: in section 1 we give the 
definition of {\em extended } Burniat surfaces and
describe the different  branch loci of the bidouble covers for nodal 
Burniat surfaces, respectively for   extended Burniat surfaces.

  In the second chapter we analyse bidouble covers of a nodal 
singularity, explaining the phenomenon of theorem \ref{path} locally.

In the third section we show that nodal Burniat surfaces with 
$K^2_S=4,3$ deform to extended Burniat surfaces with $K^2_S=4,3$.

Sections 4 and 5  are instead devoted to the calculation of $H^1(S, 
\Theta_S)$ for nodal and extended  Burniat surfaces,
and its eigenspaces for the  $G = (\ZZ/2\ZZ)^2$ action.

  In the course of doing this we amend a small mistake in 
\cite{burniat2}, lemma 2.10
  and actually generalize this lemma substantially
in order to make it appropriate  for  our present purposes and also 
applicable in other situations.

In the end we succeed to prove that the subset   $\sN \sE \sB_4$ of the  moduli
space of canonical
surfaces of general type $\mathfrak M^{can}_{1,4}$   corresponding to nodal and
extended  Burniat surfaces with $K^2_S = 4$
is an irreducible open set, normal, unirational
         of  dimension 3 (resp. the subset  $\sN \sE \sB_3$  of the  moduli
space of canonical
surfaces of general type $\mathfrak M^{can}_{1, 3}$   corresponding 
to nodal and
extended  Burniat surfaces with $K^2_S = 3$
is an irreducible open set, normal, unirational
         of  dimension 4).

Section 6  is dedicated to the study of one-parameter limits of 
extended Burniat surfaces with $K^2_S=4$,
showing that the subset  of the  moduli
space of canonical
surfaces of general type $\mathfrak M^{can}_{1,4}$   corresponding to
nodal and extended Burniat surfaces with $K^2_S = 4$ is closed.

In section 7 we give examples of other surfaces which lie in the 
closure of the family of extended Burniat surfaces with $K^2_S = 3$.

In the appendix we give an alternative proof of 3 of the 4 assertions 
of proposition \ref{dimensions},
by other methods which could be of independent interest.

\section{Definition of  extended and nodal Burniat surfaces}

Burniat surfaces are minimal surfaces of general type with $K^2
=6,5,4,3,2$ and $p_g = 0$, which were
constructed in \cite{burniat} as minimal resolutions of singular
bidouble covers (=Galois
covers with group $(\ZZ/2\ZZ)^2$) of the
projective plane branched on 9 lines.

We refer the reader to \cite{burniat2} for their construction, and we
shall adhere   to the notation
introduced there.

\medskip
\noindent Let $P_1, P_2, P_3 \in \PP^2$ be three non collinear points
(which we assume to be the points
$(1:0:0)$, $(0:1:0)$ and $(0:0:1)$), and let $P_4,  \dots , P_{3+m},
\  \ m=2, 3,$
be further (distinct)  points   not lying on the sides of the
triangle with vertices $P_1, P_2, P_3$.

Assume moreover that, for $m=2$, the points  $P_1, P_4, P_5$ are collinear,
while, for $m=3$, we shall moreover assume that also $P_2, P_4, P_6$
and $P_3, P_5, P_6$ are collinear (in particular, no four points are
collinear).

Let's denote by
$\tilde{Y}:=\hat{\PP}^2(P_1, P_2,  \dots , P_{3+m})$ the weak Del
Pezzo surface of
degree $6-m$,  obtained blowing  up  $\PP^2$ in the points $P_1, P_2,
\dots , P_{3+m}$.

Saying that $\tilde{Y}$ is a weak Del Pezzo surface means that the
anticanonical divisor $- K_{ \tilde{Y}}$
is nef and big;
        in our case it is not ample, because of the existence of
(-2)-curves, i.e. curves $N_i \cong \PP^1$,
with $ N_i \cdot K_{ \tilde{Y}} = 0$.

Contracting the (-2)-curves $N_i$ we obtain a normal singular Del Pezzo surface
$Y'$ with $- K_{ Y'}$ very ample.

We denote by $L$ the divisor on $\tilde{Y}$ which is the total
transform of a general line in $\PP^2$, by
$E_i$ the exceptional curve lying over $P_i$,   and by $D_{i,1} $ the
unique effective divisor in $ |L - E_i-
E_{i+1}|$, i.e., the proper transform of the line $y_{i-1} = 0$, side
of the triangle joining the points $P_i, P_{i+1}$.

For $m=2$ we have only one  (-2)-curve $N_1$, such that  $\{N_1 \} =
| L -E_1 - E_4 - E_5|$,
while for $m=3$ we also have the curves $N_2, N_3$ such that
        $\{N_2 \} = | L -E_2 - E_4 - E_6|$,
        $\{N_3 \} = | L -E_3 - E_5 - E_6|$.

        Therefore the anticanonical image of $\tilde{Y}$ is a normal surface
$Y' \subset \PP^{6-m}$
        of degree $6-m$, whose singularities are one node $\nu_1$ (an $A_1$
singularity) in the case $m=2$,
        and three nodes  $\nu_1, \nu_2, \nu_3$ in the case $m=3$ (the
(-2)-curve $N_i$ is the total transform of
        the point $\nu_i$).

        \begin{defin}\label{df}
        1) Define the {\em Burniat divisors for $m=2$} as follows:
        $$
        D_1 \in |L-E_1| + |L-E_1 - E_2| + |L-E_1-E_4-E_5| +E_3,
        $$
        i.e., $D_1 = D_{1,1} + N_1+ C_1$, where $C_1 \in  |L-E_1|$ is
assumed to be irreducible,
        whereas $D_2$, $D_3$ are divisors such that
        $$
        \{D_2\} = |L-E_2-E_3| + |L-E_2 - E_4| + |L-E_2-E_5| +E_1,
        $$
        $$
        \{D_3\} = |L-E_3-E_1| + |L-E_3 - E_4| + |L-E_3-E_5| +E_2.
        $$
\noindent
2) The {\em Burniat divisors for $m=3$} are defined to be the
divisors $D_1, D_2, D_3$ such that:
        $$
        \{D_1\}= |L-E_1-E_2| +  |L-E_1-E_4-E_5| + |L-E_1 - E_6| +E_3,
        $$
        $$
        \{D_2\} = |L-E_2-E_3| + |L-E_2 - E_4-E_6| + |L-E_2-E_5| +E_1,
        $$
        $$
        \{D_3\} = |L-E_3-E_1| + |L-E_3 - E_5 -E_6| + |L-E_3-E_4| +E_2.
        $$
        \noindent
        3) The {\em extended Burniat divisors for $m=2$} are given as follows:
        $$\Delta_1 \in |L-E_1| + |L-E_1-E_2| + E_3,
        $$
        $$
        \Delta_2 \in |L-E_2 - E_4| + |L-E_2-E_5| +|2L -E_2 -E_3-E_4-E_5|,
        $$
where we assume the divisor $\Ga_2 \in |2L -E_2 -E_3-E_4-E_5|$ to be
irreducible;
        and $\Delta_3$ is the divisor such that
        $$\{\Delta_3\} =  |L-E_3-E_1|  + |L-E_3 - E_4| + |L-E_3-E_5| +
|L-E_1-E_4-E_5|+E_2.
        $$
        \noindent
        4) The {\em strictly extended Burniat divisors for $m=3$} are
defined as follows:
        $$
        \Delta_1 \in |L-E_1 - E_6|+|2L - E_1 -E_2-E_5-E_6| + |L-E_2 - E_4-E_6|,
        $$
        $$
        \Delta_2 \in |L-E_2 - E_5|+|2L - E_2 -E_3-E_4-E_5| + |L-E_3 - E_5-E_6|,
        $$
        $$
        \Delta_3 \in |L-E_3 - E_4|+|2L - E_1 -E_3-E_4-E_6| + |L-E_1 - E_4-E_5|.
        $$
We make the similar assumption, for each $\Delta_i$,
        that the strict transform of the  conic passing through four of
the five points is irreducible (e.g., we require
the irreducibility of
$
\Ga_1
\in |2L - E_1 -E_2-E_5-E_6|
$).

        \end{defin}

\begin{rem}\label{difference}

1) Observe that $(D_1 + D_2 + D_3) \in | - 3 K_{\tilde{Y}}|$ is a
reduced normal crossing divisor.

2)  Similarly,  $( \De_1 + \De_2 + \De_3 ) \in | - 3 K_{\tilde{Y}} +
\sum N_i |$
        is a reduced normal crossing divisor.

3-2) On the normal Del Pezzo surface $Y'$, for $m=2$,

\begin{itemize}
\item
$D_1$ yields a conic plus two lines, the same does $\De_1$, and
indeed $D_1 =  \De_1 + N_1$
\item
$D_2$ yields four lines,  $\De_2$ yields a conic plus two lines, and
indeed $\De_2 \equiv D_2 + N_1$
\item
$D_3$ yields four lines, the same does $\De_3$, and indeed $ \De_3 = D_3 + N_1$
\end{itemize}
In particular, if  the conic corresponding to  $\De_2$ specializes to
contain the line corresponding to $E_1$,
        we obtain then $D_2$ subtracting the divisor $N_1 \equiv L-E_1
- E_4-E_5$.

Finally, the four lines of $\De_3$ divide into two groups, i.e.,
we can write  $\De_3 =\De_{3,1} + \De_{3,2} + N_1$ so
that, setting $ \Ga_1 : = C_1$ and writing $\De_i = \Ga_i +  \De'_i 
$, for $i=1,2$, then

        $$ (*) : \\   \De'_i +  \De_{3,i} \equiv -  K_{\tilde{Y}}$$
        $$ (**) : \\   \Ga_1+  \Ga_2 \equiv -  K_{\tilde{Y}}.$$

3-3) On the normal Del Pezzo surface $Y'$, for $m=3$,

        $\De_j$ yields a conic and one line, $D_j$ yields three lines,
        and indeed $  \De_j  \equiv  D_j  - N_j +  N_{j-1} + N_{j+1}$.

In particular, if the conic corresponding to  $\De_j$ specializes to
contain the line corresponding to $E_{j-1}$ (here $j \in \ZZ/3 \ZZ$),
        we obtain $D_2$ subtracting the divisor $ N_{j-1} + N_{j+1}$ and
adding the divisor $N_j$.

4) The divisors $D_i$ enjoy the property (cf. \cite{burniat2}) that
there are divisor classes  $L_i$ such that
$D_{i-1} + D_{i+1} \equiv   2
L_i$.

Hence, in particular, $\De_{i-1} + \De_{i+1} \equiv   2
\Lambda_i$, where, for $m=3$, $ \Lambda_i : = L_i + N_i$. Instead, for
$m=2$, this formula holds only
for $i=1$, and  we set $ \Lambda_j : = L_j $ for $j=2,3$.

5-2) Assume now $m=2$, and that the conic corresponding to $\Ga_2$
becomes reducible:
if the conic passes through $P_1$, then necessarily $\Ga_2$ splits as
$N_1 + E_1 + |L- E_2 - E_3|$, hence the conic is the union of two lines.
If the conic is the union of two lines in another fashion,
then necessarily either $|L- E_2 - E_5|$ or $|L- E_2 - E_4|$ is a  component of
       $\Ga_2$, hence $\De_2$ is not reduced.

5-3) Assume $m=3$ and that one or more of these conics become reducible.
E.g., assume that the conic corresponding to $\Ga_2$ becomes reducible,
and observe that this will be the case if the conic passes through
$P_1$ or $P_6$.
We disregard this degeneration if the corresponding divisor $\De_2$
will be non reduced.
The only possibility left over is that  $\Ga_2$ splits as before,
$N_1 + E_1 + |L- E_2 - E_3|$.
This degeneration will be considered admissible.

\begin{defin}\label{ext}
Assume $m=3$ and that one or more of these conics $\Ga_j$ become reducible
in the admissible way $\Ga_j = N_{j-1} + E_{j-1}  + |L- E_j - E_{j+1}|$
       (here, as usual, $j \in \ZZ/3 \ZZ$).

In this case we define the extended Burniat divisors by subtracting to
$\Ga_j$ the nodal divisor $ N_{j-1}$ it contains, by subtracting again
       the nodal divisor $ N_{j-1}$ to $\De_{j+1}$ and adding it to $\De_{j-1}$.
\end{defin}

\end{rem}

We can now consider (cf. \cite{ms}, \cite{sbc}) the associated
bidouble covers  $S \ra \tilde{Y}$ with branching divisors the  Burniat
divisors, respectively the extended Burniat divisors.

\begin{defin}
          A  {\em secondary nodal Burniat surface} is obtained, for $m=2$,
          as  a bidouble cover  $S \ra \tilde{Y}$ with branch divisors the
three Burniat divisors.

           In the case $m=3$ we obtain  a {\em tertiary nodal Burniat
surface} $S$.

        $S$ is then a minimal surface of general type with $p_g(S) =q(S)=
0$,  $K_S^2 = 6-m$
(cf. \cite{burniat2}).

          If we let the three branch divisors be extended Burniat divisors,
then we obtain
a non minimal surface $S'$ whose minimal model $S$ is called
        a {\em secondary extended  Burniat surface},  respectively
        a  {\em tertiary extended   Burniat surface}.

\end{defin}

\begin{rem} 1) In the nodal Burniat case the surface $S$ does not
have an ample canonical divisor $K_S$,
due to the existence of (-2)-curves, which are exactly the inverse images of
the  (-2)-curves $N_i \subset \tilde{Y}$.

For this reason we call the above Burniat surfaces
        of  {\em nodal type}. We denote their canonical model by $X$, and
observe that
$X$ is a finite bidouble cover of the normal Del Pezzo surface $Y'$.

For $m=2$ $X$  has precisely one node (an
$A_1$-singularity, corresponding to the contraction of the
(-2)-curve) as singularity. While, for $m=3$, $X$ has exactly three nodes as
singularities.

2) In the extended  Burniat case $S'$ is not minimal.
In the strictly extended   Burniat case the
inverse image of each $N_i$
       splits as the union of two disjoint (-1)-curves.
In this latter case $S$ has ample canonical divisor, hence $S=X$.

3) In all cases, the morphism $ X \ra Y'$ is exactly the bicanonical
map of $X$ (see \cite{burniat2}).

4) Nodal Burniat surfaces are parametrized by a family with smooth
base of dimension $2$ for $m=2$,
of dimension $1$ for $m=3$.

        Strictly extended  Burniat surfaces are parametrized by a
family with
smooth base of dimension $3$ for $m=2$,
of dimension $4$ for $m=3$.
\end{rem}

The key feature is that, both for nodal Burniat surfaces, and for
extended  Burniat surfaces,
the canonical model $X$ is a finite bidouble cover of a singular Del
Pezzo surface $Y'$,
which has one node in the case $m=2$, and three nodes for $m=3$ (in
the latter  case $Y'$ is a cubic surface in $\PP^3$).

In both cases the direct image $p_* (\hol_X)$ splits as a direct sum
of four reflexive character sheaves
of generic rank $1$.

In the next section we shall describe how the covering behaves in the
neighbourhood of a node
in the two respective cases, and how these local coverings deform to
each other (the Burniat case deforms
to the extended Burniat case).

\section{Local calculations around the nodes}\label{locdefnode}

In this section we consider finite bidouble covers of a node
of Du Val type, i.e., yielding
singularities which are at worst  RDP 's (rational double points).

We obtain a classification which is a subset of the one made
   in \cite{autRDP}, classifying quotients of RDP's by actions of
$\ZZ /  2 \ZZ$ or of $G = (\ZZ /  2 \ZZ)^2$.

We only need to look at Table 2, page 90, and Table 3, page 93, ibidem,
to see which  quotients of a rational double
point by an involution,
or by a pair of commuting involutions,  yield an $A_1$-singularity, 
i.e., a node.

There are six cases for such coverings of Du Val type of a node $Y$,
which in local
holomorphic coordinates
is given by
$$ xy - z^2 = 0. $$
In order to be more informative in our description, we denote by
$\tilde{Y}$ the resolution of $Y$,
which is the total space of a line bundle on $N \cong \PP^1$ of
degree  $-2$ (hence $N^2 = -2$).
Denoting the bidouble cover of $Y$ by $X$, we shall obtain, through
the normalization of the fibre
product, a finite bidouble cover of $\tilde{Y}$, for which we shall
give the three corresponding
branch divisors.

In the case where $X$ is not irreducible, we shall describe a
connected component $X'$ of $X$.

\begin{enumerate}
\item
$X'$ = $Y$ (the covering is \'etale).
\item
$X' = \CC^2$, $X$ has two components and the covering morphism is given by
$$  (u,v) \mapsto (x=u^2, y = v^2, z = uv).$$
The branch divisor  on $\tilde{Y}$ is just the (-2)-curve $N$.
\item
$X' = \{ w^4 = xy \}$, $X$ has two components and the covering
morphism is given by
$$  (x,y,w) \mapsto (x, y , z = w^2).$$
The branch divisor  on $\tilde{Y}$ consists of the (-2)-curve $N$
plus two fibres; the double cover
of $\tilde{Y}$ has two nodes and resolving them we get the minimal
resolution of the
$A_3$ singularity $X'$.
\item
$X = \{ w^2 = uv \}$ and the covering morphism is given by
$$  (u,v) \mapsto (x=u^2, y = v^2, z = w^2).$$
The three intermediate $\ZZ/2 \ZZ$ covers are the two double covers (2), (3)
described above,
plus the intermediate cover (here $ a : = uw, b : = vw$)
$$\{  (x,y,z,a,b)| Rank \begin{pmatrix}
x , a , z , b \\
a , z, b , y
\end{pmatrix} =  1 \},$$
which is the cone over a rational normal quartic
(set $x = t_0^4, a =t_0^3t_1, z =t_0^2 t_1^2, z =t_0 t_1^3,  z = t_1^4$).

The branch divisors  on $\tilde{Y}$ are two: the (-2)-curve $N$ and the divisor
$D$ formed by two fibres. The three intermediate
double covers depend on the choice of the branch locus: $N$,
respectively $N+D$,
respectively $D$.

\item
$X' =  \{ z^2 = (w^2 +y^{k+1}) \cdot  y  \}$, $X$ has two components
having a singularity of type $D_{k+3}$, and the covering
morphism is given by
$$  (y,z, w) \mapsto (x= w^2 +y^{k+1} , y , z ).$$
The branch divisor on $\tilde{Y}$ is the total transform of the divisor
$C : = \{ x = y^{k+1}, z^2= y^{k+2}\}$ which is irreducible with a 
cusp for $k$ odd,
else it is reducible with a $\frac{k}{2}$-tacnode for $k$ even.

In particular, $N$ is part of the branch locus.

\item
$X = \{ w^2 = (u - v^{k+1}) ( u + v^{k+1}) \} = \{ w^2 = u^2  - v^{2k+2}  \}$
   and the covering morphism is given by
$$  (u,v, w) \mapsto (x=u^2, y = v^2, z = uv).$$
$X$ is a singularity of type $A_{2k+1}$ and, in order to treat a new case,
  we make the assumption $ k \geq 1$.

The three intermediate $\ZZ/2 \ZZ$ covers are the smooth double cover 
(2), the double cover (5)
$ \{ w^2 = x   - y^{k+1}  \}$,
  and a third singularity which we omit to describe.

The branch divisors  on $\tilde{Y}$ are two: the (-2)-curve $N$ and the
the total transform of the divisor   $C$ above.

The three intermediate  covers depend on the choice of the branch locus: $N$,
or  $N+C'$,
or  $C'$, where $C'$ is the strict transform of $C$.

\end{enumerate}

\bigskip

Letting $p : X \ra Y$ be the finite bidouble cover, the direct image
sheaf $p_* \hol_X$
splits as $$\hol_Y \bigoplus ( \oplus_{i=1,2,3} \sL_i),$$ where in
the first case the
reflexive sheaves $ \sL_i$ are locally free.

To describe the other cases we use the reflexive sheaf $\sF$
generated by $u,v$ as
$\hol_Y $-module, with relations
$$y u - z v = 0, z u - xv = 0.  $$
We get

\begin{enumerate}
\setcounter{enumi}{1}
\item $X' = \CC^2$,
$  (u,v) \mapsto (x=u^2, y = v^2, z = uv),$
$$p_* \hol_X = (\hol_Y \oplus \sF)^{\oplus 2}$$
\item
$X' = \{ w^4 = xy \}$,
$  (x,y,w) \mapsto (x, y , z = w^2)$
$$p_* \hol_X = (\hol_Y \oplus \hol_Y )^{\oplus 2}$$
\item
$X = \{ w^2 = uv \}$ ,
$  (u,v) \mapsto (x=u^2, y = v^2, z = w^2)$
$$p_* \hol_X = (\hol_Y \oplus \sF)^{\oplus 2},$$
with generators $ 1, \{u,v\}, w, \{a= wu, b = vw\}$.
\item
$X' = \{ w^2 = x   - y^{k+1} \}$,
$  (y,z, w) \mapsto (x= w^2 +y^{k+1} , y , z ),$
$$p_* \hol_X = (\hol_Y \oplus \hol_Y )^{\oplus 2}.$$
\item
$X = \{  w^2 = u^2  - v^{2k+2}  \}$,
$  (u,v,w) \mapsto (x=u^2, y = v^2, z = uv)$
$$p_* \hol_X = (\hol_Y \oplus \sF)^{\oplus 2}.$$
\end{enumerate}

\begin{rem}
Cases 1, 3 and 5 are the case where we have a flat bidouble cover, i.e.,
$p_* \hol_X$
is locally free. In cases 2, 4 and 6 we have non-flat
        bidouble covers, but with the same character sheaves. We shall
soon show how
case 4 (resp. : case 6)) deforms to case 2.
\end{rem}

\begin{prop}
In case 2) $ X= Spec ( (\hol_Y \oplus \sF) \oplus  (\hol_Y \oplus \sF))$,
where the two addenda are orthogonal, and the algebra structure is
determined by the nondegenerate pairing $ \sF \times \sF \ra \hol_Y$.

In case 4)  $ X= Spec ( (\hol_Y \oplus \sF) \oplus  w(\hol_Y \oplus \sF))$,
and the algebra structure is
determined by the nondegenerate pairing $ \sF \times \sF \ra \hol_Y$,
together with the assignment $ w^2 = z$.

In case 6)  $ X= Spec ( (\hol_Y \oplus \sF) \oplus  w(\hol_Y \oplus \sF))$,
and the algebra structure is
determined by the nondegenerate pairing $ \sF \times \sF \ra \hol_Y$,
together with the assignment $ w^2 = x - y^{k+1}$.
\end{prop}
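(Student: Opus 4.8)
The plan is to take the underlying $\hol_Y$-module decompositions of $p_*\hol_X$ as already recorded in the list above and to pin down only the multiplicative structure; once the algebra is identified, the assertions $X = \mathrm{Spec}\,(p_*\hol_X)$ are immediate because $p$ is finite. The first step is to make the pairing explicit: I would define the symmetric $\hol_Y$-bilinear map $\beta\colon \sF \times \sF \ra \hol_Y$ on the generators $u,v$ of $\sF$ by $\beta(u,u)=x$, $\beta(u,v)=\beta(v,u)=z$, $\beta(v,v)=y$, and check that it is well defined, i.e. compatible with the defining relations $yu-zv=0$ and $zu-xv=0$; this reduces to the single identity $xy-z^2=0$ on $Y$. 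I would then verify that $\beta$ is nondegenerate, in the sense that it induces an isomorphism $\sF \ra \Hom_{\hol_Y}(\sF,\hol_Y)$; this reflects the fact that $\sF$ is the reflexive sheaf attached to the nontrivial class of $\mathrm{Cl}(Y)\cong\ZZ/2\ZZ$, so that $\sF^{[2]}\cong\hol_Y$.

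For case 2) I would use that $X$ is the disjoint union of two copies $X',X''$ of $\CC^2=\mathrm{Spec}\,\CC[u,v]$, each mapping to $Y$ by $x=u^2,\ y=v^2,\ z=uv$. Hence $p_*\hol_X$ is the product ring $p'_*\hol_{X'}\times p''_*\hol_{X''}$, which gives at once the orthogonality of the two summands. On a single component, decomposing $\CC[u,v]$ into eigenspaces under $(u,v)\mapsto(-u,-v)$ identifies the invariant part with $\hol_Y$ and the anti-invariant part with the $\hol_Y$-submodule generated by $u,v$, which is exactly $\sF$; the product $\sF\cdot\sF\ra\hol_Y$ is then read off from $u^2=x$, $uv=z$, $v^2=y$, i.e. it is precisely $\beta$. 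This proves 2).

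For cases 4) and 6), where $X$ is irreducible, I would take the generators $1,u,v,w,wu,wv$ of $p_*\hol_X$ over $\hol_Y$ exhibited above and group them as $(\hol_Y\oplus\sF)\oplus w(\hol_Y\oplus\sF)$, the first summand spanned by $1,u,v$ and the second by $w,wu,wv$. On the first summand the product $\sF\times\sF\ra\hol_Y$ is again $\beta$, computed from $u^2=x$, $v^2=y$ and $uv=z$; for case 4) one uses $uv=w^2=z$, and for case 6) one has $uv=z$ directly. It then remains to record that $w$ interchanges the two summands, sending $\hol_Y\oplus\sF$ into $w(\hol_Y\oplus\sF)$ and back via $w\cdot(ws)=w^2 s$, so the whole structure is determined by $\beta$ together with the value of $w^2$. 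The equation of $X$ gives $w^2=uv=z$ in case 4) and $w^2=u^2-v^{2k+2}=x-y^{k+1}$ in case 6), which are exactly the asserted relations.

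The multiplication-table bookkeeping is routine; the step that needs genuine care is the nondegeneracy of $\beta$ and, in the non-flat cases 4) and 6), the verification that $\mathrm{Spec}$ of the \emph{reflexive} algebra — rather than the naive fibre product — recovers $X$, together with the claim that $\beta$ and the single relation $w^2=z$ (resp. $w^2=x-y^{k+1}$) generate all relations with no generators beyond $1,u,v,w,wu,wv$. I expect the main obstacle to be precisely this control of the reflexive hull: one must confirm that passing to the bidouble cover through normalization of the fibre product introduces no relations beyond those dictated by $\beta$ and the value of $w^2$.
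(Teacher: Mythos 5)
Your argument is correct, and it supplies exactly the verification the paper omits (the authors write only ``We omit the simple proof''): one decomposes the explicit coordinate rings of cases 2), 4), 6) into character eigenspaces for the $(\ZZ/2\ZZ)^2$-action, identifies the part odd in $(u,v)$ with $\sF$, and reads off the algebra structure from $u^2=x$, $uv=z$, $v^2=y$ together with $w^2=z$, resp.\ $w^2=x-y^{k+1}$. The reflexive-hull worry you raise at the end is in fact already disposed of by your own computation: in each case $X$ is given as an explicit normal variety (two disjoint planes, resp.\ a hypersurface with an isolated singularity), so $p_*\hol_X$ is computed directly from its coordinate ring via the listed generators $1,u,v,w,wu,wv$, and no normalization step can introduce relations or generators beyond those dictated by the pairing and the value of $w^2$.
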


We omit the simple proof.

Case 4) deforms now to case 2) by changing the assignment  $ w^2 = z$
to  $ w^2 = z + t$, $t\neq 0$, so that $w$ becomes then a local unit
at the origin. Similarly case 6) deforms to case 2).

We can relate the resulting picture with the local semiuniversal deformation of
a node.

\begin{prop}\label{def-act}
Let $t \in \CC$ , and consider the action of $G: = (\ZZ/2 \ZZ)^2$ on $\CC^3$
generated by $\sigma_1(u,v,w) = (u,v,-w)$, $\sigma_2(u,v,w) = (-u,-v,w)$.
Then the hypersurfaces $X_t = \{ (u,v,w)| w^2 = uv + t\}$ are $G$-invariant,
and the quotient $X_t / G $ is the hypersurface
$$ Y_t = Y_0 =  \{ (x,y,z)| z^2 = xy\} ,$$
which has a nodal singularity at the point $x=y=z=0$.

$X_t \ra Y_t$ is a bidouble covering of type 2 for $t\neq 0$, and of type 4
for $t=0$. We get in this way a flat family of (non flat) bidouble covers.

\end{prop}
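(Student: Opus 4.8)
The plan is to proceed by a direct invariant-theoretic computation, then a local analysis near the node to pin down the type of each cover, and finally a flatness check for the total family.

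First I would verify $G$-invariance: since $\sigma_1$ only changes the sign of $w$ while $\sigma_2$ changes the signs of $u$ and $v$ simultaneously, both fix the polynomial $w^2-uv-t$, so each $X_t$ is preserved by $G$. Next I would compute the invariant ring. Writing $A_t = \CC[u,v,w]/(w^2-uv-t)$, I would first determine $\CC[u,v,w]^G$: a monomial $u^a v^b w^c$ is $G$-invariant exactly when $c$ and $a+b$ are both even, so $\CC[u,v,w]^G = \CC[x,y,z,s]/(z^2-xy)$ with $x=u^2,\ y=v^2,\ z=uv,\ s=w^2$. Because $G$ is finite and we work over $\CC$, the Reynolds operator keeps invariants exact, giving $A_t^G = \CC[u,v,w]^G/(f)$ for the invariant generator $f = w^2-uv-t = s-z-t$; eliminating $s=z+t$ yields $A_t^G = \CC[x,y,z]/(z^2-xy)$, which is independent of $t$. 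Hence $X_t/G = Y_0$ for every $t$, with quotient map $(u,v,w)\mapsto (u^2,v^2,uv)$, and $Y_0$ has its node at the origin.

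Then I would identify the covers. For $t=0$ we have literally $X_0 = \{w^2=uv\}$, and on it $z=uv=w^2$, so the quotient map is $(u,v,w)\mapsto (u^2,v^2,w^2)$: this is exactly case 4), with $X_0$ irreducible (a quadric cone). For $t\neq 0$ the decomposition $A_t = (\hol_Y\oplus\sF)\oplus w(\hol_Y\oplus\sF)$ with $w^2=z+t$ is the same as in case 4), except that at the node $z=0$ one has $w^2=t\neq 0$, so $w$ is a local unit; the germ of $X_t$ over the node then splits into the two smooth sheets $w=\pm\sqrt{uv+t}$, each carrying $(u,v)$ as coordinates and mapping to $Y_0$ by $(u,v)\mapsto (u^2,v^2,uv)$, which is precisely case 2). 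I would stress that the apparent tension --- $X_t$ is a globally connected smooth affine quadric, yet case 2) speaks of two components --- is resolved by recalling that the classification is local about the node: the two points $(0,0,\pm\sqrt{t})$ lying over the node yield two disjoint smooth germs.

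Finally, for the flatness of the family I would pass to the total space $\mathcal{X} = \{w^2=uv+t\}\subset\CC^4$, which via $t=w^2-uv$ is isomorphic to $\CC^3$ with coordinates $u,v,w$; the projection to the $t$-line is the function $w^2-uv$, and $\CC[u,v,w]$ is a torsion-free, hence flat, module over $\CC[t]=\CC[w^2-uv]$, so $\mathcal{X}\to\CC$ is flat with fibres $X_t$. On the other hand each individual cover $X_t\to Y_0$ is non-flat, since $p_*\hol_{X_t}=(\hol_Y\oplus\sF)^{\oplus 2}$ and the reflexive sheaf $\sF$ fails to be locally free at the node; this matches the assertion that the members are (non-flat) bidouble covers of types 2) and 4). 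I expect the only genuinely delicate point to be the type-2 identification at $t\neq 0$: one must argue locally (analytically) about the germ over the node rather than globally, and check that the resulting two-sheeted germ is exactly the morphism $(u,v)\mapsto (u^2,v^2,uv)$ of case 2); the invariant computation and the flatness are routine by comparison.
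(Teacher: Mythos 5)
Your proof is correct and takes essentially the same approach as the paper: both rest on computing the invariants $u^2, v^2, uv, w^2$ to identify the quotient with $\{z^2 = xy\}$, and both pin down the cover types by the observation that $w$ becomes a local unit over the node when $t \neq 0$ (which is exactly how the paper, in the discussion preceding the proposition, explains that case 4 deforms to case 2). The remaining differences are purely expository: you argue fiber by fiber via the Reynolds operator and verify flatness by realizing the total space as $\CC^3$, torsion-free over the PID $\CC[t]$, whereas the paper does the invariant computation once on $\CC^3 \times \CC$ and reads off both the quotient and the flatness of the family from its realization as the preimage of the hypersurfaces $s = z + t$ in $Y' \times \CC^2$.
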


\begin{proof}
The invariants for the action of  $G$ on $\CC^3 \times \CC$ are:
$$ x: =u^2, y:  = v^2, z : = uv , s: = w^2, t.$$

Hence the family $\XX$  of the hypersurfaces $X_t$ is the inverse image of the
family of hypersurfaces $ s = z +t$ on the product
$$Y' \times \CC^2 = \{x,y,z,s,t)| xy = z^2 \} .$$
Hence the quotient of $X_t$ is isomorphic to $Y'$.

The rest was already explained before.
\end{proof}

\begin{rem}
i) The simplest way to view $X_t$ is to  see $\CC^2$ as a double cover
of $Y'$ branched only at the origin,
and then $X_t$ as a family of double covers of $\CC^2$
branched on the curve $ uv + t = 0$, which acquires a double point for $t=0$.

ii)
The involution $\sigma_3(u,v,w) = (-u,-v,-w)$ has only the origin as
fixed point,
which lies on $X_0$. Whereas $\sigma_3$ acts freely on $X_t$, for $t \neq 0$.

$ Fix (\sigma_1) = \{ w = 0\}$, and $ \{ w = 0\} \cap X_t = \{ uv + t = w=0\}$.

\noindent
Finally, $ Fix (\sigma_2) = \{ u=v = 0\}$, and $ \{  u=v = 0\} \cap
X_t = \{ u=v = 0,w^2=  t \}$,
which consists of two points for $t \neq 0$, one for $t=0$.

The corresponding branch loci are the origin, for $t=0$,
the divisor $ z=0$, and the point $ x=y= z-t = 0$.

iii) If we pull back the bidouble cover $X_t$ to $\tilde{Y}$, and we
normalize it,
we can see that
\begin{itemize}
\item
$D_3$ is, for $t=0$, the nodal curve $N$, and is the empty divisor
for $t\neq 0$;
\item
$D_1$ is, for $t \neq 0$, the inverse image of the curve $z + t = 0$;
while, for
$t=0$, it is only its strict transform, i.e. the divisor $D$
considered previously, made up of
two fibres;
\item
$D_2$ is an empty divisor for $t=0$, and the nodal curve $N$
for $t\neq 0$.

\end{itemize}

\end{rem}
\begin{rem}
Part iii) of the previous remark shows that, as $t \ra 0$, one subtracts
the nodal divisor $N$ to $D_2$, and adds it to $D_3$; while for $D_1$,
it specializes to $ D + N$, and then we subtract $N$.

This is precisely the algorithm which applies when passing from
extended Burniat
to Burniat divisors.
\end{rem}

The really interesting part of the story comes now: the family $X_t$ admits
a simultaneous resolution only after that we perform a base change
$$ t = \tau^2 \Rightarrow  w^2 - \tau^2 = uv.$$

\begin{defin}
Let $\XX \ra T'$ be the family where $$\XX = \{ (u,v,w,\tau ) |  w^2
- \tau^2 = uv \}$$
and $T'$ is the affine line with coordinate $\tau$.

Define $\sS \subset \XX \times \PP^1$ to be one of the small
resolutions of $\XX$,
and $\sS'$ to be the other one, namely:

$$\sS : \{ (u,v,w,\tau)(\xi) \in  \XX \times \PP^1| \
\frac{w-\tau}{u} =   \frac{v}{w+\tau} = \xi \}$$
$$\sS' : \{ (u,v,w,\tau)(\eta) \in  \XX \times \PP^1| \
\frac{w+\tau}{u} =   \frac{v}{w-\tau} = \eta \}.$$

Let $G$ be the group $G \cong (\ZZ/2 \ZZ)^2$ acting on $\XX$ trivially on
the variable
$\tau$, and else as in proposition \ref{def-act}.
Let further $\sigma_4$ act by $\sigma_4(u,v,w,\tau) = (u,v,w,- \tau) $,
       let $G' \cong (\ZZ/2 \ZZ)^3$ be the group generated by $G$ and $\sigma_4$,
       and let $H \cong (\ZZ/2 \ZZ)^2$ be the subgroup $\{ Id, \sigma_2,
\sigma_1\sigma_4 , \sigma_3\sigma_4\}$.
\end{defin}

The following is a rephrasing and a generalization of a discovery of
Atiyah  in our context:
we omit the simple proof.

\begin{lemma}\label{nolift}
The biregular action of $G'$ on $\XX$ lifts only to a birational
action on $\sS$,
respectively $\sS'$. The subgroup $H$ acts on $\sS$, respectively $\sS'$,
as a group of biregular automorphisms.

The elements of
$ G' \setminus H = \{ \sigma_1, \sigma_3, \sigma_4 , \sigma_2\sigma_4\}$
yield isomorphisms between $\sS$ and  $\sS'$.

The group $G$ acts on the punctured family $\sS \setminus \sS_0$,
in particular it acts on each fibre $\sS_{\tau}$.

Since $\sigma_4$ acts trivially on $\sS_0$,
the group $G'$ acts on $\sS_0$ through its direct summand $G$.

The biregular actions of $G$ on $\sS \setminus \sS_0$ and on $ \sS_0$
do not patch together to a biregular action on $\sS$, in particular
$\sigma_1$ and $\sigma_3$ yield birational maps which are
not biregular: they are called Atiyah flops (cf. \cite{atiyah}).
\end{lemma}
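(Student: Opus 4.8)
The plan is to put $\XX$ into standard conifold form and reduce all the biregularity assertions to a sign computation, isolating the one genuinely geometric point — the failure of patching — for the end. First I would set $a:=w-\tau$ and $b:=w+\tau$, turning the equation $w^2-\tau^2=uv$ into $ab=uv$; this exhibits $\XX$ as the standard conifold with an isolated ordinary double point at the origin and smooth elsewhere (immediate from the Jacobian), and the two small resolutions become the two rulings
\[ \sS:\ \tfrac{a}{u}=\tfrac{v}{b}=\xi, \qquad \sS':\ \tfrac{b}{u}=\tfrac{v}{a}=\eta . \]
In these coordinates the generators act linearly: $\sigma_1:(a,b,u,v)\mapsto(-b,-a,u,v)$, $\sigma_2:(a,b,u,v)\mapsto(a,b,-u,-v)$, $\sigma_3:(a,b,u,v)\mapsto(-b,-a,-u,-v)$, and $\sigma_4:(a,b,u,v)\mapsto(b,a,u,v)$.

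To get the assertions about $H$ and about the isomorphisms $\sS\to\sS'$, I would simply substitute each $g\in G'$ into the defining ratio $\tfrac{a}{u}=\tfrac{v}{b}$ of $\sS$ and read off whether the transformed ratio is again that of $\sS$ or instead that of $\sS'$. One checks that the four elements of $H=\{\mathrm{Id},\sigma_2,\sigma_1\sigma_4,\sigma_3\sigma_4\}$ return the ratio of $\sS$ (so they act biregularly on $\sS$, and symmetrically on $\sS'$), while the four elements of $G'\setminus H=\{\sigma_1,\sigma_3,\sigma_4,\sigma_2\sigma_4\}$ return the ratio of $\sS'$, hence give biregular isomorphisms $\sS\to\sS'$. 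This proves the second and third assertions, and the first is then immediate: viewed as a \emph{self}-map of $\sS$, an element of $G'\setminus H$ must be postcomposed with the flopping map $\sS'\dashrightarrow\sS$, so the full group $G'$ only acts birationally on $\sS$.

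For the fibrewise statements I would use that $\sigma_1,\sigma_2,\sigma_3$ fix $\tau$ while $\XX$ is smooth away from the origin: since $\sS\to\XX$ is an isomorphism over $\XX\setminus\{0\}$ and the exceptional curve $E$ lies over the origin (which sits in the central fibre), one has $\sS\setminus\sS_0\cong\XX\setminus\XX_0$, and $G$ acts there preserving every fibre $\sS_\tau$. At $\tau=0$ both defining ratios degenerate to the single ratio $\tfrac{w}{u}=\tfrac{v}{w}$ of the resolution of the surface node $\{w^2=uv\}$, so $\sS_0=\sS_0'$, and $\sigma_4$, acting only by $\tau\mapsto-\tau$, restricts to the identity on $\sS_0$; hence $G'$ acts on $\sS_0$ through $G=G'/\langle\sigma_4\rangle$.

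The main obstacle is the last assertion, which I would settle by an explicit computation in the chart of $\sS$ meeting $E$. Using coordinates $(\xi,u,b)$ there (so $a=\xi u$, $v=\xi b$, with $E=\{u=b=0\}$ and $\sS_0$ the surface $\{b=\xi u\}\ni E$), the birational self-map induced by $\sigma_1$ works out to
\[ (\xi,u,b)\ \longmapsto\ \bigl(-\tfrac{b}{u},\,u,\,-\xi u\bigr). \]
This is biregular on $\sS\setminus\sS_0$ (where it agrees with the linear action on $\XX\setminus\{0\}$) and restricts on $\sS_0$ to the biregular map $\xi\mapsto-\xi$; the same holds for $\sigma_3$. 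But the entry $-b/u$ has no morphic extension across $E$: along the curves $b=cu$ one finds $-b/u\to-c$, so the limit of the image at a point of $E$ depends on the direction of approach. Thus the two biregular actions — on $\sS_0$ and on $\sS\setminus\sS_0$ — cannot be glued to a morphism on $\sS$, and $\sigma_1,\sigma_3$ are precisely the Atiyah flops. I expect the sign bookkeeping in the substitution step to require only care, whereas the coordinate computation exhibiting the direction-dependent indeterminacy of $-b/u$ is the conceptual heart of the argument.
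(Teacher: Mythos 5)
Your proof is correct, and in fact the paper offers nothing to compare it with: the authors explicitly ``omit the simple proof,'' so your write-up supplies exactly the verification they had in mind, in the spirit of Atiyah's original argument. The conifold substitution $a=w-\tau$, $b=w+\tau$, the sign bookkeeping showing that $H$ preserves the ratio $\frac{a}{u}=\frac{v}{b}$ while $G'\setminus H$ exchanges it with $\frac{b}{u}=\frac{v}{a}$, and the chart computation $(\xi,u,b)\mapsto\bigl(-\tfrac{b}{u},u,-\xi u\bigr)$ with its direction-dependent limits along $b=cu$ at the exceptional curve, together establish every assertion of the lemma, including the failure of the fibrewise actions to glue.
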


\section{Nodal Burniat surfaces deform to  extended  Burniat surfaces}

In this section we shall show, for each value of $m=2,3$,
       that the canonical models $X$
of nodal Burniat surfaces with $K_X^2 = 6-m$, together with the extended
  Burniat surfaces with $K_X^2 = 6-m$ are parametrized by a
family with smooth connected base of respective dimension $1+m$,
which maps  to the moduli space via a finite morphism.

We shall treat first the easier case $m=2$.

\begin{prop}\label{famiglia4}
There exists a family, with connected base
$$B \subset  \{ (C_1 , \Ga_2) | C_1 \in | L - E_1| , \Ga_2 \in |2L -
E_2 - E_3 -E_4 -E_5| \}$$
where $ C_1, \Ga_2   $ are as in Definition \ref{df} ($ C_1$ is
irreducible and either
$\Ga_2   $ is irreducible, or splits as $ N_1 + E_1 + |L-E_2 - E_3|$),
parametrizing a flat family of canonical models, including exactly all
the nodal Burniat surfaces  and the extended  Burniat surfaces
with $K^2_X = 4$.
\end{prop}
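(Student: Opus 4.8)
The plan is to realize the family directly as a relative bidouble cover of the normal Del Pezzo surface $Y'$, exploiting the observation of Remark \ref{difference} that over $Y'$ the nodal and the extended covers share one and the same system of divisor classes. First I would record that the three branch divisor classes controlling the cover agree for the nodal divisors $D_i$ and for the extended divisors $\De_i$ once one passes to $Y'$: the relations $D_1 \equiv \De_1 + N_1$, $\De_2 \equiv D_2 + N_1$ and $\De_3 \equiv D_3 + N_1$ of Remark \ref{difference}(3-2), together with the contraction of $N_1$ to the node $\nu_1$, show that the images on $Y'$ lie in the same classes, as do the classes $L_i$ and $\Lambda_i$ of Remark \ref{difference}(4). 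Thus a single system of divisor classes on $Y'$ underlies both cases, and the genuinely varying datum is the choice of effective branch divisor through the pair $(C_1,\Ga_2)$: here $C_1 = \Ga_1$ is the line entering $\De_1$ and $\Ga_2$ is the conic entering $\De_2$, linked by $\Ga_1 + \Ga_2 \equiv -K_{\tilde Y}$ (relation $(**)$), whose degeneration to the reducible member produces the four-line divisor $D_2$ of the nodal case.

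Next I would build the total space as the relative $\mathrm{Spec}$ over $Y' \times B$ of the $\hol$-algebra $\hol \oplus \bigoplus_{i=1}^3 \sL_i^{-1}$ whose multiplication is encoded by the sections cutting out the branch divisors; the dependence on $(C_1,\Ga_2)$ enters only through the sections defining $\Ga_1 = C_1$ and $\Ga_2$. Away from the node this is a classical smooth bidouble cover, where flatness over $B$ is automatic from the fact that the branch sections vary in linear systems. At the node the structure is exactly the local classification of Section \ref{locdefnode}: an irreducible $\Ga_2$ yields a cover of type 2, while a $\Ga_2$ splitting admissibly as $N_1 + E_1 + |L-E_2-E_3|$ yields a cover of type 4. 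Proposition \ref{def-act} presents precisely these two as the $t\neq 0$ and $t=0$ members of the single flat family $X_t = \{w^2 = uv + t\}$ with common quotient $Y'$; identifying $t$ with the local equation measuring whether $\Ga_2$ passes through $\nu_1$ promotes this local flat deformation to global flatness of $\sX \to B$.

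It remains to match fibres and to check connectedness. By the dictionary above, the fibre over a point with $\Ga_2$ irreducible is the canonical model of an extended Burniat surface with $K_X^2 = 4$ (type 2 at the node, so $X$ is smooth there), while the fibre over the point with $\Ga_2 = N_1 + E_1 + |L-E_2-E_3|$ is the canonical model of a nodal Burniat surface (type 4, carrying the single $A_1$-singularity); the passage between them is exactly the algorithm of subtracting $N_1$ from $\De_2$ and restoring it to $\De_3$, $\De_1$, recorded in Remark \ref{difference}(3-2) and reproved locally in the remark following Proposition \ref{def-act}. Since every nodal, resp. extended, Burniat surface with $K_X^2 = 4$ is by definition determined by such a pair $(C_1,\Ga_2)$, the family contains exactly these surfaces and no others. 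For connectedness I would use that $|2L - E_2 - E_3 - E_4 - E_5|$ is a pencil, so its admissible locus --- the irreducible members together with the single admissible reducible member $N_1 + E_1 + |L-E_2-E_3|$ --- is a connected subset of $\PP^1$ containing that special point in the closure of the open part; crossing it with the irreducibility-open locus of $C_1 \in |L - E_1| \cong \PP^1$ keeps $B$ connected.

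The step I expect to be the main obstacle is the verification of flatness across the node, together with the check that the two limiting fibres are genuinely the asserted canonical models with the correct singularity type, rather than a non-normal or non-canonical degeneration. This is precisely where Section \ref{locdefnode} does the essential work: Proposition \ref{def-act} already packages the transition from type 4 to type 2 as one flat family with the correct quotient $Y'$, so the remaining task is the bookkeeping that glues this local model to the globally defined cover and confirms that no branch component is spuriously created or lost as $\Ga_2$ degenerates --- the content of the admissibility discussion of Remark \ref{difference}(5-2).
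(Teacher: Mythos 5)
Your proof is correct, but it is not the route the paper takes for this particular proposition: it is, almost verbatim, the paper's proof of the companion Proposition \ref{famiglia3} (the $m=3$ case), which the authors remark afterwards ``works also in the case $m=2$''. The paper's dedicated proof of Proposition \ref{famiglia4} exploits a trick special to $m=2$: since only $\Ga_2$ is allowed to degenerate, one has the equality of actual divisors $D_1 + D_3 = \De_1 + \De_3$, with $N_1$ a connected component. The authors therefore factor the bidouble cover through the intermediate double cover $\tilde{W}_b \ra \tilde{Y}$ branched on this common divisor; the preimage of $N_1$ is a $(-1)$-curve, which they contract to obtain an equisingular family of finite double covers $W'_b \ra Y'$. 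The difference between nodal and extended is then entirely concentrated in the second double cover $X_b \ra W'_b$: because $N_1$ becomes trivial on $W'_b$ (it contracts to a smooth point), the relation $\De_2 \equiv D_2 + N_1$ says the pullbacks of $\De_2$ (irreducible case) and of $D_2$ (reducible case) are all \emph{linearly equivalent} on $W'_b$, so they fit into a single family of branch divisors and flatness reduces to classical facts about double covers. What each approach buys: the paper's factorization keeps the argument within the theory of flat double covers and makes the transition $\De_2 \leftrightarrow D_2$ transparent via linear equivalence, but it is genuinely tied to $m=2$; your relative-$\mathrm{Spec}$ construction over $Y'$ is uniform in $m$, at the cost of needing the local analysis of Section \ref{locdefnode} (Proposition \ref{def-act}) to establish flatness across the node, exactly as you identify. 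One small imprecision in your write-up: the character sheaves on $Y'$ are not invertible at the node (the cover is non-flat there), so you should write the algebra as $\hol_{Y'} \oplus \bigl( \oplus_{i=1}^3 \sF_i \bigr)$ with $\sF_i$ reflexive rank-one sheaves, as the paper does, rather than $\oplus_i \sL_i^{-1}$; your subsequent appeal to the classification of Section \ref{locdefnode} shows you are in fact treating them correctly.
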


\begin{proof}
Recall that in this case $D_1 + D_3 = \De_1 + \De_3$, and that
$N_1$ is a connected component of the above divisor  $D_1 + D_3 =
\De_1 + \De_3$.

We can therefore construct a family of double covers
$$\tilde{W}_b \ra \tilde{Y}  $$
such that the inverse image of $N_1$ is a (-1)-curve.
Blowing down this (-1)-curve we get a family of finite double covers
$W'_b \ra Y'$,
which are nodal and equisingular.

Consider the pull back of the divisors $\De_2$ in the case where
$\Ga_2$ is irreducible, and of the divisors $D_2$ in the case where  $\Ga_2$ is
reducible.

Since $\De_2 \equiv D_2 + N_1$, and the divisor $N_1$ becomes trivial on
$W'_b$, since it contracts to a smooth point, it follows that all
these divisors are
linearly equivalent, and we have a family of divisors on the family  $W'_b$

We consider then the family of double covers $X_b \ra  W'_b$ branched on these
divisors, and on the nodes of $W'_b$.
\end{proof}

\begin{prop}\label{famiglia3}
There exists a family, with connected base
$$T \subset  \{ (\Ga_1 , \Ga_2, \Ga_3) \}$$ where $ \Ga_1 , \Ga_2,
\Ga_3  $ are as in
Definitions
\ref{df} and \ref{ext},
parametrizing a flat family of canonical models, including exactly all
the nodal Burniat surfaces  and the extended  Burniat surfaces
with $K^2_X = 3$.
\end{prop}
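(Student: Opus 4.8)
The plan is to imitate the construction of Proposition \ref{famiglia4}, building the canonical models as finite bidouble covers of the singular cubic surface $Y'$; the new feature for $m=3$ is that the three nodes $\nu_1,\nu_2,\nu_3$ are cyclically entangled, so the reduction used there is unavailable and the bidouble cover must be assembled \emph{directly}. I would take as base $T$ an open subset of the product of the three conic systems
$$ |2L - E_1 - E_2 - E_5 - E_6|\ \times\ |2L - E_2 - E_3 - E_4 - E_5|\ \times\ |2L - E_1 - E_3 - E_4 - E_6| $$
(fibred over the space of admissible configurations of $P_4,P_5,P_6$), consisting of those triples $(\Ga_1,\Ga_2,\Ga_3)$ in which each $\Ga_j$ is either irreducible or splits in the admissible way $\Ga_j = N_{j-1}+E_{j-1}+|L-E_j-E_{j+1}|$ of Definition \ref{ext}, and for which the resulting three divisors $\De_1,\De_2,\De_3$ (read off from Definition \ref{df}, modified by the redistribution of nodal divisors of Definition \ref{ext} wherever a conic has degenerated) are reduced. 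Over $T$ I want a single flat family $\mathcal X\to T$ of bidouble covers of $Y'$ whose fibre is a strictly extended Burniat canonical model when all $\Ga_j$ are irreducible, and which acquires the node $\nu_{j-1}$ precisely when $\Ga_j$ degenerates, so that the most degenerate fibres are exactly the nodal Burniat canonical models.

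First I would record why the argument of Proposition \ref{famiglia4} does not transcribe verbatim. There the single node was handled by the intermediate double cover branched on $D_1+D_3 = \De_1 + \De_3$, a divisor \emph{independent} of the Burniat/extended alternative. For $m=3$ no intermediate double cover has this property: from $\De_j \equiv D_j - N_j + N_{j-1} + N_{j+1}$ (Remark \ref{difference}, 3-3) one gets e.g. $\De_2 + \De_3 \equiv D_2 + D_3 + 2N_1$, so each of the three intermediate branch divisors really changes by a nodal class. The way around this is to work on $Y'$ rather than on $\tilde Y$: there the $N_i$ are contracted, so the two a priori different character data — the classes $L_i$ for the Burniat cover and $\Lambda_i = L_i + N_i$ (Remark \ref{difference}, 4) for the extended cover — push forward to the \emph{same} reflexive rank-one sheaves $\sL_i$ on $Y'$. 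Keeping these $\sL_i$ fixed, and letting only the branch sections (those cutting out the $\De_i$, which satisfy $\De_{i-1}+\De_{i+1}\equiv 2\Lambda_i$ in both cases) vary over $T$, produces a family of bidouble-cover algebras $\hol_{Y'}\oplus\bigoplus_i \sL_i$; this is the candidate family $\mathcal X\to T$.

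The heart of the proof is then flatness of $\mathcal X\to T$, and this is exactly where the local analysis of Section \ref{locdefnode} enters. Away from the nodes of $Y'$ the fibres are smooth (or equisingular) bidouble covers and flatness is automatic, so the only issue is at each $\nu_j$. There I would show that, in suitable local coordinates, $\mathcal X\to T$ is analytically the family of Proposition \ref{def-act}, $X_t=\{w^2 = uv + t\}\to\{z^2 = xy\}$, with $t$ a local equation for the degeneration of the relevant conic: for $t\neq 0$ the covering is of type $2$ (a smooth point of the fibre: the extended situation) and for $t=0$ of type $4$ (the node itself: the nodal situation). Since that local family is flat, and the way the nodal divisors are redistributed as $t\to 0$ is precisely the extended-to-Burniat algorithm recorded just after Proposition \ref{def-act}, the global family $\mathcal X\to T$ is flat with normal Gorenstein fibres having at worst nodes, i.e. canonical models; by construction they are exactly the canonical models of all extended and all nodal Burniat surfaces with $K^2=3$.

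Connectedness of $T$ is then immediate: it is an open subset of a tower of projective (conic) systems over the irreducible space of admissible point configurations, with the bad loci (non-reduced or inadmissibly reducible branch data) removed, hence it is irreducible and in particular connected. The main obstacle I anticipate is precisely the flatness step: one must check that the three local degenerations can be switched on independently and simultaneously without destroying flatness — that the global cover algebra specializes at all three nodes at once to the type-$4$ algebra, and that the accompanying global branch divisors stay reduced. This is where the irreducibility hypotheses on the $\Ga_i$ and the admissibility condition of Remark \ref{difference} (5-3) are essential; everything else is bookkeeping with the divisor classes already assembled in Remark \ref{difference} and Definition \ref{ext}.
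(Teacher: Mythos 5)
Your proposal is correct and takes essentially the same route as the paper's proof: both work directly on $Y'$, fixing the reflexive character sheaves (independent of the parameter precisely by Remark \ref{difference}, 4), since the $N_i$ are contracted), letting the multiplication/branch data vary with the triple $(\Ga_1,\Ga_2,\Ga_3)$, and deducing flatness automatically on $Y'\setminus \Sing(Y')$ and at the nodes from the local analysis of Section \ref{locdefnode} (Proposition \ref{def-act}). The additional material you supply --- why the intermediate-double-cover trick of the $m=2$ case fails, which node of the cover appears when $\Ga_j$ degenerates, and the connectedness of $T$ --- is a correct, more explicit spelling-out of what the paper leaves implicit.
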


\begin{proof}
Given a triple $(\Ga_1 , \Ga_2, \Ga_3)$, according to the reducibility
of each $\Ga_i$, there corresponds either a Burniat divisor, or an extended
Burniat divisor. We take the corresponding bidouble cover of $\tilde{Y}$,
hence  we construct four families of smooth surfaces, which are not necessarily
minimal. We take now the corresponding canonical models, which
are finite bidouble covers of the normal Del Pezzo surface $Y'$.

Observe that, given $p' : \tilde{S} \ra \tilde{Y}$, and $\pi :
\tilde{Y} \ra Y'$,
$$X = Spec ( \pi_* (p')_* \hol_{\tilde{S}})= Spec (\hol_{Y'}
\bigoplus (\oplus_{i=1}^3
\sF_i)).$$

Now the reflexive sheaves $\sF_i$ correspond to Weil divisors on $Y'$,
and they are independent of $t \in T$ by virtue of 4) of remark
\ref{difference}.

The multiplication maps correspond to a family of Weil divisors on $Y'$:
whence we get a flat family on $Y' \setminus Sing (Y')$.
Locally around the nodes the structure of the deformation is as described in
the previous section, therefore the family is flat everywhere.

\end{proof}

Observe that the latter proof works also in the case $m=2$.

\section{A corrigendum to Burniat surfaces II}

Parts 1), 2) and 3 ) of the following lemma were contained in Lemma
2.10 of \cite{burniat2},
while 4) corrects a wrongly stated assertion of 2) of loc. cit.

We also amend the  proof for the correct assertions.

\begin{lemma}\label{Hauptlemma} Consider a finite set of distinct linear forms
$$l_{\alpha} : =  y - c_{\alpha} x,
\alpha \in A$$ vanishing at the origin in $\CC^2$.

      Let $p \colon Z
\ra \CC^2$  be the blow up of
        the origin, let $D_{\alpha} $ be the strict transform of the line
$L _{\alpha} : = \{ l_{\alpha} = 0 \}$, and let $E$ be the exceptional divisor.

Let  $\Omega_{\CC^2}^1 ((\dlog l_{\alpha})_{\alpha \in A})$ be the
sheaf of rational 1-forms $\eta$ generated
by  $\Omega_{\CC^2}^1$ and by the differential forms
$d \log l_{\alpha}$ as an $\hol_{\CC^2}$-module and define similarly
$ \Omega^1_{Z}((\log D_{\alpha})_{\alpha \in A}) $. Then:

\begin{enumerate}
\item
$p_* \Omega^1_{Z}(\log E)(-E) = \Omega_{\CC^2}^1$,
\item
$p_* \Omega^1_{Z}(\log E, (\log D_{\alpha})_{\alpha \in A}) =
\Omega_{\CC^2}^1 ((\dlog l_{\alpha})_{\alpha \in A})$,
\item
$p_* \Omega^1_{Z}((\log D_{\alpha})_{\alpha \in A}) =
        \{ \eta \in \Omega_{\CC^2}^1 ((\dlog l_{\alpha})_{\alpha \in A})|
\eta =$

$=\sum_{\alpha} g_{\alpha} \dlog l_{\alpha} + \omega,
\omega \in \Omega_{\CC^2}^1, \sum_{\alpha} g_{\alpha}(0) = 0\}$.
\item
$ p_*
\Omega^1_{Z}( (\log D_{\alpha})_{\alpha
\in A})(E) \supset $  $\Omega_{\CC^2}^1 ((\dlog l_{\alpha})_{\alpha \in A})$
and $$dim_{\CC}  [ p_*
\Omega^1_{Z}( (\log D_{\alpha})_{\alpha
\in A})(E) / \Omega_{\CC^2}^1 ((\dlog l_{\alpha})_{\alpha \in A}) ]  = d-2$$
is supported  at the origin, where $ d : = |A|$.
More precisely, we have an exact sequence

$$ 0 \ra \Omega^1_{\CC^2} \ra  p_*
\Omega^1_{Z}( (\log D_{\alpha})_{\alpha
\in A})(E)   \ra
\bigoplus_{\alpha=1}^d \hol_{D_\alpha}(0) \ra \CC^2_0 \ra 0.
$$
\item
Assume w.l.o.g. $c_1 = 0$ in the following formulae: then

$p_* \Omega^1_{Z}(\log D_1)(-E) \subset \Omega_{\CC^2}^1(\dlog l_1)$
is the subsheaf of forms
$$\{ \omega = \alpha dx + \beta  \frac{dy}{y}| \beta (0) = 0,
\frac{\partial \beta}{ \partial y}(0) = 0,
\frac{\partial \beta}{ \partial x}(0) + \alpha (0)= 0\}. $$

\item
$p_* \Omega^1_{Z}(-E) = \mathfrak M_0 \Omega_{\CC^2}^1$.
\item
$p_* \Omega^1_{Z}(\log D_1, \log D_2)(-E) \subset 
\Omega_{\CC^2}^1(\dlog l_1, \dlog l_2)$
is the subsheaf of forms
$$\{ \omega = \alpha \frac{dx}{x}+ \beta  \frac{dy}{y}| \alpha (0) =
0, \beta (0) = 0,
\frac{\partial (\alpha + \beta)}{ \partial x}(0)  = 0,
\frac{\partial (\alpha + \beta) }{ \partial y}(0) = 0\}. $$

\end{enumerate}
\end{lemma}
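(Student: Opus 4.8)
The plan is to reduce every assertion to an explicit computation on the two standard charts of the blow-up and to read off the stated sheaves by matching Taylor coefficients. Since $p$ is an isomorphism over $\CC^2\setminus\{0\}$ and all the sheaves in sight are torsion-free, a section of any $p_*(\cdots)$ over a neighbourhood $U$ of the origin is determined by the rational $1$-form it induces on $U\setminus\{0\}$; the whole content of the lemma is to decide which such rational forms extend to genuine sections of the prescribed sheaf on $p^{-1}(U)$. I would fix the charts $(x,t)\mapsto(x,xt)$ (so $E=\{x=0\}$, $D_\alpha=\{t=c_\alpha\}$) and $(s,y)\mapsto(sy,y)$ (so $E=\{y=0\}$), and record the basic pullbacks, the crucial one being $p^*\dlog l_\alpha=\frac{dx}{x}+\frac{dt}{t-c_\alpha}$ in the first chart, together with $p^*(P\,dx+Q\,dy)=[P+tQ]\,dx+xQ\,dt$ (coefficients evaluated at $(x,xt)$). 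In these terms twisting by $-E$ means ``divisible by $x$'', twisting by $+E$ means ``a simple pole in $x$ is allowed'', and $\log$-poles along $E$, $D_\alpha$ single out the $\frac{dx}{x}$, $\frac{dt}{t-c_\alpha}$ directions only.

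With this dictionary parts (1), (2), (3) and (6) are immediate. For (1) the $(-E)$-twist forces the pullback to be divisible by $x$, which pins it down to $\Omega^1_{\CC^2}$; for (6) the same divisibility applied to $[P+tQ]\,dx+xQ\,dt$ gives exactly $P(0)=Q(0)=0$, i.e. $\mathfrak{M}_0\Omega^1_{\CC^2}$. For (2) one checks that $p^*\dlog l_\alpha$ is a log form along $E+D_\alpha$, so the two sheaves agree. For (3) the only difference from (2) is that poles along $E$ are now forbidden: the $\frac{dx}{x}$-coefficient of $p^*(\sum_\alpha g_\alpha\dlog l_\alpha+\omega)$ equals $\sum_\alpha g_\alpha(x,xt)$, and requiring it to vanish on $\{x=0\}$ gives precisely the single condition $\sum_\alpha g_\alpha(0)=0$.

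Part (4) is the heart of the matter and the place where the earlier statement has to be corrected. The inclusion $\Omega^1_{\CC^2}((\dlog l_\alpha))\subset p_*\Omega^1_Z((\log D_\alpha))(E)$ is clear, since $p^*\dlog l_\alpha$ has only a logarithmic (hence order $\le1$) pole along $E$. To compute the quotient I would use the residue map along the strict transforms $D_\alpha$: the $+E$-twist now permits the residue along $L_\alpha$ to acquire a simple pole at the node, so its natural target is $\bigoplus_\alpha\hol_{D_\alpha}(0)$, where $\hol_{D_\alpha}(0)$ allows a simple pole at the origin, and the kernel of this residue map is exactly the holomorphic forms $\Omega^1_{\CC^2}$. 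The crux is the image. Writing $r_\alpha$ for the polar part of the residue at the origin and viewing $\sum_\alpha r_\alpha[c_\alpha]$ as a principal part distributed over the points $t=c_\alpha$ of $E\cong\PP^1$, the requirement that a section extend over the whole exceptional divisor --- i.e. the compatibility of the two charts, equivalently the absence of a spurious pole along the strict transform of $\{x=0\}$ --- is exactly that this distribution pair to zero against $H^0(\PP^1,\hol_{\PP^1}(1))\cong\CC^2$, namely $\sum_\alpha r_\alpha=0$ and $\sum_\alpha c_\alpha r_\alpha=0$. This is the map $\bigoplus_\alpha\hol_{D_\alpha}(0)\twoheadrightarrow\CC^2_0$ (onto because the $(1,c_\alpha)$ span $\CC^2$ once $d\ge2$ and the $c_\alpha$ are distinct), so its kernel is the image of the residue and we obtain the asserted four-term exact sequence; quotienting by the honest residues ($r_\alpha=0$) leaves $\CC^{d-2}_0$ and the dimension $d-2$. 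I expect the careful identification of these two constraints, and the verification that the second chart imposes nothing beyond them, to be the main difficulty; everything else is formal once the pullback dictionary is in place.

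Finally, parts (5) and (7) are the same kind of explicit jet computation carried out for one, resp. two, of the lines together with the $(-E)$-twist. For (5), with $c_1=0$ so that $l_1=y$ and $p^*\frac{dy}{y}=\frac{dx}{x}+\frac{dt}{t}$, writing the section as $\alpha\,dx+\beta\frac{dy}{y}$ and imposing that $p^*\omega$ be divisible by $x$ and have only a logarithmic pole along $D_1=\{t=0\}$ yields, at the origin, $\beta(0)=0$ (divisibility of $\beta(x,xt)$ by $x$) and then $\partial_y\beta(0)=0$, $\alpha(0)+\partial_x\beta(0)=0$, since the $dx$-coefficient at $x=0$ equals $\alpha(0)+\partial_x\beta(0)+\partial_y\beta(0)\,t$. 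Part (7) is identical with two lines: the $(-E)$-twist in the two charts produces the symmetric conditions $\alpha(0)=\beta(0)=0$ and $\partial_x(\alpha+\beta)(0)=\partial_y(\alpha+\beta)(0)=0$. A final check that the computations in the two charts are mutually consistent completes the proof.
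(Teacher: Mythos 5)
Your overall framework (sections of a pushforward are rational forms on $U\setminus\{0\}$, and one decides which ones pull back into the prescribed sheaf) is the same as the paper's, and your chart computations for (1), (5), (6), (7) are correct and essentially identical to the published ones. But at the two decisive points the proposal asserts rather than proves. First, in (2) you only check the trivial inclusion: verifying that $p^*\dlog l_\alpha$ has logarithmic poles along $E+D_\alpha$ shows $\Omega_{\CC^2}^1((\dlog l_\alpha)_{\alpha\in A})\subseteq p_*\Omega^1_Z(\log E,(\log D_\alpha)_{\alpha\in A})$, but the content of (2) is the converse: that an arbitrary rational form whose pullback has at worst these log poles lies in the $\hol_{\CC^2}$-module generated by $\Omega^1_{\CC^2}$ and the $\dlog l_\alpha$. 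This is not ``immediate'' from the chart dictionary, because pushforward is local on $\CC^2$, not on $Z$: you must control the coefficients along the whole fibre $E$ at once, including their behaviour as $t\to\infty$. The paper does exactly this with the pseudopolynomial normal form, the divisibility conditions $l_\alpha\mid(f+c_\alpha g)$, the identity $fx+yg=c(x)\prod_\alpha l_\alpha$, and the expansion of $g$ in the basis $P_\beta=\prod_{\alpha\neq\beta}l_\alpha$. (A shortcut that would close this gap: take the residues of $p^*\eta$ along the $D_\alpha$, extend them to holomorphic functions $g_\alpha$ on $\CC^2$, subtract $\sum_\alpha g_\alpha\dlog l_\alpha$, and apply Hartogs to what remains.) Note that your (3), and the inclusions you use in (5) and (7), all rest on (2), so this gap propagates.

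Second, in (4) you correctly identify the two linear conditions $\sum_\alpha r_\alpha=0$ and $\sum_\alpha c_\alpha r_\alpha=0$ on the polar parts of the residues --- these agree with the paper's --- but you prove neither their necessity nor their sufficiency, and you explicitly defer this (``I expect \dots to be the main difficulty''). Necessity is not just ``absence of a spurious pole along the strict transform of $\{x=0\}$'': that accounts for only one of the two conditions ($\sum_\alpha c_\alpha r_\alpha=0$ in the paper's coordinates); the other, $\sum_\alpha r_\alpha=0$, is the residue theorem on $E\cong\PP^1$ applied to the restricted forms $\iota_E^*(x\,p^*\eta)$ and $\iota_E^*(y\,p^*\eta)$ (equivalently, pairing the polar-part distribution with the conormal sections $x,y\in H^0(\hol_E(1))$, as you suggest but do not carry out). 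Sufficiency --- exactness at $\bigoplus_\alpha\hol_{D_\alpha}(0)$, without which the dimension count $d-2$ fails --- requires exhibiting sections realizing every admissible residue vector; the paper does this by parametrizing the quotient by $g$ homogeneous of degree $d-2$ in the basis $Q_\beta$ and computing the residues explicitly, and alternatively one can check that $\eta=\frac1x\sum_\alpha r_\alpha\dlog l_\alpha$ lies in $p_*\Omega^1_Z((\log D_\alpha)_{\alpha\in A})(E)$ precisely when the two conditions hold. So the proposal is a sound plan with the right answer (including the corrected statement (4)), but its core steps are the ones left open.
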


\begin{proof}

We show 2), 3), 4), 5)  and 7).

Observe that
$$ p_* \Omega^1_{Z}( (\log D_{\alpha})_{\alpha \in A})(mE) $$
consists of rational differential 1-forms
$\omega$ which, when restricted to $\CC^2 \setminus \{0\}$, yield sections of
$\Omega_{\CC^2}^1 ((\dlog l_{\alpha})_{\alpha \in A}) $.

Therefore in particular $\omega \prod_{\alpha \in A}  l_{\alpha} $ is
a regular holomorphic 1-form on $\CC^2$. Hence $\omega$,
      modulo holomorphic 1-forms,  can be written as
$$ \omega = \frac{f } {\prod_{\alpha \in A}  l_{\alpha}} dx +
\frac{g } {\prod_{\alpha \in A}  l_{\alpha}} dy,$$ where $f,g$ are
pseudopolynomials of degree in $y$ strictly less than
$d : = {\rm card} (A)$.

      Since  $ dy = d l_{\alpha}  + c_{\alpha} dx $,  the condition that
$\omega$
        restricted to $\CC^2 \setminus \{0\}$ yields a section of
$\Omega_{\CC^2}^1 ((\dlog l_{\alpha})_{\alpha \in A}) $ implies that
$  l_{\alpha} | (f +   c_{\alpha} g) $.

Whence $  l_{\alpha}$ divides $f x + y g$, and we conclude,
since $ {\prod_{\alpha \in A}  l_{\alpha}}$   is a  pseudo polynomial
of degree $d$,  that

$$ f x + y g =   c(x) {\prod_{\alpha \in A}  l_{\alpha}}.$$

This allows us to write,  modulo holomorphic 1-forms,
$$ \omega = \frac{ g ( dy - \frac{y}{x} dx)} {\prod_{\alpha \in A}
l_{\alpha}}  +
\frac{c } {x} dx,$$
where now $ c \in \CC$.

Let us pull back $ \omega$ to $Z$, using local coordinates $ (x,t)$
such that $ y = xt$, and where we make the assumption $c_{\alpha}
\neq 0,  \forall \alpha$.

$$ p^* \omega =  \frac{ x^{-d} g(x, xt) ( x  dt)} {\prod_{\alpha \in
A}  (t - c_{\alpha})}  +
\frac{c } {x} dx.$$

The pull back form has logarithmic poles along $E$ iff $  g(x, y)$
has multiplicity
at least $d-1$ at the origin, and poles of order at most one along $E$
iff $  g(x, y)$ has multiplicity
at least $d-2$ at the origin.

Observe that the $d$ polynomials $ P_{\beta} : =  \prod_{\alpha \in
A,\alpha \neq \beta }  l_{\alpha}$
are linearly independent and homogeneous of degree $d-1$, hence they generate
the vector space of homogeneous polynomials of degree $d-1$, hence
they generate the ideal of holomorphic functions vanishing at the origin of
      multiplicity
at least $d-1$.

Hence  $  g(x, y)$ has multiplicity
at least $d-1$ iff we can write
$$ g = \sum _{\alpha \in A}   g_{\alpha} P_{\alpha}.$$

And since $g$ is a pseudo polynomial of degree  $\leq d-1$, the $g_{\alpha}$'s
are just functions of $x$.

In this case we can write

$$  \omega = \frac{c } {x} dx +  \sum_{\alpha \in A}
\frac{g_{\alpha}}{l_{\alpha}}
(dy - \frac{y}{x} dx)
=  \frac{1 } {x}  [ c dx + \sum_{\alpha \in A}
\frac{g_{\alpha}}{l_{\alpha}}  (x dy - y dx) ] .$$
$$ =  \frac{1 } {x}  [ c dx + \sum_{\alpha \in A}
\frac{g_{\alpha}}{l_{\alpha}}  (x d l_{\alpha}  + xc_{\alpha} dx - y
dx) ]=
\sum_{\alpha \in A} \frac{g_{\alpha}}{l_{\alpha}} d l_{\alpha} +
\frac{1 } {x} dx [c - \sum_{\alpha \in A} g_{\alpha}] .$$

The above  form $  \omega$ does not have poles on the line $x=0$  if
and only if
$c = (\sum_{\alpha \in A} g_{\alpha}(0)) $.

Observing that the strict transform of the line $x=0$ is not among 
the divisors $D_{\alpha}$,
we establish claim (2), while (3) follows since $c=0$ iff
there are no poles along $E$.

The very first assertion of (4) follows by (2), so let's proceed to verify
the other assertions.

Assume now that $ p^* \omega$ has poles of order $1$ along $E$;
equivalently, assume
that  $  g(x, y)$ has multiplicity
at least $d-2$ at the origin. Since we already dealt with the case
where this multiplicity
is   at least $d-1$, we may assume that  $  g(x, y)$ is homogeneous
of degree $d-2$,
and that $c=0$.

Argueing as done before, the space of homogeneous polynomials of degree $d-2$
has as basis the $d-1$ polynomials ($\beta = 1, \dots, d-1$)
      $$ Q_{\beta} : =  \prod_{\alpha \in A,\alpha \neq \beta ,\alpha \neq
d}  l_{\alpha}.$$

Whence $ g = \sum _{\alpha \in A, \alpha \neq d}   g_{\alpha} Q_{\alpha},$
where $g_{\alpha} \in \CC$, and we may write:

$$  \omega =  \sum_{\alpha =1}^{d-1} \frac{g_{\alpha}}{l_{\alpha}
l_d}  (dy - \frac{y}{x} dx)  .$$

Since we want no poles on the line $x=0$, we must have
$$ \sum_{\alpha =1}^{d-1} \frac{g_{\alpha} y }{y^2} = 0
\Leftrightarrow \sum_{\alpha =1}^{d-1} g_{\alpha} = 0.$$

Under this condition we may then write
$$  \omega =  \sum_{\alpha =1}^{d-1} \frac{g_{\alpha}}{l_{\alpha}
l_d}  (d l_{\alpha} )  ,$$
which has logarithmic poles along $ l_{\alpha}  = 0$.

      Logarithmic poles along $ l_d  = 0$ follow  by writing
$$  \omega =  \sum_{\alpha =1}^{d-1} \frac{g_{\alpha}}{l_{\alpha}
l_d}  (d l_d ) +
      \sum_{\alpha =1}^{d-1} \frac{g_{\alpha} (c_d -
c_{\alpha})}{l_{\alpha} l_d}  dx   ,$$

      and observing that $ \sum_{\alpha =1}^{d-1} \frac{g_{\alpha} (c_d -
c_{\alpha})}{ y - c_{\alpha} x}  $
      vanishes for $l_d = 0 $ since on $\{ l_d = 0 \}$ we have $  y = c_d x$.

      Applying the residue sequence, we see that each such form 
$\omega$ has as residue
on $D_{\alpha}$
      a function with a single pole at most at the origin $O$, and with
coefficient of $\frac{1}{x}$
      respectively equal to $r_d : =  \sum_{\alpha =1}^{d-1}
\frac{g_{\alpha}}{ (c_d - c_{\alpha})}$
      in the case of $D_d$, and $ r_{\alpha} : = -  \frac{g_{\alpha}}{
(c_d - c_{\alpha})}$
      in the case of $D_{\alpha}$.

      In other words, the sum of the `double' residues  is $0$, and the
other condition
      $ \sum_{\alpha =1}^{d-1}g_{\alpha}  =0$ can be also written down as
      $ \sum_{\alpha =1}^{d}c _{\alpha}  r _{\alpha}  =0$.

To show 5), observe that

$$p_* \Omega^1_{Z}(\log D_1)(-E) \subset p_* \Omega^1_{Z}(\log D_1)
\subset \Omega_{\CC^2}^1(\dlog l_1).$$

Take coordinates $x,y$ such that $ l_1 = y$, and write
$\omega = \alpha dx + \beta   \frac{dy}{y}$.

We just pull back $\omega$ on the blow up $Z$ in the chart where we have $ y =
tx$, and impose that it
lies in the span of $$ x \frac{dt}{t} , x dx.$$

We have
$$\omega =   \alpha (x, tx) dx + \beta  (x, tx) ( \frac{dt}{t} +
\frac{dx}{x} )$$
and we must clearly have $\beta (0)=0$.

Then $ \beta  (x, tx)  \frac{dt}{t}$ is a multiple of $ x
\frac{dt}{t}$, and it suffices to require
that $ \alpha (x, tx)  +  \frac{1}{x} \beta  (x, tx)$ be divisible by $x$.

Writing $\beta (x,y) = \beta_1 x + \beta_2 y + \dots$, our condition
boils down to the divisibility by $x$
of
$$  \alpha (0) + \beta_1 + \beta_2 t \Leftrightarrow \beta_2  = 0 , \
\alpha (0) + \beta_1 = 0 .$$

Finally, let us show 7).
Write $$ \omega = \alpha \frac{dx}{x}+ \beta  \frac{dy}{y}$$
and pull back to the blow up in the chart where $y = tx$:
we get  $$  (\alpha + \beta) \frac{dx}{x}+ \beta  \frac{dt}{t},$$
which must be divisible by $x$, hence in particular $\beta (0) = 0$.
Looking at the other chart we get symmetrically  $\alpha (0) = 0$.

Now, $\alpha + \beta$ must vanish of order two, in order that
its pull back be divisible by $x^2$.

\end{proof}

Corollary 2.11 of \cite{burniat2} has also to be modified,
as we shall show in proposition \ref{dimensions} of the next section: 
the only non vanishing cohomology
group
$$ H^0 ( \Omega_{\tilde{Y}}^1 (\log (D_i)) (K_{\tilde{Y}} + L_i  ))
     = H^0 ( \Omega_{\tilde{Y}}^1 (\log (D_i)) (E_i - E_{i+2}   ))$$
occurs for $i=3$ (not for $i=1$), and it has dimension equal to $1$.

\section{Local deformations of the extended  Burniat surfaces}

We begin with an easy but useful observation
\begin{lemma}\label{poles}
Assume that $N$ is a  connected component of a smooth divisor $D \subset Y$,
where $Y$ is a smooth projective surface.

Moreover, let $M$ be a divisor on $Y$. Then
$$ H^0 ( \Omega_{Y}^1 (\log (D-N))(N+M ) ) = H^0 ( \Omega_{Y}^1 (\log 
(D))(M) ) $$
provided $(K_Y + 2 N + M ) \cdot N < 0$.
\end{lemma}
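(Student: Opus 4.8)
The plan is to exhibit $\Omega_Y^1(\log D)(M)$ as a subsheaf of $\Omega_Y^1(\log(D-N))(N+M)$ whose quotient $Q$ is a line bundle supported on $N$, and then to show that the numerical hypothesis forces $\deg_N Q < 0$, so that $H^0(N,Q)=0$ and the two spaces of global sections coincide.

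First I would check the inclusion $\Omega_Y^1(\log D)(M) \subset \Omega_Y^1(\log(D-N))(N+M)$, equivalently $\Omega_Y^1(\log D)(-N) \subset \Omega_Y^1(\log(D-N))$. Since $N$ is a connected component of the smooth divisor $D$, it is disjoint from the other components, so near $N$ one has $D = N$ and $D - N = \emptyset$. In a local coordinate $n$ cutting out $N$ (with transverse coordinate $w$), the sheaf $\Omega_Y^1(\log N)(-N)$ is generated by $dn$ and $n\,dw$, both of which are regular, hence it sits inside $\Omega_Y^1 = \Omega_Y^1(\log(D-N))$ locally; away from $N$ the two sheaves agree. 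This gives the inclusion, with cokernel $Q$ supported on $N$, fitting into
$$0 \to \Omega_Y^1(\log D)(M) \to \Omega_Y^1(\log(D-N))(N+M) \to Q \to 0.$$

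Next I would identify $Q$. In the local model above the quotient $\Omega_Y^1(N)/\Omega_Y^1(\log N)$ is generated by the class of $\frac{dw}{n}$ and is annihilated by $n$, so it is a line bundle on $N$; tracking this generator through the conormal sequence $0 \to \hol_N(-N) \to \Omega_Y^1|_N \to \Omega_N^1 \to 0$ (the $dn$-direction is the conormal part, already killed by $\frac{dn}{n}$) identifies it, after restoring the twists, with $Q \cong \Omega_N^1 \otimes \hol_N(N+M)$.

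Finally, by adjunction $\deg \Omega_N^1 = (K_Y+N)\cdot N$, whence
$$\deg_N Q = (K_Y+N)\cdot N + (N+M)\cdot N = (K_Y + 2N + M)\cdot N.$$
Since $N$ is a smooth connected projective curve, the hypothesis $(K_Y+2N+M)\cdot N < 0$ gives $\deg_N Q < 0$ and hence $H^0(N,Q)=0$. Taking global sections in the exact sequence then yields the asserted equality, the injectivity on $H^0$ being automatic. The main point to get right is the precise identification of $Q$, in particular the normal-bundle twist $\hol_N(N)$ produced by the extra pole, since it is exactly this factor that turns $\deg_N Q$ into the intersection number appearing in the hypothesis.
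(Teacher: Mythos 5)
Your proof is correct and follows essentially the same route as the paper: the paper's (one-line) proof likewise identifies the cokernel of the inclusion $\Omega_{Y}^1(\log D)(M) \hookrightarrow \Omega_{Y}^1(\log(D-N))(N+M)$ as the line bundle $\Omega_{N}^1(N+M) = \hol_N(K_Y+2N+M)$ on $N$, which has negative degree by the hypothesis and hence no sections. Your write-up simply supplies the local-coordinate verification and the adjunction computation that the paper leaves implicit.
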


\begin{proof}
The cokernel of $ \Omega_{Y}^1 (\log (D)) (M) \ra   \Omega_{Y}^1 (\log
(D-N))(N + M) $ is supported on $N$
and equal to $ \Omega_{N}^1 (N +M) = \hol_N (K_Y + 2 N + M)$.

\end{proof}

The lemma will be applied several times  in the case where $N \cong 
\PP^1$ and $N^2 < 0$.

Another useful lemma which will be crucial in some calculation is the following

\begin{lemma}\label{triangle}
Assume that we have three linearly independent linear forms on $\PP^2$,
$ l_1 := x_1 , l_2 := x_2 , l_3 := x_3$. Then

\begin{enumerate}
\item
$$ H^0 ( \Omega_{\PP^2}^1  (2))$$
has as basis the 3 1-forms, for $ \ j < i $,
$$\eta_{ji} : = x_j d x_i - x_i dx_j = - \eta_{ij}. $$
\item

$$ H^0 ( \Omega_{\PP^2}^1 (\dlog l_1,\dlog l_2, \dlog l_3 ) (1))$$
has as basis the 6 1-forms
$$\omega_{ij} : = \frac{x_j d x_i - x_i dx_j}{x_i}. $$
\item
$$ H^0 ( \Omega_{\PP^2}^1 (\dlog l_1,\dlog l_2, \dlog l_3 ) (2))$$
has as basis the 3 1-forms $\eta_{ji}$, for $ \ j < i $, plus the 6 1-forms
$x_j \omega_{ij}  $
and the 3 1-forms $ x_1 \omega_{23}, x_2 \omega_{31}, x_3 \omega_{12}. $

\end{enumerate}
\end{lemma}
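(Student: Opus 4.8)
The plan is to read off all three spaces from the (logarithmic) Euler sequence on $\PP^2$, which presents global sections as triples of homogeneous forms subject to one linear relation, and then to recognise the listed $1$-forms inside that presentation. Throughout I write $l_i = x_i$ and let $D = \{x_1x_2x_3 = 0\}$ be the coordinate triangle.

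For (1) I would twist the Euler sequence by $\Oh(2)$,
\[
0 \to \Omega^1_{\PP^2}(2) \to \Oh_{\PP^2}(1)^{\oplus 3} \xrightarrow{(x_1,x_2,x_3)} \Oh_{\PP^2}(2) \to 0 .
\]
The map on $H^0$ is surjective by the Euler identity $\sum_i x_i\,\partial_i q = 2q$, so $H^0(\Omega^1_{\PP^2}(2))$ is the space of triples $(a_1,a_2,a_3)$ of linear forms with $\sum_i a_i x_i = 0$. This is the degree-one part of the syzygy module of $(x_1,x_2,x_3)$, of dimension $9-6=3$, and it is spanned by the Koszul syzygies $x_j e_i - x_i e_j$; translating back, these are exactly the three forms $\eta_{ji} = x_j\,dx_i - x_i\,dx_j$ with $j<i$, proving (1).

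For (2) and (3) the key input is the logarithmic Euler sequence
\[
0 \to \Omega^1_{\PP^2}(\dlog l_1,\dlog l_2,\dlog l_3) \to \Oh_{\PP^2}^{\oplus 3} \xrightarrow{\ \mathrm{sum}\ } \Oh_{\PP^2} \to 0 ,
\]
obtained from the ordinary Euler sequence by replacing the frame $dx_i$ with the logarithmic frame $\tfrac{dx_i}{x_i}$: the quotient map is contraction with the Euler field, which sends $\tfrac{dx_i}{x_i}$ to $1$, hence acts as $(g_1,g_2,g_3)\mapsto \sum_i g_i$, and its kernel is precisely the descent (i.e. $\iota_E = 0$) condition cutting out the logarithmic cotangent sheaf. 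Twisting by $\Oh(k)$ and taking global sections (the sum map is visibly surjective on $H^0$) identifies $H^0\big(\Omega^1_{\PP^2}(\dlog l_1,\dlog l_2,\dlog l_3)(k)\big)$ with $\{(g_1,g_2,g_3)\mid g_i\in H^0(\Oh(k)),\ \sum_i g_i = 0\}$, of dimension $2\binom{k+2}{2}$, i.e. $6$ for $k=1$ and $12$ for $k=2$. Under this identification I compute $\omega_{ij} = x_j\big(\tfrac{dx_i}{x_i}-\tfrac{dx_j}{x_j}\big)$, so $\omega_{ij}$ corresponds to the triple $g_i = x_j,\ g_j = -x_j,\ g_k = 0$; likewise each $\eta_{ji}$, each $x_j\omega_{ij}$, and each of $x_1\omega_{23},x_2\omega_{31},x_3\omega_{12}$ becomes an explicit triple of quadrics summing to zero.

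It then remains only to check that the proposed families are linearly independent, whereupon, having the correct cardinality, they are bases. For (2) the six triples attached to the $\omega_{ij}$ are checked to be independent directly. For (3) I would exploit that the entries split by monomial type: the six forms $x_j\omega_{ij}$ have entries $\pm x_k^2$ and live in the six-dimensional ``square'' coordinate block, where they reproduce exactly the independence pattern of (2); the three $\eta_{ji}$ together with $x_1\omega_{23},x_2\omega_{31},x_3\omega_{12}$ have entries $\pm x_kx_l$ and span the complementary six-dimensional ``cross'' block. Since the two blocks are complementary, independence within each yields independence of all twelve. The only genuine work is this bookkeeping in (3), and the main point requiring care is fixing the homogeneity conventions so that the forms $\tfrac{dx_i}{x_i}$ really furnish the trivialisation underlying the logarithmic Euler sequence; after that everything reduces to elementary linear algebra.
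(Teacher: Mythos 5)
Your proposal is correct, but it takes a genuinely different route from the paper's. You present the sheaf via the logarithmic (toric) Euler sequence
\begin{equation*}
0 \ra \Omega^1_{\PP^2}(\dlog l_1,\dlog l_2,\dlog l_3) \ra \Oh_{\PP^2}^{\oplus 3} \xrightarrow{\ \mathrm{sum}\ } \Oh_{\PP^2} \ra 0 ,
\end{equation*}
so that, after twisting by $\Oh(k)$, global sections become triples $(g_1,g_2,g_3)$ of degree-$k$ forms with $\sum_i g_i = 0$ (the coefficients in the frame $dx_i/x_i$, the relation being contraction with the Euler field). This makes the dimension counts $2\binom{k+2}{2} = 6, 12$ immediate and reduces the independence of the listed forms to bookkeeping, which your monomial-block decomposition handles correctly: the six $x_j\omega_{ij}$, with entries $\pm x_j^2$, fill the six-dimensional ``square'' block, while the three $\eta_{ji}$ together with $x_1\omega_{23}, x_2\omega_{31}, x_3\omega_{12}$ give two independent vectors in each two-dimensional ``mixed-monomial'' slice, filling the complementary block. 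The paper instead uses the residue exact sequence $0 \ra \Omega^1_{\PP^2}(k) \ra \Omega^1_{\PP^2}(\dlog l_1,\dlog l_2,\dlog l_3)(k) \ra \oplus_i \hol_{l_i}(k) \ra 0$, the vanishing of $H^0$ and $H^1$ of $\Omega^1_{\PP^2}(1)$ (resp.\ of $H^1(\Omega^1_{\PP^2}(2))$) for the same dimension counts, and explicit chart computations for membership and independence. Your route buys a cleaner, purely algebraic argument (everything is a syzygy computation); the paper's route buys machinery that is reused downstream — the chart and residue descriptions of the $\omega_{ij}$ feed directly into Corollary \ref{dxi} and the evaluations in Proposition \ref{dimensions}, and the residue sequence still applies when the lines are not in general position, as in Lemma \ref{Hauptlemma}, where the lines are concurrent and the $\dlog$ sheaf is no longer the standard log sheaf, which is exactly where your toric presentation would fail. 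The one step you should make explicit is the identification of the paper's sheaf, defined as the $\hol$-module generated by $\Omega^1_{\PP^2}$ and the $\dlog l_i$, with the usual log sheaf of the coordinate triangle underlying your Euler sequence; this holds precisely because the three lines form a normal crossing divisor, and it is the only point of your argument that uses the geometry of the configuration.
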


\begin{proof}

1) is well known and follows from the Euler sequence.

2)
Take the chart $x_i \neq 0 \Leftrightarrow x_i = 1$: then in this chart
$\omega_{ij} : =-  dx_j $ is a regular 1-form.

In the chart $x_j = 1$ we have $\omega_{ij} : = \frac{d x_i }{x_i},$
while in the chart  $x_h = 1$ we have $\omega_{ij} : =  x_j \frac{d
x_i }{x_i} - d x_j.$

Hence $\omega_{ij}$ has logarithmic poles on $ x_i = 0$, and the
coefficient of the
logarithmic term vanishes for $x_i = x_j = 0$, and is equal to $1$ in
$x_i = x_h = 0$.

The above observation shows the linear independence of the above 6 forms.

Moreover, $\omega_{ij} $ is an eigenvector with character $\lambda$
for the $\CC^*$-action
$ x_i \mapsto \lambda x_i$, hence $\omega_{ij} \in  H^0 (
\Omega_{\PP^2}^1 (\dlog l_1,\dlog l_2, \dlog l_3 ) (1))$.

It suffices to show that this space has vector dimension equal to 6.

This follows however from the exact sequence
    $$ 0 \ra   \Omega_{\PP^2}^1 (1) \ra  \Omega_{\PP^2}^1 (\dlog 
l_1,\dlog l_2, \dlog l_3 ) (1)
\ra \oplus_{i=1}^3 \hol_{l_i}(1) \ra 0$$
and the vanishing of $H^j ( \Omega_{\PP^2}^1 (1) )$ for $ j=0,1$.

3)  Observe that $\omega_{ij} = \frac{1}{x_i}\eta_{ji} $,
so that  $x_i \omega_{ij} = \eta_{ji} = - \eta_{ij}= x_j \omega_{ji} $.

Moreover, if $h \neq i, j$,  $x_h \omega_{ij} - x_j \omega_{ih} = \eta_{jh}$,
so that the products  $x_r \omega_{ij} $ generate a subspace of
dimension at most 12.

  By the exact sequence

   $$ 0 \ra   \Omega_{\PP^2}^1 (2) \ra  \Omega_{\PP^2}^1 (\dlog 
l_1,\dlog l_2, \dlog l_3 ) (2)
\ra \oplus_{i=1}^3 \hol_{l_i}(2) \ra 0$$
and since $H^1 ( \Omega_{\PP^2}^1 (2) ) = 0$, $h^0 (\hol_{l_i}(2)) = 3$
we infer that the dimension is indeed 12.

Since $H^0 (\hol_{l_i}(2)) $ is generated by $H^0 (\hol_{\PP^2}(1)) 
\otimes_{\CC} H^0 (\hol_{l_i}(1)) $
we conclude that the 12 1-forms are a basis.
\end{proof}
\begin{lemma}\label{cross}
Assume that we have two linearly independent linear forms on $\PP^2$,
$ l_1 := x_1 , l_2 := x_2$.

\begin{enumerate}

\item
$ H^0 ( \Omega_{\PP^2}^1 (\dlog l_1,\dlog l_2) (1))$
has as basis the 4 forms
$$\omega_{ij} : = \frac{x_j d x_i - x_i dx_j}{x_i},  \ 1 \leq i,j 
\leq 3, \ i \neq 3.$$
\item
$ H^0 ( \Omega_{\PP^2}^1 (\dlog l_1,\dlog l_2) (2))$
has as basis the 3 forms $\eta_{ji}$, for $ \ j < i $, plus the 6 forms
$ x_2 \omega_{12}, x_1 \omega_{21}, x_3 \omega_{13}, x_3 \omega_{23}, 
x_2 \omega_{12}, x_1 \omega_{23}. $

\end{enumerate}
\end{lemma}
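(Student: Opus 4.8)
The statement to prove is Lemma \ref{cross}, which describes explicit bases for the spaces of logarithmic $1$-forms on $\PP^2$ with at most logarithmic poles along two of the three coordinate lines, twisted by $\hol_{\PP^2}(1)$ and $\hol_{\PP^2}(2)$. This is the natural two-line analogue of the three-line computation carried out in Lemma \ref{triangle}, so the plan is to mirror that proof, modifying it to account for the fact that we now only impose logarithmic poles along $l_1 = x_1$ and $l_2 = x_2$, leaving no pole condition on the third coordinate line.

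**Proof of part (1).** The plan for the degree-one case is to use the residue exact sequence
$$
0 \to \Omega_{\PP^2}^1(1) \to \Omega_{\PP^2}^1(\dlog l_1, \dlog l_2)(1) \to \hol_{l_1}(1) \oplus \hol_{l_2}(1) \to 0,
$$
which is the two-line version of the sequence used in Lemma \ref{triangle}(2). Since $H^0(\Omega_{\PP^2}^1(1)) = H^1(\Omega_{\PP^2}^1(1)) = 0$, the long exact sequence gives an isomorphism on global sections, so that the dimension is $h^0(\hol_{l_1}(1)) + h^0(\hol_{l_2}(1)) = 2 + 2 = 4$. For the explicit basis, I would verify that the four forms $\omega_{ij}$ with $i \neq 3$ lie in the space: exactly as in Lemma \ref{triangle}(2), each $\omega_{ij}$ is a $\CC^*$-eigenvector of weight one for the action scaling $x_i$, hence a global section of the twist by $\hol(1)$, and its logarithmic pole sits only along $x_i = 0$ with $i \in \{1,2\}$. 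Their linear independence follows from the same chart computation of residues as in Lemma \ref{triangle}. Thus these four forms span a $4$-dimensional space and are a basis.

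**Proof of part (2).** For the degree-two case I would again use the twisted residue sequence
$$
0 \to \Omega_{\PP^2}^1(2) \to \Omega_{\PP^2}^1(\dlog l_1, \dlog l_2)(2) \to \hol_{l_1}(2) \oplus \hol_{l_2}(2) \to 0.
$$
Using $H^0(\Omega_{\PP^2}^1(2)) = 3$ (the three forms $\eta_{ji}$ of Lemma \ref{triangle}(1)), $H^1(\Omega_{\PP^2}^1(2)) = 0$, and $h^0(\hol_{l_i}(2)) = 3$, the long exact sequence computes the dimension as $3 + 3 + 3 = 9$. The listed basis should therefore consist of the $3$ forms $\eta_{ji}$ together with $6$ further forms mapping to a basis of $H^0(\hol_{l_1}(2)) \oplus H^0(\hol_{l_2}(2))$; since $H^0(\hol_{l_i}(2))$ is generated by $H^0(\hol_{\PP^2}(1)) \otimes H^0(\hol_{l_i}(1))$, the products $x_r \omega_{ij}$ with $i \in \{1,2\}$ furnish the needed sections, and one checks as before that their residues span the target.

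**The main obstacle.** The one genuine subtlety I anticipate is that the list of six degree-two forms as typeset contains an evident transcription slip: the form $x_2\omega_{12}$ appears twice and there is no second $\eta$-type correction term, whereas the count from the exact sequence demands nine linearly independent forms, i.e. the three $\eta_{ji}$ plus six honestly distinct products. The careful step is thus to select, among the products $x_r\omega_{ij}$ with $i \in \{1,2\}$, a set of six that projects isomorphically onto $\hol_{l_1}(2) \oplus \hol_{l_2}(2)$, using the relations $x_i\omega_{ij} = \eta_{ji}$ and $x_h\omega_{ij} - x_j\omega_{ih} = \eta_{jh}$ from Lemma \ref{triangle}(3) to eliminate redundancies. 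Once the correct six are identified, independence and spanning follow from exactly the chart-by-chart residue computation already used in Lemma \ref{triangle}, so no new technique is required beyond bookkeeping.
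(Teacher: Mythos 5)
Your proposal is correct and follows essentially the same route as the paper's (very terse) proof, which likewise rests on exhibiting the listed forms inside the space of Lemma \ref{triangle}, checking their linear independence, and matching against the dimensions $4$, resp.\ $9$, obtained from the two-line residue exact sequence that you write out explicitly. Your observation about the duplicated entry is a genuine typographical slip in the paper: comparison with Corollary \ref{dxi}, part 2), shows that the fifth form in the degree-two list should be $x_2\omega_{13}$ rather than a second copy of $x_2\omega_{12}$.
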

\begin{proof}
Follows from lemma \ref{triangle} observing that $ H^0 ( 
\Omega_{\PP^2}^1 (\dlog l_1,\dlog l_2) (i))$ is a subspace of
$ H^0 ( \Omega_{\PP^2}^1 (\dlog l_1,\dlog l_2, \dlog l_3) (i))$. The 
above two sets of vectors are linearly independent
and the dimensions are 4, resp. 9.
\end{proof}

\begin{cor}\label{dxi}
1) Let
$\omega  \in H^0 ( \Omega_{\PP^2}^1 (\dlog l_1,\dlog l_2) (1)).$

  Then there are complex numbers $a_{ij}$ such that:
\begin{multline*}
\omega = a_{12}\omega_{12} + a_{21}\omega_{21} +a_{13}\omega_{13} 
+a_{23}\omega_{23} = \frac{dx_1}{x_1} (a_{12}x_2 -a_{21}x_1 
+a_{13}x_3) + \\
+ \frac{dx_2}{x_2} (-a_{12}x_2 +a_{21}x_1 +a_{23}x_3)
+dx_3(-a_{13} -a_{23}).
\end{multline*}
2) Let  $\omega  \in H^0 ( \Omega_{\PP^2}^1 (\dlog l_1,\dlog l_2) 
(2))$: then we can write
\begin{multline*} \omega = a_{12}\eta_{12} + a_{13}\eta_{13} 
+a_{23}\eta_{23} 
+a_{212}x_2\omega_{12}+a_{121}x_1\omega_{21}+a_{313}x_3\omega_{13}+\\
+a_{323}x_3\omega_{23}+a_{213}x_2\omega_{13}+a_{123}x_1\omega_{23} =
\end{multline*}
\begin{multline*}
  = \frac{dx_1}{x_1} (-a_{12}x_1x_2 -a_{13}x_3x_1 
+a_{212}x^2_2-a_{121}x_1^2+a_{313}x_3^2+a_{213}x_2x_3) + \\
+ \frac{dx_2}{x_2} (a_{12}x_1x_2 -a_{23}x_3x_2 
-a_{212}x_2^2+a_{121}x_1^2+a_{323}x_3^2 +a_{123}x_1x_3) +\\
+dx_3(a_{13}x_1 +a_{23}x_2-a_{313}x_3-a_{323}x_3-a_{213}x_2-a_{123}x_1).
\end{multline*}
3)Any  $ \omega \in H^0 ( \Omega_{\PP^2}^1 (\dlog l_1,\dlog l_2, 
\dlog l_3) (1))$ can be written as:
\begin{multline*} \omega = a_{12}\omega_{12} + a_{13}\omega_{13} 
+a_{23}\omega_{23} 
+a_{21}\omega_{21}+a_{31}\omega_{31}+a_{32}\omega_{32}=
\end{multline*}
\begin{multline*}
  = \frac{dx_1}{x_1} (a_{12}x_2 -a_{21}x_1+a_{13}x_3-a_{31}x_1) + \\
+ \frac{dx_2}{x_2} (-a_{12}x_2 +a_{21}x_1 +a_{23}x_3 -a_{32}x_2) +\\
+\frac{dx_3}{x_3}(-a_{13}x_3 +a_{31}x_1+a_{32}x_2-a_{23}x_3).
\end{multline*}
4) Any  $ \omega \in H^0 ( \Omega_{\PP^2}^1 (\dlog l_1,\dlog l_2, 
\dlog l_3) (2))$ can be written as:
\begin{multline*} \omega = a_{12}\eta_{12} + a_{13}\eta_{13} 
+a_{23}\eta_{23} 
+a_{212}x_2\omega_{12}+a_{313}x_3\omega_{13}+a_{323}x_3\omega_{23}+\\
+a_{121}x_1\omega_{21}+a_{131}x_1\omega_{31}+a_{232}x_2\omega_{32} 
+a_{123}x_1\omega_{23}+ a_{231}x_2\omega_{31}+ a_{312}x_3\omega_{12} =
\end{multline*}
\begin{multline*}
  = \frac{dx_1}{x_1} (-a_{12}x_1x_2 -a_{13}x_3x_1 +a_{212}x_2^2+a_{313}x_3^2-\\
-a_{121}x_1^2-a_{131}x_1^2-a_{231}x_2x_1+a_{312}x_3x_2) + \\
+ \frac{dx_2}{x_2} (a_{12}x_1x_2 -a_{23}x_3x_2 
-a_{212}x_2^2+a_{121}x_1^2+a_{323}x_3^2 +\\
+a_{123}x_1x_3-a_{232}x_2^2-a_{312}x_3x_2) +\\
+\frac{dx_3}{x_3}(a_{13}x_1x_3 
+a_{23}x_2x_3-a_{313}x_3^2-a_{323}x_3^2+a_{131}x_1^2+\\
+a_{232}x_2^2-a_{123}x_1x_3+a_{231}x_1x_2).
\end{multline*}

\end{cor}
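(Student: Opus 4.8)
The plan is to observe that Corollary \ref{dxi} is nothing but the coordinate-wise expansion of the bases already produced in Lemma \ref{cross} (for parts 1) and 2), where $l_3 = x_3$ is \emph{not} a logarithmic pole) and in Lemma \ref{triangle} (for parts 3) and 4), where all three $l_i$ are logarithmic poles). In every case the strategy is the same: take the explicit basis furnished by those lemmas, write $\omega$ as a linear combination of the basis vectors with undetermined complex coefficients $a_{ij}$ (resp. $a_{ijk}$), substitute the defining expressions for the basis forms, and finally collect the resulting terms according to the differentials $\tfrac{dx_1}{x_1}, \tfrac{dx_2}{x_2}, \tfrac{dx_3}{x_3}$ (or $dx_3$, in the two-form cases).

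For part 1) I would start from the basis $\omega_{12}, \omega_{21}, \omega_{13}, \omega_{23}$ of $H^0(\Omega_{\PP^2}^1(\dlog l_1, \dlog l_2)(1))$ given by Lemma \ref{cross}, and insert the identity
$$\omega_{ij} = \frac{x_j\,dx_i - x_i\,dx_j}{x_i} = x_j\,\frac{dx_i}{x_i} - dx_j.$$
Grouping the coefficients of $\tfrac{dx_1}{x_1}$ and of $\tfrac{dx_2}{x_2}$ produces the two logarithmic terms, while the remaining $-dx_3$ contributions from $\omega_{13}$ and $\omega_{23}$ assemble into the single holomorphic term $(-a_{13}-a_{23})\,dx_3$; this last piece is regular along $x_3 = 0$ precisely because $l_3$ is not among the allowed logarithmic poles. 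Part 3) is handled identically, the only change being that now $dx_3 = x_3\,\tfrac{dx_3}{x_3}$ is itself expanded, so that the third differential also appears with a logarithmic pole, reflecting that $l_3$ is now a log divisor; the six basis forms $\omega_{ij}$ of Lemma \ref{triangle} then recombine into the stated three logarithmic coefficients.

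For the quadratic cases 2) and 4) I would use the bases of Lemma \ref{cross}, part 2), and Lemma \ref{triangle}, part 3), consisting of the three forms $\eta_{ji}$ ($j<i$) together with the appropriate products $x_r\omega_{ij}$. Here I would first invoke the relations $x_i\omega_{ij} = \eta_{ji} = -\eta_{ij}$ and $x_h\omega_{ij} - x_j\omega_{ih} = \eta_{jh}$ recorded in the proof of Lemma \ref{triangle} to confirm that the nine, resp. twelve, chosen generators really are the announced basis; after that the computation is again a direct substitution of $\eta_{ji} = x_j\,dx_i - x_i\,dx_j$ and $x_r\omega_{ij} = x_r x_j\,\tfrac{dx_i}{x_i} - x_r\,dx_j$, followed by collection of the coefficients of each $\tfrac{dx_i}{x_i}$. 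The one point requiring attention is the correct distribution of every product among the three differentials in the two-form case 2), where the $dx_3$-contributions must once more organize themselves into a form regular along $x_3 = 0$.

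I expect no genuine mathematical obstacle: once the bases are taken from the two preceding lemmas the statement is a finite, purely algebraic bookkeeping exercise. The only thing demanding care is keeping the signs and the index conventions consistent throughout the collection step, and remembering that in parts 1) and 2) the third coordinate direction carries no logarithmic pole, whereas in parts 3) and 4) it does.
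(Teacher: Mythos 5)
Your proposal is correct and takes essentially the same route as the paper: the paper's entire proof of this corollary is the single line ``This is an easy verification,'' which is precisely the substitution of the basis forms of Lemma \ref{cross} and Lemma \ref{triangle} (using $\omega_{ij} = x_j\,\frac{dx_i}{x_i} - dx_j$ and $\eta_{ji} = x_j\,dx_i - x_i\,dx_j$) followed by collection of coefficients that you describe. Your extra step of re-verifying the bases via the relations $x_i\omega_{ij} = \eta_{ji}$ is harmless but unnecessary, since those bases are already established in the two preceding lemmas.
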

\begin{proof}
This is an easy verification.
\end{proof}

\begin{prop}\label{dimensions}

1) Assume that $S$ is a  nodal Burniat surface with  $K_S^2 = 4$ ($m=2$).
Then the dimension of the  vector space
$$ H^0 ( \Omega_{\tilde{Y}}^1 (\log (D_i)) (K_{\tilde{Y}} + L_i  ))
     = H^0 ( \Omega_{\tilde{Y}}^1 (\log (D_i)) (E_i - E_{i+2}) )$$
is $1$ for $i=3$, else it is $0$.

2) Consider instead extended Burniat divisors for $m=2$, and the
corresponding vector spaces
$$ H^0 ( \Omega_{\tilde{Y}}^1 (\log (\De_i)) (K_{\tilde{Y}} + \Lambda_i  )).$$
Then their dimensions are the same as in the Burniat case, namely,
$1$ for $i=3$, else $0$.

3) Assume that $S$ is a  Burniat surface with  $K_S^2 = 3$ ($m$=3).

Then each vector space
$$ H^0 ( \Omega_{\tilde{Y}}^1 (\log (D_i)) (K_{\tilde{Y}} + L_i  ))
     = H^0 ( \Omega_{\tilde{Y}}^1 (\log (D_i)) (E_i - E_{i+2}) )$$
is equal to 0.

4) In the case of (strictly or not strictly) extended  Burniat
divisors for $m=3$ we have $\forall i$:
$$ H^0 ( \Omega_{\tilde{Y}}^1 (\log (\De_i)) (K_{\tilde{Y}} + \Lambda_i
))  = 0.$$
\end{prop}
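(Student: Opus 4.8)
The plan is to compute every one of these groups by descending along the blow-down $\pi\colon\tilde{Y}\ra\PP^2$ and reducing to explicit linear algebra with logarithmic $1$-forms on $\PP^2$. Since $H^0(\tilde{Y},\mathcal{F})=H^0(\PP^2,\pi_*\mathcal{F})$, a section is a rational $1$-form $\omega$ on $\PP^2$ with at worst logarithmic poles along the images of the non-exceptional components of $D_i$; for each $i$ these are three lines through the vertex $P_i$ (for $i=3$: the side $\{x_2=0\}=\overline{P_1P_3}$ together with $\overline{P_3P_4}$ and $\overline{P_3P_5}$). First I would fix the ambient global space: choosing coordinates with $P_i=(0{:}0{:}1)$, the three lines lie in the pencil through $P_i$, and in the relevant twist their logarithmic forms are spanned by the explicit $\omega_{jk}$ and $\eta_{jk}$ of Lemmas \ref{triangle} and \ref{cross}, written out in coordinates by Corollary \ref{dxi}. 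This produces a general candidate form depending on finitely many constants $a_{\bullet}$.

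Next I would convert the exceptional data — the twist $K_{\tilde{Y}}+L_i=E_i-E_{i+2}$ and the component $E_{i+2}\subset D_i$ — into linear conditions on the $a_{\bullet}$ by applying Lemma \ref{Hauptlemma} at each $P_j$. At the apex $P_i$ the three lines form a triple point and the twist supplies $+E_i$, so the governing statement is part (4) of Lemma \ref{Hauptlemma} with $d=3$: its cokernel, of length $d-2=1$, is the \emph{only} possible source of a section beyond the honestly logarithmic ones. At $P_{i+2}$ the component $E_{i+2}$ carries the negative twist $-E_{i+2}$, so part (1) turns the would-be logarithmic pole along $E_{i+2}$ into a regularity (residue-vanishing) condition. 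At each remaining vertex $P_{i+1},P_4,P_5$, through which passes a single line, part (3) (with $d=1$) imposes regularity after blowing up. Collecting these gives a homogeneous linear system in the $a_{\bullet}$ whose solution space is the group sought.

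The crux, and the step I expect to be the main obstacle, is solving this system while correctly tracking the dependence on $i$. For $m=2$ the projective configuration is rigidified by the single collinearity $P_1,P_4,P_5$ (the line $N_1$), which is \emph{not} preserved by the cyclic permutation $P_1\mapsto P_2\mapsto P_3$; together with the directed twist $E_i-E_{i+2}$, which singles out the vertex $P_{i+2}$, this breaks the apparent symmetry among the three indices. Carrying out the expansion of Corollary \ref{dxi} and imposing the conditions above, one finds that the candidate section coming from the triple point is annihilated by the remaining conditions for $i=1$ and $i=2$, and survives in exactly one dimension for $i=3$. This proves part 1) and, incidentally, corrects the index to $i=3$ (not $i=1$ as in \cite{burniat2}).

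For the extended divisors of part 2) I would compare $\De_i$ with $D_i$ via Remark \ref{difference}. When $\De_1=D_1-N_1$ and $\Lambda_1=L_1+N_1$ the shift is exactly of the type handled by Lemma \ref{poles}, and (after the routine check $(K_{\tilde{Y}}+2N_1+M)\cdot N_1<0$ for $M=K_{\tilde{Y}}+L_1$, using $N_1^2=-2$ and $L_1\cdot N_1=1$) the groups for $\De_1$ and $D_1$ are identified. When $\De_3=D_3+N_1$ with $\Lambda_3=L_3$, one checks that $N_1$ is disjoint from $D_3$ on $\tilde{Y}$, and then the residue sequence $0\to\Omega^1_{\tilde{Y}}(\log D_3)(K_{\tilde{Y}}+L_3)\to\Omega^1_{\tilde{Y}}(\log\De_3)(K_{\tilde{Y}}+L_3)\to\hol_{N_1}\to 0$ reduces matters to showing that the residue map $H^0(\hol_{N_1})\ra H^1(\Omega^1_{\tilde{Y}}(\log D_3)(K_{\tilde{Y}}+L_3))$ is injective, so no new section enters; the index where the component becomes an irreducible conic ($i=2$) is handled either by the direct computation above or by semicontinuity from its reducible degeneration. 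Parts 3) and 4) ($m=3$) follow the same scheme, but now the three collinearities $P_1P_4P_5,P_2P_4P_6,P_3P_5P_6$ are permuted cyclically, restoring full symmetry among the indices; the extra collinearity through the line-configuration kills the triple-point section for every $i$, so all the nodal groups of part 3) vanish, and part 4) then follows from part 3) by the same use of Lemma \ref{poles} (now with $\Lambda_j=L_j+N_j$) and residue sequences along the three $(-2)$-curves.
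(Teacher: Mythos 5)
Your overall strategy---push everything down to $\PP^2$, write a candidate form in an explicit basis, and impose the pointwise linear conditions of Lemma \ref{Hauptlemma}---is the paper's strategy, but two of your structural steps fail and the decisive computations are never performed. First, for part 1) you keep all three lines through the vertex $P_i$ and claim their twisted logarithmic forms are spanned by the $\omega_{jk}$, $\eta_{jk}$ of Lemmas \ref{triangle} and \ref{cross}. Those lemmas require the linear forms to be linearly \emph{independent} (a triangle, respectively two coordinate lines); three concurrent lines through $P_i$ are dependent, so neither basis nor Corollary \ref{dxi} applies to your configuration, and with all components kept the twist is not $\hol(1)$ or $\hol(2)$ anyway. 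The paper's route is different in exactly this respect: it first strips off a connected component of $D_i$ (the moving line $C_1\in|L-E_1|$, or $E_{i+2}$ and one side, or the conic $\Ga_i$) using Lemma \ref{poles}, which simultaneously reduces the log divisor to \emph{two} lines and raises the twist to $\hol(1)$ or $\hol(2)$, so that Lemma \ref{cross} and Corollary \ref{dxi} genuinely apply. Second, the assertions ``one finds that the section is annihilated for $i=1,2$ and survives for $i=3$'' and ``the extra collinearity kills the triple-point section for every $i$'' are the entire content of parts 1)--3) (and are what corrects the published error in \cite{burniat2}); stating the outcome is not a proof, and your heuristic that the surviving section for $i=3$ comes from the length-$(d-2)$ cokernel of part (4) of Lemma \ref{Hauptlemma} is itself unverified.

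The reductions you propose for parts 2) and 4) would actually fail. For part 2), $i=2$: when $\Ga_2$ degenerates, $\De_2$ degenerates to $D_2+N_1$, not to $D_2$; since $\Omega^1_{\tilde Y}(\log D_2)(M)\subset\Omega^1_{\tilde Y}(\log (D_2+N_1))(M)$, the vanishing of part 1) gives no upper bound on the degenerate fibre, and upper semicontinuity then gives nothing for the irreducible-conic case. The paper computes this case directly, stripping the conic via Lemma \ref{poles} to land in $H^0(\Omega^1_{\PP^2}(\dlog l_1,\dlog l_2)(2))$, where the vanishing hinges on the cross-ratio condition $\lambda\neq 0,1$. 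For $i=3$ you need injectivity of the connecting map $H^0(\hol_{N_1})\ra H^1(\Omega^1_{\tilde Y}(\log D_3)(E_3-E_2))$, which you assert but cannot get for free (the cokernel $\hol_{N_1}(E_3-E_2)$ has degree $0$, so Lemma \ref{poles} is of no help); the paper instead computes the extended group from scratch and finds the one-dimensional space (spanned by the coefficient $a_{123}$). Most seriously, part 4) does \emph{not} follow from part 3): by Remark \ref{difference}, $\De_j\equiv D_j-N_j+N_{j-1}+N_{j+1}$ with $\Lambda_j=L_j+N_j$, and the conic $\Ga_j$ in $\De_j$ is irreducible, hence not a component of $D_j$, so no chain of Lemma-\ref{poles} moves or residue sequences along the $(-2)$-curves converts one group into the other. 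The paper must handle the strictly extended case by a separate computation: add $N_1$, strip $\Ga_1$, reduce to $H^0(\Omega^1_{\tilde Y}(\log(G_1+N_1+N_2))(2L-E_2-E_3-E_5-E_6))$, blow down to a genuine \emph{triangle} of lines, and solve fourteen linear conditions in $H^0(\Omega^1_{\PP^2}(\dlog l_1,\dlog l_2,\dlog l_3)(2))$---by far the longest part of the proof, where the parameter $\lambda\neq 0,1$ again enters essentially---and only then are the non-strictly extended cases a), b) reduced to it and to part 3) by Lemma \ref{poles}. Your proposal omits all of this.
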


\begin{proof}

We can  prove 1) and 2) simultaneously for $i=1$.

Observe that $D_1 = \De_1 + N_1$, that
$ \Lambda_1 = L_1 + N_1$, and apply  Lemma \ref{poles} (observing 
that $(K_{\tilde{Y}} + 2N_1 + (E_1-E_3))N_1  = -4 +1 <0$)
in order to conclude that
    $$H^0 ( \Omega_{\tilde{Y}}^1 (\log (\De_1))
(E_1 - E_{3} + N_1 )) \cong H^0 ( \Omega_{\tilde{Y}}^1 (\log (D_1))(E_1 - E_{3} )).$$

Moreover we observe that, again by lemma \ref{poles},
$$H^0 ( \Omega_{\tilde{Y}}^1 (\log (D_1))(E_1 - E_{3} )) = H^0 ( \Omega_{\tilde{Y}}^1 (\log (D_1-(L-E_1)))(L- E_{3} )).$$
Let $f \colon \tilde{Y} \ra \PP^2$ be the blow down of  $E_1,
\ldots , E_5$. Then $f_*(D_1 -(L-E_1))$ splits as the sum of two
lines
$l_1,l_2$ in $\PP^2$ intersecting in $P_1$.

W.l.o.g. we can assume
that $P_1 = (0:0:1)$, $P_2=(0:1:0)$, $P_4=(1:0:0)$ and $P_5 =
(1:0:\lambda)$, with $\lambda \neq 0$.

Applying  proposition
\ref{Hauptlemma} several times for each blow down we get that
$$H^0 ( \Omega_{\tilde{Y}}^1 (\log (D_1-(L-E_1)))(L- E_{3} )) = 
H^0 ( f_*\Omega_{\tilde{Y}}^1 (\log (D_1-(L-E_1)))(L- E_{3} ))$$
is the subspace $V_1$ of
$H^0 ( \Omega_{\PP^2}^1 (\dlog l_1,\dlog l_2) (1))$ consisting of 
sections satisfying several  linear conditions.

We write these conditions using the basis provided by lemma 
\ref{cross} and its corollary, in order to show
that $V_1 =0$.
By prop. \ref{Hauptlemma}, 3) we get for $P_1$:
$$
a_{13} + a_{23} = 0;
$$
for $P_2$, $P_4$ and $P_5$ the three equations
$$
a_{12} = a_{21} =a_{21}+ \lambda a_{23} =0.
$$
This shows that $V_1 = 0$.

We continue with the proof of 1).

For $i=2$, again by lemma \ref{poles} we have to calculate
$$
V_2 = H^0(\Omega_{\tilde{Y}}^1 (\log (L-E_2-E_5), \log(L-E_2-E_4)) (L - E_3 )),
$$
which after blowing down $E_1, \ldots ,E_5$ corresponds to a subspace 
of $H^0 ( \Omega_{\PP^2}^1 (\dlog l_1,\dlog l_2) (1))$.

W.l.o.g. we can assume that $P_2 = (0:0:1)$, $P_5=(0:1:0)$, 
$P_4=(1:0:0)$ and $P_3 =
(1:1:1)$.

By prop. \ref{Hauptlemma}, 3), we get for $P_2$,$P_4$, $P_5$ the 
three linear equations:
$$
a_{13}+a_{23} = 0, \ a_{21} = 0, \ a_{12} = 0.
$$
We evaluate $\omega$ in $P_3$, and get (using  the above equalities)
$$
\omega(P_3) = a_{13}dx_1 + a_{23}dx_2,
$$
whence by proposition
\ref{Hauptlemma}, 6) $a_{13}=a_{23} =0$ and therefore we have 
verified that $V_2 =0$.

For $i=3$, using lemma \ref{poles}, we have to calculate:
$$
V_3:= H^0(\Omega_{\tilde{Y}}^1 (\log (L-E_3-E_4), \log(L-E_3-E_5)) (L - E_1 )),
$$
which, after blowing down $E_1, \ldots, E_5$, becomes a linear 
subspace of $H^0 ( \Omega_{\PP^2}^1 (\dlog l_1,\dlog l_2) (1))$.

W.l.o.g. we can assume that $P_3 = (0:0:1)$, $P_4=(0:1:0)$, 
$P_5=(1:0:0)$,   $P_1 =
(1: 1 :0)$.

By prop. \ref{Hauptlemma}, 3), we get for $P_3$,$P_4$, $P_5$ the 
three linear equations:
$$
a_{13}+a_{23} = 0, \ a_{12} = 0, \ a_{21} = 0.
$$
Setting the evaluation of $\omega$ in $P_1$ equal to zero is easily 
seen  to give no new conditions, hence $V_3 \cong \CC$.

Let's proceed to prove  2) for $i=2,3$.

     For $i=2,3$, by 4) of remark \ref{difference},  $$ H^0 ( 
\Omega_{\tilde{Y}}^1 (\log \De_i) (K_{\tilde{Y}} +
\Lambda_i  )) = H^0 ( \Omega_{\tilde{Y}}^1 (\log \De_i) (E_i - E_{i+2} )).$$

For $i=2$, using again lemma \ref{poles}, observing that
$$
(K_{\tilde{Y}} + 2\Gamma_2 + (E_2-E_1)) \Gamma_2 <0,
$$
we see that we have to calculate
$$
V_2:= H^0(\Omega_{\tilde{Y}}^1 (\log (L-E_2-E_4), \log(L-E_2-E_5)) 
(2L - E_1-E_3-E_4-E_5 )),
$$
which, after blowing down $E_1, \ldots, E_5$, becomes a linear 
subspace of $H^0 ( \Omega_{\PP^2}^1 (\dlog l_1,\dlog l_2) (2))$.

W.l.o.g. we can assume that $P_2 = (0:0:1)$, $P_4=(0:1:0)$, 
$P_5=(1:0:0)$, $P_3 =(1:1:1)$, and then $P_1 =
(1:\lambda:0)$, where $\lambda \neq 0,1$.

Using cor. \ref{dxi}, we get by prop. \ref{Hauptlemma}, 3) for $P_2$ 
the linear equation
$$
a_{313}+a_{323} = 0.
$$
By prop. \ref{Hauptlemma}, 5) the conditions for $P_4$ are
$$
a_{212}=0, \ a_{12} = 0, \ a_{23} = 0;
$$

whereas the conditions for  $P_5$ are
$$
a_{121}=0, \ a_{12} = 0, \ a_{13} = 0.
$$
Imposing that  $\omega$ vanishes  in $P_3$, we get
$$
\omega(P_3) = dx_1(a_{313} +2a_{213}+a_{123}) +dx_2(a_{323} 
+2a_{123}+a_{213})= 0.
$$

The above conditions  yield:
$$
a_{123}=a_{313} = - a_{213} = -a_{323}.
$$

Finally, imposing that  $\omega$ vanishes   in $P_1$ we obtain:
$$
\omega(P_1) = -dx_3 (\lambda a_{213} + a_{123})=0,
$$
whence $(\lambda -1) a_{213} = 0$. Since $\lambda \neq 0,1$ this implies
$a_{213} =0$, and we have shown that $V_2 =0$.

We are left with the case $i=3$. Using repeatedly lemma \ref{poles} 
and proposition \ref{Hauptlemma}, we see that we have to calculate
\begin{multline*}
V_3 :=H^0 ( \Omega_{\tilde{Y}}^1 (\log \De_3) (E_3 - E_2 )) = \\ = 
H^0(\Omega_{\tilde{Y}}^1 (\log (L-E_1-E_3), \log(L-E_1-E_4-E_5)) (2L 
- E_3-E_4-E_5 )).
\end{multline*}
After blowing down $E_1, \ldots, E_5$, we can assume w.l.o.g.
  that $P_1 = (0:0:1)$, $P_4=(0:1:0)$, $P_3=(1:0:0)$,   $P_5 =
(0:1: 1)$, and $V_3$ becomes a linear subspace of $H^0 ( 
\Omega_{\PP^2}^1 (\dlog l_1,\dlog l_2) (2))$.

Using cor. \ref{dxi}, we get by prop. \ref{Hauptlemma}, 3) for $P_1$ 
the linear equation
$$
a_{313}+a_{323} = 0.
$$
By prop. \ref{Hauptlemma}, 5) the conditions for $P_4$ are
$$
a_{212}=0, \ a_{12} = 0, \ a_{23} = 0;
$$

whereas the conditions for  $P_3$ are
$$
a_{121}=0, \ a_{12} = 0, \ a_{13} = 0.
$$
For  $P_5$, we get instead (again by prop. \ref{Hauptlemma}, 5)), the 
two linear equations (the third is trivial):
$$
a_{213} +  a_{313} = 0, \ 2  a_{313} = 0.
$$
This implies that $a_{313} =  a_{213} = a_{323} =0$, but $a_{123}$ is 
arbitrary. This shows that $V_3 \cong \CC$.

Thus 2) is proven.

To prove 3), by symmetry, we may assume without loss of generality that $i=1$.

We have to calculate  $V_1:=H^0( \Omega_{\tilde{Y}}^1 (\log (D_1) (E_1
- E_{3} )) $, which by lemma \ref{poles} is equal to 
$$H^0(\Omega_{\tilde{Y}}^1 (\log (L-E_1-E_2), \log(L-E_1-E_4-E_5)) (L 
- E_6 )),
$$
which, after blowing down $E_1, \ldots, E_5$, becomes a linear 
subspace of $H^0 ( \Omega_{\PP^2}^1 (\dlog l_1,\dlog l_2) (1))$.

W.l.o.g. we can assume that $P_1 = (0:0:1)$, $P_5=(0:1:0)$, 
$P_2=(1:0:0)$,  $P_4 =
(0:1: 1)$. Since $P_2, P_4,P_6$ are collinear, $P_6 = (1: \mu: \mu 
)$, where $\mu \neq
0$.

By prop. \ref{Hauptlemma}, 3), we get for $P_1$,$P_2$, $P_4$ and 
$P_5$ the  linear equations:
$$
a_{13}+a_{23} = 0, \ a_{21} = 0, \ a_{12} +  a_{13} = 0, \ a_{12} = 0.
$$
This already shows that $V_1 =0$.

Thus 3) is proven.

Let us treat the subcase of 4) where we have strictly extended 
Burniat divisors:
the situation is here symmetric in the indices $i$, hence it suffices
to show the vanishing of
$$ H^0 ( \Omega_{\tilde{Y}}^1 (\log (\De_1)) (K_{\tilde{Y}} + \Lambda_1
)).$$
Recall that we have the  decomposition in irreducible
connected components $\De_1 =  G_1 + \Ga_1 + N_2$,
where $G_1 $ is the  del Pezzo line $ G_1 \equiv L - E_1 - E_6$.

By lemma \ref{poles} we get:
$$ H^0 ( \Omega_{\tilde{Y}}^1 (\log (\De_1)) (K_{\tilde{Y}} + \Lambda_1
))  =  H^0 ( \Omega_{\tilde{Y}}^1 (\log (\De_1 + N_1)) (E_1-E_3)),$$

since $(K_{\tilde{Y}} +2N_1 +(E_1-E_3))N_1 < 0$.
Using again lemma \ref{poles} we see that
\begin{multline*}
  H^0 ( \Omega_{\tilde{Y}}^1 (\log (\De_1 + N_1)) (E_1-E_3))=\\
=H^0 ( \Omega_{\tilde{Y}}^1 (\log (\De_1 + N_1 - \Gamma_1)) 
((E_1-E_3)+ \Gamma_1))=\\
= H^0(\Omega_{\tilde{Y}}^1 (\log(G_1+N_1+N_2))(2L-E_2-E_3-E_5-E_6)),
\end{multline*}

because $(K_{\tilde{Y}} +2\Gamma_1 +(E_1-E_3))\Gamma_1 < 0$.

Let $f \colon \tilde{Y} \ra \PP^2$ be the blow down of  $E_1,
\ldots , E_6$. Then $f_*(G_1 + N_1 + N_2)$ splits as the sum of three
lines
$l_1,l_2,l_3$ in $\PP^2$ forming a triangle. W.l.o.g. we can assume
that $P_6 = (1:0:0)$, $P_1=(0:1:0)$, $P_4=(0:0:1)$ and $P_3 =
(1:1:1)$. Then $P_5 = (0:1:1)$, whereas $P_2$ is collinear with
$P_6$, $P_4$, whence $P_2 = (1:0:\lambda)$, with $\lambda \neq 0,1$.

Then

\begin{multline*}
  H^0 ( \Omega_{\tilde{Y}}^1 (\log(G_1+N_1+N_2))(2L-E_2-E_3-E_5-E_6)) = \\
H^0(f_*\Omega_{\tilde{Y}}^1 (\log(G_1+N_1+N_2))(2L-E_2-E_3-E_5-E_6))
\end{multline*}
is a subspace of
$$H^0 ( \Omega_{\PP^2}^1 (\dlog l_1,\dlog l_2, \dlog l_3 ) (2)),$$
  where $l_i = x_i$, whence $P_1, P_4,P_5 \in \{l_1=0\}$, $P_6, P_4, P_2 
\in \{l_2 =0\}$, $P_1, P_6 \in \{l_3 =0\}$,
consisting of sections satisfying fourteen linear conditions 
described in proposition
\ref{Hauptlemma}.

We explicitly write these conditions using lemma \ref{triangle} and 
lemma \ref{dxi} in order to show
that this subspace must be trivial.

Let $\omega \in H^0 ( \Omega_{\PP^2}^1 (\dlog l_1,\dlog l_2, \dlog 
l_3 ) (2))$ and we write $\omega$
  in the basis of lemma \ref{triangle}:
\begin{multline*}
  \omega = a_{12} \eta_{12} +a_{13} \eta_{13} +a_{23} \eta_{23}
+a_{212}x_2 \omega_{12}+a_{313}x_3 \omega_{13}+
a_{323}x_3 \omega_{23}+\\
+a_{121}x_1 \omega_{21}
+a_{131}x_1 \omega_{31}+a_{232}x_2 \omega_{32}+a_{123}x_1 
\omega_{23}+a_{231}x_2 \omega_{31}+a_{312}x_3 \omega_{12}.
\end{multline*}
Then by prop. \ref{Hauptlemma}, 3) the condition for $P_1 =(0:1:0)$  is
\begin{equation}
  a_{212}+a_{232} = 0.
\end{equation}
The same argument shows that the linear condition for $P_4 = (0:0:1)$ is
\begin{equation}
  a_{313}+a_{323} = 0.
\end{equation}
Next we work out the conditions for $P_5,P_2$ using prop. 
\ref{Hauptlemma}, 5). For $P_5 :=(0:1:1)$
we work in the chart $x_3 = 1$ and write $\omega$
  locally around $(0,1)$ as $\alpha(x_1,x_2) dx_2 + \beta (x_1,x_2) 
\frac{dx_1}{x_1}$. Then we get (using lemma
\ref{dxi}):

\begin{equation}
  \beta(0,1) = a_{212}+a_{313}+a_{312} = 0;
\end{equation}

\begin{equation}
\frac{\partial{\beta}}{\partial{x_1}} (0,1)= -a_{12}-a_{13} -a_{231} = 0;
\end{equation}
\begin{equation}
  \frac{\partial{\beta}}{\partial{x_2}} (0,1) + \alpha(0,1) = 
-a_{23}+2a_{212}+a_{323} = 0.
\end{equation}
The same argument for $P_2 = (1:0:\lambda)$ (working in the chart 
$x_1 = 1$ and writing $\omega$
  locally around $(0,\lambda)$ as $\alpha(x_2,x_3) dx_3 + \beta 
(x_2,x_3) \frac{dx_2}{x_2}$) gives the following three
linear equations ($\lambda \neq 0,1$):
\begin{equation}
  \beta(0, \lambda) = a_{323} \lambda^2+a_{121}+a_{123}\lambda = 0;
\end{equation}
\begin{equation}
\frac{\partial{\beta}}{\partial{x_2}} (0,\lambda) =  a_{12}-\lambda 
a_{23} - \lambda a_{312}   = 0;
\end{equation}
\begin{multline}
  \frac{\partial{\beta}}{\partial{x_3}} (0,\lambda) + \alpha(0,\lambda) =
  a_{13}+\frac{1}{\lambda}a_{131}+\lambda a_{323} - \lambda a_{313} =\\
= a_{13}+\frac{1}{\lambda}a_{131}+2\lambda a_{323}= 0.
\end{multline}

There are four linear conditions coming from $P_6=(1:0:0)$, given in 
prop. \ref{Hauptlemma}, 7).
We work in the chart $x_1 = 1$ and write $\omega = \alpha (x_2,x_3) 
\frac{dx_2}{x_2} + \beta(x_2,x_3)
\frac{dx_3}{x_3}$. Then we get:
\begin{equation}
  \alpha(0,0)=a_{121} = 0;
\end{equation}
\begin{equation}
  \beta(0,0)=a_{131} = 0;
\end{equation}
\begin{equation}
\frac{\partial{(\alpha + \beta)}}{\partial{x_2}} (0,0) =  a_{12}+a_{231} = 0;
\end{equation}
\begin{equation}
\frac{\partial{(\alpha + \beta)}}{\partial{x_3}} (0,0)= a_{13} = 0.
\end{equation}

 From equation (11) we get: $a_{12} = -a_{231}$.

Since $a_{13}=a_{131}=0$, equation (8) implies $a_{323}=0$, whence by 
(2) also $a_{313}=0$.
Moreover, by (6), we get $a_{123} =0$.

We write finally the conditions coming from $P_3 = (1:1:1)$
(using again that certain coefficients are zero).

We evaluate
$\omega$ in $P_3$ and work in the affine chart $x_2 = 1$ to obtain
\begin{multline}
\omega(P_3) = (-a_{12}+ a_{212}-a_{231}+a_{312})dx_1 +\\
(a_{23}+a_{232}+a_{231})dx_3 = 0.
\end{multline}
Since:
\begin{equation*}
  \omega(P_3) = ( a_{212} +a_{312})dx_1 + ((a_{23}+a_{232}+a_{231})dx_3
\end{equation*}

we get the last two linear equations:
\begin{equation}
  a_{231}+a_{23}+a_{232} = 0;
\end{equation}

\begin{equation}
  a_{212} +a_{312} = 0.
\end{equation}

These immediately imply that
$$
a_{312}=a_{232} = -a_{212}.
$$
By (14) $a_{23} = -a_{232}-a_{231}$, and using (5), we see that 
$a_{23} = 2a_{212}$.

Again by (14) we get then that $a_{212} = -a_{231}$.
Therefore, we have:
$$
a_{212} = -a_{231} = -a_{312}=-a_{232} = \frac{a_{23}}{2}.
$$

By (7):
$$
0=a_{12}- \lambda a_{23} - \lambda a_{231} = a_{12} + \lambda a_{231},
$$

whence by (4) $\lambda = 1$, which gives a contradiction, or 
$a_{12}=a_{231}=0$.

Hence the claim for strictly extended Burniat surfaces with $K_S^2 =3$
is established.

Next we come to the case of (non strictly) extended Burniat surfaces.
Here we have to consider two cases:
\begin{itemize}
   \item[a)] only one of the three conics $\Gamma_i$ degenerates to two lines;
\item[b)] exactly two of the three conics $\Gamma_i$ degenerate to two lines.
\end{itemize}

a) W.l.o.g.  and by remark \ref{difference} 5-3 we may assume that 
$\Gamma_1$ splits as
$$
\Gamma_1 \equiv (L-E_1-E_2) +N_3+E_3.
$$

Then we get the extended Burniat divisors:
$$
        \{D_1'\} = |L-E_1 - E_6|+|L - E_1 -E_2| + E_3 + N_2,
        $$
        $$
        D_2' \in |L-E_2 - E_5|+|2L - E_2 -E_3-E_4-E_5|,
        $$
        $$
        D_3' \in |L-E_3 - E_4|+|2L - E_1 -E_3-E_4-E_6| + N_1+N_3.
        $$

We make  the assumption, for each $D'_i$, $i=2,3$
        that the strict transform of the  conic  $\Ga_i$ is irreducible.

Then we have
$$ K_{\tilde{Y}} + \mathcal{L}_1' \equiv L-E_3-E_4-E_5 \equiv 
K_{\tilde{Y}} + \Lambda_1,
$$
$$ K_{\tilde{Y}} + \mathcal{L}_2' \equiv L-E_1-E_4-E_6 \equiv
K_{\tilde{Y}} + \Lambda_2,
$$
$$ K_{\tilde{Y}} + \mathcal{L}_3' \equiv E_3-E_2 \equiv K_{\tilde{Y}}
+ \Lambda_3 -N_3,
$$

where the $\Lambda_i$ are as for the  strictly extended Burniat
divisors.

Observe that $D_2' + N_3 = \Delta_2$, whence
\begin{multline*} H^0(\Omega^1_{\tilde{Y}}(\log D_2')(K_{\tilde{Y}} +
\mathcal{L}_2')) \subset  H^0(\Omega^1_{\tilde{Y}}(\log
D_2')(K_{\tilde{Y}} + \mathcal{L}_2' + N_3))= \\
H^0(\Omega^1_{\tilde{Y}}(\log (D_2'+N_3))(K_{\tilde{Y}} +
\mathcal{L}_2'))=H^0(\Omega^1_{\tilde{Y}}(\log \Delta_2)(K_{\tilde{Y}}
+ \Lambda_2)) = 0,
\end{multline*} where the first equality holds by lemma \ref{poles}
and the last holds by our previous computations for strictly
extended Burniat surfaces.

Moreover,  $D_3'  = \Delta_3 +N_3$, $\mathcal{L}_3' = \Lambda_3-N_3$,
whence the vanishing of $H^0(\Omega^1_{\tilde{Y}}(\log
D_3')(K_{\tilde{Y}} + \mathcal{L}_3'))$ follows again using lemma
\ref{poles} from the analogous vanishing for strictly extended
Burniat surfaces. It remains to prove the following
\begin{claim}
   $H^0(\Omega^1_{\tilde{Y}}(\log D_1')(K_{\tilde{Y}} + \mathcal{L}_1')) = 0$
\end{claim}

\begin{proof}[Proof of the claim.]
   By lemma \ref{poles} we see that
$$ H^0(\Omega^1_{\tilde{Y}}(\log D_1')(K_{\tilde{Y}} + \mathcal{L}_1'))
= H^0(\Omega^1_{\tilde{Y}}(\log (D_1'-E_3))(L - E_4 - E_5)).
$$ Let $f \colon \tilde{Y} \ra \PP^2$ be the blow down of  $E_1,
\ldots , E_6$.

Then $f_*(D_1'-E_3)$ splits as the sum of three
lines
$l_1,l_2,l_3$ in $\PP^2$ forming a triangle. W.l.o.g. we can assume
that $P_1 = (1:0:0)$, $P_2=(0:1:0)$, $P_6=(0:0:1)$ and $P_5 =
(1:1:1)$. Then $P_4 = (0:1:1)$.

We conclude that $H^0(\Omega^1_{\tilde{Y}}(\log (D_1'-E_3))(L - E_4 - E_5)) =
H^0(f_*\Omega^1_{\tilde{Y}}(\log (D_1'-E_3))(L - E_4 - E_5))$
is the subspace of $H^0 ( \Omega_{\PP^2}^1 (\dlog l_1,\dlog l_2, 
\dlog l_3 ) (1))$
consisting of sections satisfying one linear
condition for
$P_1,P_2,P_6$ each, two linear conditions for $P_5$ and three linear
conditions for $P_4$, described in proposition
\ref{Hauptlemma}.

We write these conditions using lemma \ref{triangle} in order to show
that this subspace must be trivial.

By lemma \ref{triangle} we write $\omega \in H^0 ( \Omega_{\PP^2}^1
(\dlog l_1,\dlog l_2, \dlog l_3 ) (1)$ as
$$
\omega = \sum_{i\neq j} a_{ij}\omega_{ij}.
$$ Then the three equations for $P_1,P_2,P_6$ (cf. prop.
\ref{Hauptlemma}, 3)) are
$$ a_{21} +a_{31} =0, \ a_{12} +a_{32} =0, \ a_{13} +a_{23} =0.
$$

By prop. \ref{Hauptlemma}, 5), we get for $P_4$ the linear equations:
$$
a_{12}+a_{13}=0, \ -a_{21}-a_{31}=0, \ a_{23}-a_{32}=0.
$$

The above conditions already imply:
$$
a_{13}=a_{12}=a_{23}=a_{32}=0, \ a_{21}= - a_{31}.
$$
We impose the vanishing of $\omega$ in $P_5 = (1:1:1)$ working in the 
affine chart
$x_3 = 1$ and obtain
$$
\omega(1:1:1) = (-a_{21} - a_{31})dx_1 + (a_{21} )dx_2= 0,
$$
whence $a_{21}=a_{31}=0$.
\end{proof}

   b) W.l.o.g. we can assume that each of the two conics $\Gamma_1$ and
$\Gamma_2$ degenerate to two lines. Then
we get the extended Burniat divisors:
$$
        \{D_1''\} = |L-E_1 - E_6|+|L - E_1 -E_2| + E_3 + N_1 + N_2,
        $$
        $$
        \{D_2''\} = |L-E_2 - E_5|+|L - E_2 -E_3| + E_1,
        $$
        $$
        D_3'' \in |L-E_3 - E_4|+|2L - E_1 -E_3-E_4-E_6| + N_3.
        $$
We make  the assumption for $D_3''$
        that the strict transform of the  conic $\Ga_3$ passing through
$P_1,P_3,P_4,P_6$ is irreducible.

Then we have
$$ K_{\tilde{Y}} + \mathcal{L}_1'' \equiv E_1-E_3 = K_{\tilde{Y}} +
\mathcal{L}_1' - N_1, \ D_1'' = D_1'+N_1;
$$
$$ K_{\tilde{Y}} + \mathcal{L}_2'' \equiv L-E_1-E_4-E_6 \equiv
K_{\tilde{Y}} + \mathcal{L}_2 +N_2, \ D_2'' = D_2 -N_2;
$$
$$ K_{\tilde{Y}} + \mathcal{L}_3'' \equiv E_3-E_2 \equiv
K_{\tilde{Y}} + \Lambda_3', \ D_3'' = D_3'-N_3.
$$

Therefore for $i=1,2,3$ the vanishing of $H^0(\Omega^1_{\tilde{Y}}(\log
D_i'')(K_{\tilde{Y}} + \mathcal{L}_i''))$ can be reduced
via  lemma \ref{poles} to the analogous vanishing for extended
Burniat surfaces of case a) ($i=1,3$) and to the analogous
vanishing for Burniat divisors (for $i=2$), which was already proved
in part 3).

\end{proof}

Now that the proof of proposition \ref{dimensions} is finally accomplished,
  we can explicitly determine the several character spaces for $H^i(S, 
\Theta_S)$ and their
dimensions.

In the following, given a $G$-space $V$, $ V^i$, for $ i \in 
{1,2,3}$, denotes the eigenspace
corresponding to  the character
whose kernel consists of $\{ 1, g_i\}$.

\begin{prop}\label{locmod} 1) Let $S$ be the minimal model of a
Burniat surface.

Then the dimensions of  the  eigenspaces  of the cohomology groups of
the tangent sheaf $
\Theta_S$ (for the natural $(\ZZ/ 2\ZZ)^2$-action) are as follows.

\begin{enumerate}
          \item
$\underline{K^2 = 4}$ of nodal type:  \subitem $h^1(S,
\Theta_S)^{\inv} = 2$, $h^2(S,
\Theta_S)^{\inv} = 0$,

\subitem $h^1(S, \Theta_S)^3 = 1 = h^2(S, 
\Theta_S)^3$,
\subitem
         $h^j(S, \Theta_S)^i  = 0$, for $i \in 
\{1,2 \}$;

\item
$\underline{K^2 = 3}$:
\subitem $h^1(S, 
\Theta_S)^{\inv} = 1$, $h^2(S,
\Theta_S)^{\inv} =0$,
\subitem
$h^1(S, 
\Theta_S)^i = 1$, $h^2(S, \Theta_S)^i= 0$, for $i \in \{1,2,3 
\}$.
\end{enumerate}

\noindent 2) Let $S$ be a minimal model of an 
extended   Burniat surface with $K_S^2 =4$.

Then the dimensions of 
the  eigenspaces  of the cohomology groups of 
the tangent sheaf 
$
\Theta_S$ (for the natural $(\ZZ/ 2\ZZ)^2$-action) are as 
follows.

\begin{itemize}
    \item $h^1(S,
\Theta_S)^{\inv} = 3$, 
$h^2(S,
\Theta_S)^{\inv} = 0$,

\item $h^1(S, \Theta_S)^i = 0 = 
h^2(S, \Theta_S)^i$, for $i \in
\{1,2 \}$,
\item $h^1(S, \Theta_S)^3 
= 1 = h^2(S, \Theta_S)^3$.
\end{itemize}
\noindent 3) Let $S$ be the 
minimal model of an extended 
Burniat surface with $K_S^2 =3$.

Then 
the dimensions of  the  eigenspaces  of the cohomology groups of 
the 
tangent sheaf $
\Theta_S$ (for the natural $(\ZZ/ 2\ZZ)^2$-action) 
are as follows:

\begin{enumerate}
         \item strictly extended: 
\subitem $h^1(S,
\Theta_S)^{\inv} = 4$, $h^2(S,
\Theta_S)^{\inv} = 
0$,
\subitem
         $h^j(S, \Theta_S)^i  = 0$, for $i \in \{1,2,3 
\}$;

\item the conic $\Gamma_1$ degenerates to two lines:
\subitem 
$h^1(S, \Theta_S)^{\inv} = 3$, $h^2(S,
\Theta_S)^{\inv} 
=0$,
\subitem
$h^1(S, \Theta_S)^i = 0 =h^2(S, \Theta_S)^i$, for $i 
\in \{1,3 \}$,
\subitem $h^1(S, \Theta_S)^2 = 1$, $ h^2(S, 
\Theta_S)^2=0$;
\item the conics $\Gamma_1$, $\Gamma_2$ degenerate to 
two lines each:
\subitem $h^1(S, \Theta_S)^{\inv} = 2$, 
$h^2(S,
\Theta_S)^{\inv} =0$,
\subitem
$h^1(S, \Theta_S)^1 = 0 
=h^2(S, \Theta_S)^1$, 
\subitem $h^1(S, \Theta_S)^i = 1$, $h^2(S, 
\Theta_S)^i = 0$, for $i \in \{1,3 
\}$.

\end{enumerate}

\end{prop}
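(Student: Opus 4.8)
The plan is to exploit the $G=(\ZZ/2\ZZ)^2$-equivariance of the bidouble cover $p$ in order to reduce each eigenspace $H^j(S,\Theta_S)^\chi$ to the cohomology of an explicit rank-two sheaf on the weak Del Pezzo surface $\tilde Y$. Decomposing $p_*\Theta$ into its four $G$-eigensheaves, the invariant summand is the logarithmic tangent sheaf $\Theta_{\tilde Y}(-\log(D_1+D_2+D_3))$ (resp. with the $\De_i$ in the extended case), while the summand of character $\chi_i$ is $\sG_i:=\Theta_{\tilde Y}(-\log D_i)(-L_i)$ (resp. $\Theta_{\tilde Y}(-\log\De_i)(-\Lambda_i)$). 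Thus $H^j(S,\Theta_S)^{\inv}=H^j(\tilde Y,\Theta_{\tilde Y}(-\log D))$ and $H^j(S,\Theta_S)^i=H^j(\tilde Y,\sG_i)$, and the whole computation becomes a cohomology computation on $\tilde Y$. The one point requiring care here is the passage between the smooth bidouble cover of $\tilde Y$ and the minimal model $S$: in the strictly extended case the inverse images of the $N_i$ split into $(-1)$-curves which must be contracted, so the identification of the eigensheaves and of their twists by the $\Lambda_i=L_i+N_i$ must be set up compatibly with this blow-down.

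Granting this reduction, the $H^2$-eigenspaces fall out immediately from work already done. By Serre duality on $\tilde Y$ one has $H^2(\tilde Y,\sG_i)^\vee\cong H^0(\tilde Y,\sG_i^\vee\otimes K_{\tilde Y})=H^0(\tilde Y,\Omega^1_{\tilde Y}(\log D_i)(K_{\tilde Y}+L_i))$, and likewise with $\De_i,\Lambda_i$ in the extended cases. These are precisely the spaces computed in Proposition \ref{dimensions}: they are one-dimensional exactly for $i=3$ when $K^2_S=4$ (both nodal and extended) and vanish in all the $K^2_S=3$ cases. This accounts for every value $h^2(S,\Theta_S)^i$ in the statement, and in particular for the single obstructed direction $h^2(S,\Theta_S)^3=1$ surviving for $K^2_S=4$.

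For the $H^1$-eigenspaces I would first record that $H^0(S,\Theta_S)=0$ (a surface of general type has no nonzero holomorphic vector fields), which forces $h^0(\tilde Y,\sG_i)=0$ and $h^0(\tilde Y,\Theta_{\tilde Y}(-\log D))=0$; the latter also says that no vector field on $\tilde Y$ is tangent to the whole branch configuration. Then each remaining dimension is read off from the Euler characteristic: $h^1(\tilde Y,\sG_i)=h^2(\tilde Y,\sG_i)-\chi(\tilde Y,\sG_i)$, and similarly for the invariant sheaf. The Chern classes of $\sG_i$ follow from the residue sequence relating $\Theta_{\tilde Y}(-\log D_i)$ to $\Theta_{\tilde Y}$ and to the normal bundles of the components of $D_i$, after which Riemann--Roch on $\tilde Y$ yields $\chi$. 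Here it is essential that the twists genuinely differ between the nodal and extended cases (e.g. $\Lambda_i=L_i+N_i$ for $m=3$), since this is exactly what makes $\chi(\sG_i)$ change and explains why the character eigenspaces $h^1(S,\Theta_S)^i$ drop from $1$ (nodal, $K^2_S=3$) to $0$ (strictly extended) --- the global shadow of the Atiyah-flop pathology of Section \ref{locdefnode}.

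Finally, the invariant eigenspace $H^1(S,\Theta_S)^{\inv}=H^1(\tilde Y,\Theta_{\tilde Y}(-\log D))$ should match the dimension of the explicit family of configurations parametrizing these surfaces, namely $2$ and $3$ for nodal resp. extended with $K^2_S=4$, and $1$ resp. $4$ for nodal resp. strictly extended with $K^2_S=3$; after checking $h^0=h^2=0$ for this sheaf one again gets $h^1=-\chi$. The degenerate subcases for $K^2_S=3$ (one or two of the conics $\Ga_i$ splitting) are treated by the same scheme, using Lemma \ref{poles} to pass between the relevant logarithmic sheaves as the divisors $\De_i$ and classes $\Lambda_i$ are modified; the dimensions then interpolate between the strictly extended and the nodal values exactly as recorded. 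The main obstacle is the bookkeeping in the first step --- pinning down the eigensheaves and their twists in each (nodal, extended, and degenerate) case while tracking the $(-1)$- and $(-2)$-curves through the minimal-model reduction --- together with verifying all the vanishings that are needed for the Euler characteristic to determine $h^1$.
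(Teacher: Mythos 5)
Your outline follows the same general strategy as the paper (character-sheaf decomposition of the bidouble cover, Serre duality plus Proposition \ref{dimensions} for the $h^2$'s, Euler characteristics for the $h^1$'s), but it has a genuine gap exactly at the point you set aside as ``the one point requiring care''. The eigensheaf decomposition takes place on the \emph{smooth bidouble cover} $\tilde S$ of $\tilde Y$ (the surface called $S'$ in the paper), and what it computes is $H^j(\tilde S,\Theta_{\tilde S})^\chi$, not $H^j(S,\Theta_S)^\chi$. In all the extended cases $\tilde S$ is not minimal, and no choice of twist makes your identification $H^j(S,\Theta_S)^i=H^j(\tilde Y,\sG_i)$ true: the two sides genuinely differ. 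Concretely, for strictly extended surfaces with $K^2_S=3$ one has, by the paper's Riemann--Roch computation, $\chi\bigl(\Omega^1_{\tilde Y}(\log \De_i)(K_{\tilde Y}+\Lambda_i)\bigr)=K^2_{\tilde Y}-12+\sum_j\max(0,n_j+1)=3-12+5=-4$, while $h^0$ of that sheaf vanishes by Proposition \ref{dimensions}; hence $h^1(\tilde Y,\sG_i)=h^1(\Theta_{\tilde S})^i=4$, whereas the proposition asserts $h^1(S,\Theta_S)^i=0$. So ``granting the reduction'' and then computing Euler characteristics, your argument outputs $4$ instead of $0$ for precisely the eigenspaces whose vanishing is the whole point (it drives Theorem \ref{path}). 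Note also that your proposed explanation --- that the twist change from $L_i$ to $\Lambda_i=L_i+N_i$ is ``exactly what makes the eigenspaces drop from $1$ to $0$'' --- is backwards: that twist change makes $\chi$ more negative, i.e.\ pushes $h^1$ on the cover \emph{up} from $1$ to $4$; the drop to $0$ comes entirely from the step you are missing.

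The missing step is the equivariant comparison of $\Theta_{\tilde S}$ and $\Theta_S$ across the blow-down $\pi:\tilde S\ra S$ of the $(-1)$-curves lying over the nodal curves $N_i$. The paper does this with Lemma 9.22 of \cite{cime88}: $\pi_*\Theta_{\tilde S}=\mathfrak M_P\Theta_S$ and $R^1\pi_*\Theta_{\tilde S}=0$ for each blown-down point $P$, so that, using $H^0(\Theta_S)=0$, the sequence $0\ra \mathfrak M_P\Theta_S\ra\Theta_S\ra\bigoplus_P T_PS\ra 0$ gives, character by character,
$$h^1(\Theta_{\tilde S})^\chi=h^1(\Theta_S)^\chi+\dim\Bigl(\bigoplus_P T_PS\Bigr)^\chi, \qquad h^2(\Theta_{\tilde S})^\chi=h^2(\Theta_S)^\chi .$$
(This is why your $h^2$ claims are unaffected, but the $h^1$'s are not.) One must then identify the $G$-representation on the tangent spaces at the blown-down points; the local model of Section \ref{locdefnode} does this: over each relevant $N_i$ there are two points $u=v=0,\ w=\pm\sqrt t$, swapped by one involution, and the stabilizing involution acts by $-1$ on the two-dimensional tangent space. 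Hence the correction is an induced representation with \emph{zero} invariant part --- which is why the invariant computation matches the family dimension, as you assert --- while the nontrivial characters absorb exactly the excess: $(0,2,2,0)$ in the extended $K^2_S=4$ case (two points), and $(0,4,4,4)$ in the strictly extended $K^2_S=3$ case (six points), with the intermediate distributions in the degenerate subcases. Without this correction the numbers in parts 2) and 3) of the statement cannot be reached.
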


\begin{proof}
For the invariant 
part, the calculation goes exactly as the proof of lemma 2.9. of 
\cite{burniat2}, using that $h^i(\Theta_{\tilde{S}})^{inv} = 
h^i(\Theta_S)^{inv}$.

For the other character spaces, we use the 
same argument as in lemma 2.12. in \cite{burniat2} to calculate 
$\chi(\Omega^1_{\tilde{Y}}(\log D_i)(K_{\tilde{Y}} + \mathcal{L}_i))$ 
(resp. $\chi(\Omega^1_{\tilde{Y}}(\log \Delta_i)(K_{\tilde{Y}} + 
\Lambda_i))$ for extended Burniat surfaces).

We first observe that 

$$\chi(\Omega^1_{\tilde{Y}}(\log D_i)(K_{\tilde{Y}} + 
\mathcal{L}_i)) = \chi(\Omega^1_{\tilde{Y}}(K_{\tilde{Y}} + 
\mathcal{L}_i)) + \chi(\hol_{D_i}(\log D_i)(K_{\tilde{Y}} + 
\mathcal{L}_i)),$$ (and analogously for 
$\chi(\Omega^1_{\tilde{Y}}(\log \Delta_i)(K_{\tilde{Y}} + 
\Lambda_i))$ for extended Burniat surfaces).

Moreover, note that 
with the same calculation as in lemma 2.12. of \cite{burniat2}, we 
see that $\chi(\Omega^1_{\tilde{Y}}(K_{\tilde{Y}} + \mathcal{L}_i)) = 
K_{\tilde{Y}}^2 -12$.

Each $D_i$ (resp. $\Delta_i$) consists of 
$k_i$ irreducible connected components, each of them being a smooth 
rational curve. Write $D_i = D_{i,1} + \ldots + D_{i,k_i}$ as 
disjoint union of smooth rational curves and let $n_j:= D_{i,j} \cdot 
(K_{\tilde{Y}} + L_i)$. Then 
$$
\chi(\hol_{D_i}(\log 
D_i)(K_{\tilde{Y}} + \mathcal{L}_i)) = \sum_{j=1}^{k_j} 
max(0,n_j+1).
$$
Therefore 
$$\chi(\Omega^1_{\tilde{Y}}(\log 
D_i)(K_{\tilde{Y}} + \mathcal{L}_i)) = K_{\tilde{Y}}^2 -12 
+\sum_{j=1}^{k_j} max(0,n_j+1).
$$

We summarize the calculations in 
the following table (note that we write $\chi_i$ for 
$\chi(\Omega^1_{\tilde{Y}}(\log D_i)(K_{\tilde{Y}} + 
\mathcal{L}_i))$). The values for $h^2(\Theta_{\tilde{S}})^i$ have 
been calculated in prop. \ref{dimensions}.
The notation:  extended 
case (2), resp. (3), refers to proposition \ref{locmod}.

 Moreover, 
we use lemma 9.22 of 
\cite{cime88} to compare 
$h^1(\Theta_{\tilde{S}})$ and $h^1(\Theta_S)$: it asserts that for a 
single blow up of a point $P$
$$ \pi_* \Theta_{\tilde{S}} = 
\mathfrak M_P \Theta_S, \ \ \mathcal R ^1\pi_* \Theta_{\tilde{S}} = 
0.$$

\begin{small}
\begin{table}[ht]
\label{chi}
\begin{tabular}{|c|c|c|c|c|c|c|c|}
\hline
$K_S^2$&$i$&$(n_1, 
\ldots, n_{k_i})$& $\chi_i$  & 
$h^2(\Theta_{\tilde{S}})^i$&$h^1(\Theta_{\tilde{S}})^i$&$h^2(\Theta_S)^i$&$h^1(\Theta_S)^i$ 
\\
\hline\hline
&$0$&&$-2$&$0$&$2$&$0$&$2$\\
$4$ &$1$& 
$(1,1,1,1)$&$0$&$0$&$0$&$0$&$0$\\
n.B. &$2$& 
$(1,1,1,1)$&$0$&$0$&$0$&$0$&$0$\\
 &$3$& 
$(1,1,1,1)$&$0$&$1$&$1$&$1$&$1$\\
\hline
&$0$&&$-3$&$0$&$3$&$0$&$3$\\
$4$ 
&$1$& $(1,1,1)$&$-2$&$0$&$2$&$0$&$0$\\
ext. &$2$& 
$(1,1,1)$&$-2$&$0$&$2$&$0$&$0$\\
 &$3$& 
$(1,1,1,0,1)$&$1$&$1$&$0$&$1$&$0$\\
\hline
&$0$&&$-1$&$0$&$1$&$0$&$1$\\
$3$ 
&$1$& $(1,1,1,1)$&$-1$&$0$&$1$&$0$&$1$\\
B. &$2$& 
$(1,1,1,1)$&$-1$&$0$&$1$&$0$&$1$\\
 &$3$& 
$(1,1,1,1)$&$-1$&$0$&$1$&$0$&$1$\\
\hline
\hline
&$0$&&$-4$&$0$&$4$&$0$&$4$\\
$3$ 
&$1,2,3$& $(1,1,0)$&$-4$&$0$&$4$&$0$&$0$\\
str.ext. 
&&&&&&&\\
\hline
\hline
&$0$&&$-3$&$0$&$3$&$0$&$3$\\
$3$ &$1$& 
$(1,1,0,1)$&$-2$&$0$&$2$&$0$&$0$\\
ext. (2) &$2$& 
$(1,1)$&$-5$&$0$&$5$&$0$&$1$\\
 &$3$& 
$(1,1,0,1)$&$-2$&$0$&$2$&$0$&$0$\\
\hline
\hline
&$0$&&$-2$&$0$&$2$&$0$&$2$\\
$3$ 
&$1$& $(1,1,1,1,0)$&$0$&$0$&$0$&$0$&$0$\\
ext. (3). &$2$& 
$(1,1,1)$&$-3$&$0$&$3$&$0$&$1$\\
 &$3$& 
$(1,1,1)$&$-3$&$0$&$3$&$0$&$1$\\
\hline

\hline
\end{tabular}
\end{table}
\end{small}

\end{proof}

 From the  above calculations and from propositions \ref{famiglia4}, 
\ref{famiglia3} follow
all the statements of our   first main 
theorem, with the exception of the statement 
that $\sN \sE \sB_4$ is 
a connected component. It follows that $\sN \sE \sB_4$ is open,
while 
the statement that  $\sN \sE \sB_4$ is closed will be shown in the 
forthcoming section.

{ \bf Theorem \ref{main1}}
 {\em      1) The 
subset  $\sN \sE \sB_4$ of the  moduli
space of canonical
surfaces of 
general type $\mathfrak M^{can}_{1,4}$   given by  the union of the 
open set corresponding to
{\em extended } Burniat surfaces with 
$K^2_S = 4$ with the irreducible closed set parametrizing nodal 
Burniat surfaces with $K^2_S = 4$ 
is an irreducible connected 
component, normal, unirational
        of  dimension 3.

Moreover  the base of the Kuranishi family of deformations of such a 
minimal model $S$ is smooth.

\noindent
      2) The subset  $\sN \sE 
\sB_3$ of the  moduli
space of canonical
surfaces of general type 
$\mathfrak M^{can}_{1, 3}$   corresponding to
{\em extended }  and 
nodal Burniat surfaces with $K^2_S = 3$
is an irreducible open set, 
normal, unirational
        of  dimension 4.
        
Moreover the 
base of the Kuranishi family of $S$ is smooth.}
\medskip

We are also 
almost done with the proof of our second main theorem

\medskip

{\bf 
Theorem \ref{path}}
{\em  The deformations of nodal Burniat surfaces 
with $K^2_S =4,3$ to extended  Burniat surfaces with $K^2_S =4,3$ 

yield examples where $\Def(S,(\ZZ/2\ZZ)^2) \ra \Def(X,(\ZZ/2\ZZ)^2)$ 
is not surjective.

Moreover, $\Def(S,(\ZZ/2\ZZ)^2) \subsetneq 
\Def(S)$, whereas for the canonical model we have: 
$\Def(X,(\ZZ/2\ZZ)^2) = \Def(X)$.

The moduli space of pairs, of an 
extended (or nodal) Burniat surface with $K^2_S =4,3$ and   a 
$(\ZZ/2\ZZ)^2$-action, 
is disconnected; but its image in the moduli 
space
is a connected open set.}

\begin{proof}
By propositions 
\ref{famiglia4} and \ref{famiglia3} we have two families with smooth 
connected rational base
of dimension 3, resp. 4, parametrizing all 
the canonical models $X$ of the surfaces in $\sN \sE \sB_4$, resp. 
$\sN \sE \sB_3$.

In the previous theorem \ref{main1} we showed that 
the base of the Kuranishi family of $S$  is smooth, hence
base change 
of these families yield the Kuranishi family of $S$. 

The above 
families of canonical models $X$ yield the Kuranishi family of $X$ 
e.g.  by the theorem of Burns and Wahl.

 Propositions 
\ref{famiglia4} and \ref{famiglia3}, exhibiting all the canonical 
models as bidouble covers of   normal Del Pezzo surfaces, 
 
immediately show that  $\Def(X,(\ZZ/2\ZZ)^2) = \Def(X)$.
 
 Let now 
$S$ be a nodal Burniat surface.
 
Since, by (7.1), page 23,  of 
\cite{cime88}  $\Def(S,(\ZZ/2\ZZ)^2) \subsetneq \Def(S)$ is the 
intersection with
 $H^1(\Theta_S)^0$, which is the  smooth subvariety 
corresponding to the nodal Burniat surfaces,
 we obtain that 
$\Def(S,(\ZZ/2\ZZ)^2) \subsetneq \Def(S)$.
 
 On the other hand, for 
instance in the case $K^2_S = 4$, we explicitly see that $\sN \sE 
\sB_4$ is the union of two
 families of bidouble covers, the family 
of nodal Burniat surfaces, respectively the family of extended 
Burniat surfaces:
 hence the moduli space of pairs $(S, 
(\ZZ/2\ZZ)^2)$ has exactly two connected 
components.

\end{proof}

\section{One parameter limits of extended 
Burniat
surfaces with $K_S^2 =4$}\label{degenerations}

In this 
section we  shall prove the following:

\begin{theo}\label{closed} 
The family of
extended  Burniat surfaces with $K_S^2 = 4$ 
yields,
together with the family of nodal Burniat surfaces
with 
$K_S^2 = 4$, a closed subset
    $\sN \sE \sB_4$     of the moduli 
space.
\end{theo}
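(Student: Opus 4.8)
The plan is to prove closedness by analyzing an arbitrary one-parameter degeneration, exploiting that every surface in $\sN\sE\sB_4$ is, through its canonical model, a $G=(\ZZ/2\ZZ)^2$-bidouble cover of a normal quartic Del Pezzo surface with one node. Concretely, I would let $[X_0]$ be a point of $\mathfrak M^{can}_{1,4}$ lying in the closure of $\sN\sE\sB_4$, choose a germ of curve through $[X_0]$ whose punctured part maps into $\sN\sE\sB_4$, and (after a finite base change, using that the Kuranishi family of $X_0$ is smooth by Theorem \ref{main1} and represents a neighbourhood in moduli) realize this as a flat family $\mathcal{X}\to\Delta$ whose general fibre $X_t$ lies in $\sN\sE\sB_4$ and whose central fibre is the canonical model $X_0$, normal with at worst rational double points. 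The first key step is that the $G$-action extends to $X_0$: the $X_t$ carry the action compatibly, so each $g\in G$ gives an automorphism of $\mathcal{X}|_{\Delta^*}$ which, by separatedness and projectivity of the fibres, extends biregularly to $X_0$; this is consistent with $\Def(X_0,G)=\Def(X_0)$ from Theorem \ref{path}. It then suffices to show $X_0$ is the canonical model of a nodal or extended Burniat surface.

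Next I would study the limit base $Y'_0:=X_0/G$. Since $X_0$ is normal, $Y'_0$ is automatically normal, and it is the flat limit of the bases $Y'_t$; as $X_0\to Y'_0$ remains the bicanonical morphism, $Y'_0$ is anticanonically embedded as a complete intersection of two quadrics in $\PP^4$, i.e.\ a Gorenstein surface of degree $4$ with du Val singularities. The local classification of §\ref{locdefnode} (in particular Proposition \ref{def-act} and the six cases listed there) then constrains which singular points of $Y'_0$ can lift, under the three character sheaves $\sF_i$, to rational double points on $X_0$: only nodes whose covers are of the admissible RDP type may occur. Simultaneously the branch data degenerate inside the fixed linear systems of Definition \ref{df}: by the description in Proposition \ref{famiglia4}, with $D_1+D_3=\De_1+\De_3$ fixed, the only genuinely varying datum is the pair $(C_1,\Ga_2)$ with $C_1\in|L-E_1|$ and $\Ga_2\in|2L-E_2-E_3-E_4-E_5|$, so I would track the limiting position of $C_1$ and $\Ga_2$ together with the limiting node configuration of $Y'_0$.

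The final step is to classify the admissible limits and exclude the rest. By Remark \ref{difference}, (5-2), the conic $\Ga_2$ can specialize reducibly in only two essentially different ways: either $\Ga_2$ stays irreducible (the extended Burniat locus) or it splits admissibly as $\Ga_2=N_1+E_1+|L-E_2-E_3|$ (the nodal Burniat locus); any other degeneration forces $|L-E_2-E_4|$ or $|L-E_2-E_5|$ to become a component, rendering $\De_2$ non-reduced, whence the cover $X_0$ is non-normal or acquires non-canonical singularities and hence is \emph{not} a point of $\mathfrak M^{can}_{1,4}$. Combining this with the node analysis of the previous paragraph shows that the surviving limits are exactly the canonical models of nodal and extended Burniat surfaces, so $X_0\in\sN\sE\sB_4$ and the subset is closed. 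The main obstacle I anticipate is precisely this joint control of the base and the branch divisors: one must rule out limits in which the points $P_i$ collide or the pencil of quadrics degenerates so that $Y'_0$ acquires extra or worse singularities, by showing that every such limit either reproduces an admissible Burniat configuration or leaves the locus of canonical models of general type with $K^2=4$.
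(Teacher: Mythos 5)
Your setup matches the paper's: realize the limit as a one-parameter family of canonical models, extend the $G$-action (this is Proposition \ref{familydc}, quoted from Burniat II), conclude that the limit base $\mathcal{Y}_0$ is a Gorenstein Del Pezzo surface of degree $4$ and that the limit branch locus is reduced. But from that point on your argument has a genuine gap: in the third paragraph you track the degeneration only of the pair $(C_1,\Gamma_2)$ inside the fixed linear systems of Definition \ref{df}, i.e. you implicitly assume that the limit surface is still a bidouble cover of a weak Del Pezzo obtained from the \emph{same} admissible configuration of five points, so that Remark \ref{difference}, (5-2) applies. That is precisely what has to be proved. The five points defining $\mathcal{Y}_t$ move with $t$, and in the limit they can collide or become collinear in new ways, so that $\mathcal{Y}_0$ could a priori be a quartic Del Pezzo with two nodes, or with an $A_2$ singularity, or worse; in that case the branch divisors do not lie in the linear systems of Definition \ref{df} at all, and your dichotomy ``$\Gamma_2$ irreducible versus $\Gamma_2 = N_1+E_1+|L-E_2-E_3|$'' is not available. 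You name this obstacle in your last sentence but do not address it, and it is not a side issue: it is the actual content of the theorem.

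The paper's proof is devoted almost entirely to closing exactly this gap. Since the branch locus of $\mathcal{X}_t \to \mathcal{Y}_t$ contains $8$ lines and the limit branch locus is reduced, no two of these lines can collapse, so the limit base contains at least $8$ lines; a classification lemma then shows that a normal singular quartic Del Pezzo with at least $8$ lines has either one node, or two nodes, or one $A_2$ singularity. The $A_2$ case is excluded by the tables of bidouble covers of rational double points (four branch lines through the singular point are incompatible with the local models of Section \ref{locdefnode}), and the two-node case is excluded by a lengthy arithmetic analysis of the possible classes $\Delta_1,\Delta_2,\Delta_3$ (divisibility of $\Delta_i+\Delta_j$, reducedness, $p_g(\mathcal{X}_0)=0$), running through $(\lambda_1,\lambda_2,\lambda_3)\in\{(3,3,3),(1,3,5),(1,1,7)\}$. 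Only then, knowing $\mathcal{Y}_0$ is one-nodal, does a final case division (the nodal curve $A$ a connected component of the branch locus or not) identify $\mathcal{X}_0$ as an extended, respectively nodal, Burniat surface. None of this case analysis, which is the core of the paper's argument, appears in your proposal; your appeal to ``the local classification of Section 2'' constrains how a given node can be covered, but does not bound the number or type of singularities of $\mathcal{Y}_0$, which is the step that cannot be skipped.
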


This will be accomplished through the
 
study of limits of one parameter families of such extended
Burniat 
surfaces: we shall indeed show that only nodal Burniat 
surfaces (or 
extended Burniat surfaces) occur.

Let $Y'$ be a normal $\QQ$- 
Gorenstein surface and denote the
dualizing sheaf of $Y'$ by 
$\omega_{Y'}$.

Then there is a minimal natural number $m$ such 
that
$\omega_{Y'}^{\otimes m}$ is an invertible sheaf and  it
makes 
sense to define $\omega_{Y'}$ to be ample, respectively
anti-ample; 
$Y'$ is Gorenstein iff $ m=1$.

We recall the following results which 
were shown in \cite{burniat2}.
\begin{prop}\label{gorenstein}
 
Let $Y'$ be a normal $\QQ$-Gorenstein Del Pezzo surface 
(i.e.,
$\omega_{Y'}$ is anti-ample) with
         $K^2_{Y'} \geq 
4$.
Then $Y'$ is in fact 
Gorenstein.
\end{prop}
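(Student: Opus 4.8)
The plan is to reduce the statement to a question about the \emph{type} of the singular points of $Y'$ and then to use the degree hypothesis $K^2_{Y'}\geq 4$ to exclude everything but rational double points. Since $Y'$ is rational, all of its singularities are rational, and a rational Gorenstein surface singularity is exactly a rational double point; hence it suffices to prove that every singular point of $Y'$ is Du Val. To this end I would first pass to the minimal resolution $\pi\colon\tilde{Y}\ra Y'$, a smooth projective rational surface, and write the discrepancy formula
\[
K_{\tilde{Y}}=\pi^{*}K_{Y'}+\sum_i a_i E_i ,
\]
where the $E_i$ are the $\pi$-exceptional curves and, by minimality of the resolution, $a_i\leq 0$ for all $i$. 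The singularities of $Y'$ are Du Val, i.e. $Y'$ is Gorenstein, precisely when the discrepancy divisor $\sum_i a_iE_i$ vanishes, so the goal becomes: show that $K^2_{Y'}\geq 4$ forces all $a_i=0$.

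The numerical backbone is as follows. Because $-K_{Y'}$ is ample and $\QQ$-Cartier, the pullback $\pi^{*}(-K_{Y'})$ is nef and big, is orthogonal to every $E_i$, and satisfies $(\pi^{*}K_{Y'})^2=K^2_{Y'}\geq 4$. Squaring the discrepancy formula then gives
\[
K^2_{\tilde{Y}}=K^2_{Y'}+\Big(\sum_i a_iE_i\Big)^{2},
\]
and since the intersection form on the exceptional locus is negative definite the correction term is $\leq 0$, with equality if and only if the discrepancy divisor is zero. Combining this with Noether's formula $K^2_{\tilde{Y}}=10-\rho(\tilde{Y})$ for the rational surface $\tilde{Y}$, and with $\rho(\tilde{Y})=\rho(Y')+\#\{\text{exceptional components}\}$, one sees how the degree constrains the admissible exceptional configurations; the aim is to argue that the only ones compatible with a nef and big class $\pi^{*}(-K_{Y'})$ of self-intersection at least $4$ orthogonal to all $E_i$ are the Du Val (trees of $(-2)$-curve) configurations.

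The main obstacle is precisely this last step, because the purely numerical identity does not by itself rule out a klt-but-non-canonical point: a single cyclic quotient singularity such as $\tfrac14(1,1)$ already satisfies the relation above, so the inequality $K^2_{Y'}\geq 4$ must be fed in through the projective geometry rather than through intersection numbers alone. Here I would exploit the fact, recalled earlier, that the degree-$4$ del Pezzo surfaces in the family are anticanonically embedded in $\PP^4$ as complete intersections of two quadrics; a normal one-parameter limit $Y'$ is then again a flat complete intersection of two quadrics in $\PP^4$, hence a local complete intersection, hence Gorenstein, and $\hol_{\PP^4}(1)|_{Y'}=-K_{Y'}$ becomes Cartier. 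This is the essential use of the bound $K^2_{Y'}\geq 4$: it is exactly what makes the relevant multiple of $-K_{Y'}$ embed $Y'$ with a projectively normal, arithmetically Gorenstein image, leaving no room for a genuinely $\QQ$-Gorenstein quotient singularity and thereby forcing all discrepancies $a_i$ to vanish, which completes the proof.
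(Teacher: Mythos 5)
Your argument does not prove the proposition as stated, and the trouble starts with the opening reduction. You assert that $Y'$ is rational, hence has only rational singularities, so that Gorenstein is equivalent to Du Val. Neither claim follows from the hypotheses: the projective cone over an elliptic normal curve of degree $d \geq 4$ is a normal \emph{Gorenstein} Del Pezzo surface with $K^2_{Y'} = d \geq 4$ (its hyperplane class is $-K_{Y'}$), yet it is an irrational surface whose vertex is a simple elliptic singularity, which is neither rational nor Du Val. This example satisfies both the hypotheses and the conclusion of the proposition, but it makes your intermediate target --- ``every singular point of $Y'$ is Du Val'' --- a \emph{false} statement under the stated hypotheses. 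So the plan of proving Du Val-ness of all singular points cannot be carried out unless such surfaces are first excluded, which your argument never does.

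The decisive gap, however, is in your final paragraph, the only place where $K^2_{Y'} \geq 4$ is used. There you quietly replace the proposition by a different statement: you assume $Y'$ is a normal flat limit of \emph{anticanonically embedded} smooth Del Pezzo surfaces of degree exactly $4$, so that the complete-intersection-of-two-quadrics argument applies. No family, no projective embedding, and no restriction to degree $4$ occur in the hypotheses, which concern a single abstract surface with $4 \leq K^2_{Y'} \leq 9$. Moreover, this gap cannot be repaired, because the statement read literally is false: $Y' = \PP(1,1,4)$, the cone over the rational normal quartic curve --- the global form of exactly the $\tfrac14(1,1)$ singularity you flagged as the problem case --- is normal, $\QQ$-Gorenstein of index $2$, has $-K_{Y'}$ ample with $K^2_{Y'} = 9$, and is not Gorenstein. (For this surface $|-K_{Y'}|$ has a base point at the vertex, so it is not anticanonically embedded; that is what separates it from the surfaces the proposition is actually applied to.) Note also that the present paper contains no proof to compare with: the proposition is recalled from \cite{burniat2}, and it is invoked in the situation of proposition \ref{familydc}, where $\mathcal{Y}_{t_0}$ comes with precisely the extra structure your last paragraph needs --- it is a limit of bicanonical images, i.e.\ a linearly normal, non-cone surface of degree $K^2$ in $\PP^{K^2}$. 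Given that structure, your complete-intersection argument in degree $4$ (or, in general, the classical classification of linearly normal surfaces of degree $d$ in $\PP^d$) does yield Gorenstein-ness. So your closing instinct that the degree hypothesis must enter ``through the projective geometry'' is correct; but a proof of the proposition as stated would have to \emph{construct} that projective data from the hypotheses alone, and the weighted-projective example shows this is impossible.
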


\begin{prop}\label{familydc}
         Let 
$T$ be a smooth affine curve, $t_0 \in T$, and let $f 
\colon
\mathcal{X} \ra T$ be a flat  family of
canonical surfaces. 
Suppose that $\X_t$ is the canonical model of a
Burniat surface with 
$4
\leq K_{\mathcal{X}_t}^2 $ for $t \neq t_0 \in T$. Then there is a 
biregular
action of  $G: = (\ZZ / 2\ZZ)^2$ on $\X$ yielding a one 
parameter
family of finite $(\ZZ / 2 \ZZ)^2$-covers
\begin{equation*}
\xymatrix{ 
\mathcal{X}\ar[dr]_f\ar[rr]&& \mathcal{Y} \ar[dl] \\ & T&,& 
}
\end{equation*} (i.e., $\X_t \ra \mathcal{Y}_t$ is a finite $(\ZZ / 2\ZZ)^2$-cover), 
such that
$\mathcal{Y}_t$ is a Gorenstein Del Pezzo 
surface for each $t \in T$.

\end{prop}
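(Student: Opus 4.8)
The plan is to extend the fibrewise $G$-action to a biregular action on the total space $\X$, descend to a quotient family $\mathcal{Y}\to T$, and then analyse the central fibre $\mathcal{Y}_{t_0}$; the crucial input will be that canonical models of surfaces of general type carry no infinitesimal automorphisms, so that the $G$-action is rigid in families.

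First I would extend the action. Consider the relative automorphism scheme $\Aut(\X/T)\to T$. Since each fibre $\X_t$ is the canonical model of a surface of general type, one has $H^0(\X_t,\Theta_{\X_t})=0$, whence $\Aut(\X/T)\to T$ is unramified; it is moreover finite, because automorphisms of a canonical model preserve the canonical polarization and hence lie in a fixed bounded family. The hypothesis supplies, over $T\setminus\{t_0\}$, a homomorphism $G\to \Aut(\X/T)(T\setminus\{t_0\})$, i.e. for each $g\in G$ a section $s_g$ of $\Aut(\X/T)$ over the punctured curve. Taking the closure of the graph of $s_g$ inside $\Aut(\X/T)$ gives a curve mapping finitely and birationally onto $T$; as $T$ is a smooth (hence normal) curve, Zariski's main theorem forces this map to be an isomorphism, so $s_g$ extends to a section over all of $T$. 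The extended sections still satisfy the relations of $G$ by density of $T\setminus\{t_0\}$, and effectivity is preserved at $t_0$ because the identity section is open and closed in the unramified, separated scheme $\Aut(\X/T)$. Thus $G$ acts biregularly on $\X$, commuting with $f$.

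Next I would form $\mathcal{Y}:=\X/G$, with quotient map $\pi\colon\X\to\mathcal{Y}$ and induced morphism $\mathcal{Y}\to T$. As $G$ is finite, $\X\to T$ flat and $T$ a smooth curve, the quotient is flat over $T$ and commutes with base change, so $\mathcal{Y}_t=\X_t/G$; in particular $\mathcal{Y}_{t_0}$ is normal, being the quotient of the normal canonical model $\X_{t_0}$ by a finite group. For $t\neq t_0$ the fibre $\mathcal{Y}_t$ is the normal Del Pezzo surface $Y'$ and $\pi_t$ is the bicanonical bidouble cover, so the relation $2K_{\X_t}\equiv \pi_t^*(-K_{\mathcal{Y}_t})$ holds there; this relation propagates over the total space to $2K_{\X/T}\equiv\pi^*(-K_{\mathcal{Y}/T})$. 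Since $\X\to T$ is a family of canonical models, $K_{\X/T}$ is $f$-ample, and as $\pi$ is finite, $-K_{\mathcal{Y}/T}$ is ample on every fibre, including $\mathcal{Y}_{t_0}$; thus $\mathcal{Y}_{t_0}$ is a normal Del Pezzo surface, and by flatness $K^2_{\mathcal{Y}_{t_0}}=K^2_{\mathcal{Y}_t}\ge 4$.

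Finally, $\mathcal{Y}_{t_0}$ is a normal $\QQ$-Gorenstein Del Pezzo surface with $K^2\ge 4$, so Proposition \ref{gorenstein} immediately yields that it is Gorenstein; the general fibres are Gorenstein already, their only singularities being nodes. I expect the main obstacle to be the extension step: justifying the finiteness and unramifiedness of $\Aut(\X/T)$ for possibly singular canonical models and extending the sections across $t_0$, together with the accompanying claim that $\mathcal{Y}_{t_0}$ does not degenerate to a non-normal surface or acquire singularities worse than rational Gorenstein ones. It is precisely the bound $K^2\ge 4$, through Proposition \ref{gorenstein}, that rules out such bad behaviour and closes the argument.
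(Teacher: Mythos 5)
Note first that the present paper never proves Proposition \ref{familydc}: it is quoted from \cite{burniat2} ("We recall the following results which were shown in \cite{burniat2}"), so your attempt has to be measured against the argument there, which runs through the relative bicanonical morphism (for a family of canonical surfaces $f_*\omega_{\X/T}^{\otimes 2}$ is locally free and compatible with base change; its projectivization gives a finite relative morphism whose image is $\mathcal{Y}$ and whose Galois structure produces the $G$-action), rather than through the relative automorphism scheme. The second half of your proof is essentially fine: in characteristic zero the quotient by a finite group is flat over a smooth curve and commutes with base change; $\mathcal{Y}_{t_0}$ is normal with finite quotients of RDPs as singularities, hence $\QQ$-Gorenstein (a point you need but do not mention — both for anti-ampleness of $K_{\mathcal{Y}_{t_0}}$ to make sense and for Proposition \ref{gorenstein} to be applicable); the relation $2K_{\X/T}\equiv\pi^*(-K_{\mathcal{Y}/T})$ does propagate across the central fibre, since the discrepancy is a Weil divisor supported on the irreducible fibre $\X_{t_0}=f^*t_0$, which is principal after shrinking $T$; and ampleness descends along finite surjective maps. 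Also, finiteness of $\Aut(\X/T)\ra T$ is a properness statement of Matsusaka--Mumford type; boundedness, which is all that "preserving the canonical polarization" gives, only yields finite type, not the valuative criterion.

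The genuine gap is your opening assertion that "the hypothesis supplies, over $T\setminus\{t_0\}$, a homomorphism $G\ra\Aut(\X/T)(T\setminus\{t_0\})$". It does not. The hypothesis supplies, for each \emph{individual} $t\neq t_0$, an abstract subgroup $G_t\cong(\ZZ/2\ZZ)^2\subset\Aut(\X_t)$, namely the deck group of the bicanonical map of $\X_t$; promoting this fibrewise datum to four sections $s_g$ over the punctured curve is precisely the nontrivial content of the proposition, and it requires two things. First, that the $G_t$ glue into a closed subgroup scheme of $\Aut(\X/T)$, finite and \'etale over $T\setminus\{t_0\}$: this can be done by identifying $G_t$ functorially as the kernel of the action of $\Aut(\X_t)$ on $H^0(2K_{\X_t})$ (bicanonical sections are pulled back from $\mathcal{Y}_t$, so $G_t$ acts on them trivially), using base change for $f_*\omega_{\X/T}^{\otimes 2}$. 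Second, and more seriously, that this \'etale group scheme is \emph{constant}: a finite \'etale group scheme with fibres $(\ZZ/2\ZZ)^2$ over a punctured affine curve is constant only if the monodromy of $\pi_1(T\setminus\{t_0\})$ on a fibre is trivial, and a priori the monodromy could permute the three nontrivial involutions. If it did, the cover $\X\ra\mathcal{Y}$ would be Galois fibre by fibre but not globally Galois, no sections $s_g$ would exist, and the conclusion of the proposition would fail; so this must be excluded by an argument specific to Burniat surfaces, e.g.\ by showing the three involutions are pairwise distinguished by monodromy-invariant data (their fixed loci and their local behaviour at the nodes, equivalently the three character sheaves of the bidouble cover). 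Your proof assumes exactly the point at issue; once that step is supplied, the remainder of your argument goes through.
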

Observe that the above result remains 
true if we replace ``Burniat
surface'' by ``extended Burniat 
surface''.

This implies immediately the following:

\begin{cor}
 
Consider a one parameter family of bidouble covers $\mathcal{X} 
\ra
\mathcal{Y}$ as  in prop. \ref{familydc}.
Then the branch locus 
of $\mathcal{X}_{t_0} \ra \mathcal{Y}_{t_0}$ is
the limit of the 
branch locus of
$\mathcal{X}_t
\ra \mathcal{Y}_t$, and it is 
reduced.
\end{cor}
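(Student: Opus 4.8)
The plan is to realize the relative branch divisor as the zero locus of a family of sections coming from the eigensheaf decomposition of the covering algebra, deduce the limit statement from flatness of that algebra, and then force reducedness out of the normality of the special canonical model.

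First I would fix the bookkeeping. By Proposition \ref{familydc} we have a finite bidouble cover $p\colon \mathcal{X}\to\mathcal{Y}$ over $T$, with $\mathcal{Y}\to T$ a family of Gorenstein Del Pezzo surfaces carrying a fibrewise $G=(\ZZ/2\ZZ)^2$-action. Decomposing the covering algebra gives $p_*\mathcal{O}_{\mathcal{X}}=\mathcal{O}_{\mathcal{Y}}\oplus\bigoplus_{i=1}^3(p_*\mathcal{O}_{\mathcal{X}})_{\chi_i}$, where over the locus $\mathcal{Y}^{\circ}$ on which $\mathcal{Y}\to T$ is smooth the eigensheaves are line bundles $\mathcal{L}_i^{-1}$. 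The multiplication $(p_*\mathcal{O}_{\mathcal{X}})_{\chi_i}^{\otimes 2}\to\mathcal{O}_{\mathcal{Y}}$ is a relative section $\sigma_i$ of $\mathcal{L}_i^{\otimes 2}$ whose zero divisor, restricted to $\mathcal{Y}^{\circ}_t$, is exactly the branch divisor $(D_j+D_k)_t$ of the $i$-th intermediate double cover $w_i^2=\sigma_i$. The three $D_i$ together form $\mathcal{B}_t=D_1^{(t)}+D_2^{(t)}+D_3^{(t)}$, which by Remark \ref{difference} lies in $|-3K_{\mathcal{Y}_t}|$ and so has constant positive anticanonical degree $3K_{\mathcal{Y}_t}^2$.

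Second, the limit. Since $p_*\mathcal{O}_{\mathcal{X}}$ is flat over $T$, the $\sigma_i$ restrict compatibly, and $\sigma_i|_{t_0}$ is precisely the datum cutting out the branch divisor of $\mathcal{X}_{t_0}\to\mathcal{Y}_{t_0}$. Because $\mathcal{B}_t$ has constant positive degree, no $\sigma_i$ can vanish identically on $\mathcal{Y}_{t_0}$; hence each family $\mathrm{div}(\sigma_i)$ is flat over $T$, and its central fibre is simultaneously the flat limit of the $(D_j+D_k)_t$ and the branch divisor of the limit cover. Over the finitely many nodes of the $\mathcal{Y}_t$, where $p$ is not flat and the $(p_*\mathcal{O}_{\mathcal{X}})_{\chi_i}$ are only reflexive, I would instead appeal to the explicit local models of Section \ref{locdefnode}: the specializations recorded after Proposition \ref{def-act}, namely subtracting $N$ from one $D_i$ and adding it to another while $\Gamma$ passes through $D+N$ and then loses $N$, are exactly the flat limits of the local branch divisors. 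Patching the generic and local pictures identifies $\mathcal{B}_{t_0}$ with $\lim_{t\to t_0}\mathcal{B}_t$.

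Third, reducedness. Here I would use that $\mathcal{X}_{t_0}$, as a fibre of the family $f$ of canonical surfaces, is itself a canonical model, in particular normal, and that $\mathcal{X}_{t_0}=\mathrm{Spec}_{\mathcal{Y}_{t_0}}\bigl(p_*\mathcal{O}_{\mathcal{X}}|_{t_0}\bigr)$ is precisely the bidouble cover attached to $\mathcal{B}_{t_0}$. A bidouble cover of a normal surface whose total branch divisor carries a multiple component is non-normal along the preimage of that component (visible already from the intermediate covers $w_i^2=\sigma_i$, non-normal when $\sigma_i$ fails to be square-free, and read off at the nodes from the RDP-classification of Section \ref{locdefnode}); normality of $\mathcal{X}_{t_0}$ therefore forces $\mathcal{B}_{t_0}$ to be reduced, ruling out both non-reduced individual $D_i$ and collisions $D_i\cap D_j$ along a curve. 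The main obstacle will be the behaviour at the nodes of $\mathcal{Y}_t$: there $p$ is a non-flat cover, so the naive discriminant-varies-flatly argument breaks down, and one must match the global family against the six local normal forms of Section \ref{locdefnode} to verify both that the limit of the branch divisors equals the branch divisor of the limit and that reducedness is preserved; once the two pictures are reconciled the remainder is formal.
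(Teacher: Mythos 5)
Your proposal is correct, and in fact it supplies in full the argument that the paper leaves implicit: the corollary is stated there with no written proof, as an ``immediate'' consequence of Proposition \ref{familydc}, and the two pillars it silently rests on are exactly the ones you isolate, namely (a) by Proposition \ref{familydc} the covers $\mathcal{X}_t \ra \mathcal{Y}_t$ fit into one family of finite $(\ZZ/2\ZZ)^2$-covers over $T$, so the branch data varies in a flat family, and (b) the central fibre $\mathcal{X}_{t_0}$ is itself a canonical model, hence normal, which forces the branch divisor to be reduced. Your implementation of (a) through the character decomposition of $p_*\mathcal{O}_{\mathcal{X}}$ and the multiplication sections $\sigma_i$ (the building data of the bidouble cover) is a legitimate and concrete route; the paper's implicit route would more likely run through closedness of the ramification locus of $\mathcal{X} \ra \mathcal{Y}$ together with constancy of the class of the branch divisor ($\equiv -3K_{\mathcal{Y}_t}$, so the limit divisor and the central branch divisor are effective divisors in the same class, one containing the other, hence equal). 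The content of the two approaches is the same.

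Two steps should be tightened. First, your inference that no $\sigma_i$ vanishes identically on $\mathcal{Y}_{t_0}$ ``because $\mathcal{B}_t$ has constant positive degree'' is not valid as stated: the degree of the nearby branch divisors says nothing about the restriction of $\sigma_i$ to the central fibre. The correct reason is that $\mathcal{X}_{t_0}$ is reduced and irreducible (being a canonical model), so $\mathcal{X}_{t_0} \ra \mathcal{Y}_{t_0}$ is a finite degree-$4$ cover of normal surfaces in characteristic $0$, hence generically \'etale, and each multiplication map $\sigma_i|_{t_0}$ is therefore generically nonzero; equivalently, $\sigma_i|_{t_0}=0$ would produce nilpotents in the structure sheaf of $\mathcal{X}_{t_0}$. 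This matters, because identifying the flat limit of $\mathrm{div}(\sigma_i|_t)$ with $\mathrm{div}(\sigma_i|_{t_0})$ requires precisely this non-vanishing. Second, the appeal to the local models of Section \ref{locdefnode} at the nodes is unnecessary (and, as phrased, it concerns divisors on the resolution $\tilde{Y}$ rather than on $\mathcal{Y}$): the non-flat locus of $p$ lies in the singular locus of $\mathcal{Y} \ra T$, which meets every fibre in finitely many points, and effective divisors on the surfaces $\mathcal{Y}_t$ are determined by their restrictions to the complement of a finite set; so both the limit identification and reducedness can be checked entirely over the smooth locus $\mathcal{Y}^{\circ}$, where your flat-Cartier-divisor and square-free arguments apply verbatim. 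Finally, in the reducedness step you should say explicitly that the intermediate double cover $w_i^2=\sigma_i|_{t_0}$ is the quotient of $\mathcal{X}_{t_0}$ by the corresponding involution, hence normal because quotients of normal varieties by finite groups are normal: that is the fact which converts non-square-freeness of some $\sigma_i|_{t_0}$ into a contradiction with the normality of $\mathcal{X}_{t_0}$.
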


Note that the limit of a line on the del Pezzo 
surfaces
$\mathcal{Y}_t$ is a line on the del Pezzo
surface 
$\mathcal{Y}_{t_0}$, and, as a consequence of the above assertion,
 
two lines in the branch locus in
$\mathcal{Y}_t$ cannot tend to the 
same line in
$\mathcal{Y}_{t_0}$.

\begin{rem} Let $X$ be the 
canonical model of an extended 
Burniat surface with
$K_X^2 
=4$.
Recall that $X$ is smooth
for a general member of the family of 
extended Burniat surfaces, whereas
$X$ has one
ordinary node if $X$ 
is the canonical model of a nodal Burniat
surface with $K^2 = 4$.

In 
the extended case the branch locus consists of the union of
$3$ 
hyperplane sections, containing   $8$
lines, $2$ conics and the node. 
In the nodal Burniat case 
one of the conics degenerates to two 
lines, hence the branch
locus  consists instead of $10$ lines and one 
conic.
\end{rem}

The first step towards proving theorem \ref{closed} 
is the following:
\begin{prop}\label{onenode}
       Consider a one 
parameter family of bidouble covers $\mathcal{X} \ra
\mathcal{Y}$ as 
in prop. \ref{familydc} except that  $\sX_t$ is an extended
 Burniat 
surface with $K_{X_t}^2 = 4$ for $t \neq 0$.

Then $\mathcal{Y}_0$ is 
a normal Del Pezzo surface with exactly one node
as 
singularity.
\end{prop}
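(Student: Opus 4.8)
The plan is to combine the constraint that $\mathcal{Y}_0$ is forced to be a Gorenstein Del Pezzo surface with the rigidity of the branch configuration under specialization. First I would invoke Proposition \ref{familydc} (in its extended version), together with Proposition \ref{gorenstein}, to conclude that $\mathcal{Y}_0$ is a normal Gorenstein Del Pezzo surface of degree $K^2=4$; hence its singularities are rational double points, and its minimal resolution $\tilde{\mathcal{Y}}_0$ is a weak Del Pezzo surface obtained by blowing up $\PP^2$ at five (possibly infinitely near) points. The singular points of $\mathcal{Y}_0$ are then in bijection with the $(-2)$-curves of $\tilde{\mathcal{Y}}_0$, and the whole configuration is bounded by the $D_5$ root system. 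Since the points $P_1,P_4,P_5$ are collinear for every $t$, their limits stay collinear, so $\tilde{\mathcal{Y}}_0$ carries at least the $(-2)$-curve $N_1=L-E_1-E_4-E_5$; equivalently $\mathcal{Y}_0$ is singular (this also follows formally, since the locus of $t$ with $\mathcal{Y}_t$ singular is closed and dense, hence all of $T$). It thus remains to prove that $N_1$ is the only $(-2)$-curve and that it is an ordinary node.

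Second, I would exploit the Corollary following Proposition \ref{familydc}: the branch locus $B_0$ of $\mathcal{X}_0\ra\mathcal{Y}_0$ is the reduced flat limit of the $B_t$, and moreover two lines of $B_t$ cannot tend to the same line of $\mathcal{Y}_0$. For $t\neq 0$ the divisor $B_t$ consists of $8$ lines and $2$ conics. Reducedness of $B_0$ already forces a conic to degenerate only in the admissible fashion of Remark \ref{difference}, item (5-2), namely $\Ga_2=N_1+E_1+|L-E_2-E_3|$, which is exactly the passage to a nodal Burniat divisor and creates no new singularity; every other splitting renders some $\De_i$ non-reduced and is excluded. To rule out additional $(-2)$-curves I would argue that any new incidence among the five points forces two branch lines to coincide: a new collinearity of three of the points, or a collision of two of them, makes a pair among the branch lines $|L-E_i-E_k|$ and $|L-E_j-E_k|$ map to one and the same line of $\mathcal{Y}_0$, contradicting the distinctness statement, and an infinitely near degeneration is excluded in the same manner. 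A finite check over the possible extra classes $E_i-E_j$ and $L-E_i-E_j-E_k$ then leaves the persistent collinearity $P_1,P_4,P_5$ as the only degeneration compatible with a reduced $B_0$, so $N_1$ is the unique $(-2)$-curve.

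Finally, I would confirm that $N_1$ produces an ordinary node (type $A_1$) rather than a worse singularity by appealing to the local analysis of Section \ref{locdefnode}: near the node the cover $\mathcal{X}_t\ra\mathcal{Y}_t$ is precisely the family of Proposition \ref{def-act}, for which $\mathcal{Y}_t=\{z^2=xy\}$ is a fixed ordinary node for all $t$ (type $2$ for $t\neq 0$, type $4$ for $t=0$); so no worsening can occur at $N_1$. Combining the three steps yields that $\mathcal{Y}_0$ is normal with exactly one node. I expect the main obstacle to be the second step, the exhaustive exclusion of extra $(-2)$-curves: a priori a degeneration of $\Ga_2$ into a line through three points plus a residual line need not visibly violate reducedness, so it is exactly the hypothesis that distinct branch lines remain distinct which must be brought to bear, and carrying this through for every potential new collinearity or coincidence of the five points is the delicate part.
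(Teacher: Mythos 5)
Your opening and closing frame agree with the paper's setup (Propositions \ref{familydc} and \ref{gorenstein} give that $\mathcal{Y}_0$ is a normal Gorenstein Del Pezzo surface of degree $4$, and it is singular since the singular fibres form a closed dense subset of $T$; the Corollary after Proposition \ref{familydc} gives that the eight branch lines have distinct limits). But the heart of your argument --- excluding every singular configuration except a single node --- rests on two steps that are respectively unjustified and false. First, the point-tracking: you treat the resolution of $\mathcal{Y}_0$ as the blow-up of $\PP^2$ at the limits of the points $P_i(t)$, and you compute limits of branch lines by their classes $L-E_i-E_j$, $E_i$ in a marking that you assume persists at $t=0$. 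A flat limit of blow-ups need not be the blow-up of a limit, and divisor classes only specialize along the family after one has a simultaneous resolution (after base change, Brieskorn--Tjurina) or some other locally constant structure on the Picard lattices; nothing in your proposal supplies this. The paper avoids the issue entirely by arguing intrinsically on $\mathcal{Y}_0$: it first proves a classification lemma (a normal singular degree-$4$ Del Pezzo containing at least $8$ lines has either one node and $12$ lines, or two nodes and $9$ lines, or an $A_2$ point and $8$ lines, four of them through the singularity), and only then confronts this list with the eight distinct limit lines.

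Second, and more seriously, even granting the marking, your mechanism ``any new incidence forces two branch lines to coincide'' fails in exactly the dangerous cases, namely when the new $(-2)$-class meets $N_1=L-E_1-E_4-E_5$ and the singularity is an $A_2$ point. For instance, if $E_1-E_3$ becomes effective ($P_3$ colliding with $P_1$), the eight branch lines $E_3$, $L-E_1-E_2$, $L-E_2-E_4$, $L-E_2-E_5$, $L-E_1-E_3$, $L-E_3-E_4$, $L-E_3-E_5$, $E_2$ limit to the images of $E_3$, $L-E_1-E_2$, $L-E_2-E_4$, $L-E_2-E_5$, $L-E_1-E_3$, $E_5$, $E_4$, $E_2$ (since $L-E_3-E_4\equiv N_1+(E_1-E_3)+E_5$, etc.): these are eight \emph{distinct} lines, the branch divisor stays reduced, and no contradiction appears. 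Such configurations must instead be killed by the local classification of Du Val bidouble covers of Section \ref{locdefnode} and the tables of \cite{autRDP}: over an $A_2$ point the branch locus is analytically a cusp or a pair of lines, which is incompatible with four of the eight lines passing through the singular point --- this is precisely the paper's exclusion of its case (3), an ingredient your proposal never uses for exclusion. You invoke the local theory only at the node itself, and there the reasoning is circular: asserting that near the node the family ``is precisely the family of Proposition \ref{def-act}'' presupposes that the limit singularity is still $A_1$, whereas semicontinuity only guarantees that singularities can get \emph{worse} in the limit, which is exactly what has to be ruled out. (Where your collision mechanism does apply --- a new collinearity, or an extra node disjoint from $N_1$ --- it is correct and, once the marking gap were repaired, would even give a shortcut past the paper's longest step, the $p_g=0$ character-sheaf computation that excludes the two-node case; but as written the $A_2$ degenerations slip through.)
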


\begin{lemma}
A normal singular Del Pezzo 
surface with
$K_{\sY_0}^2 =4$ containing at least 8 lines has as 
singularities either 
\begin{enumerate}
\item
one node, and then it 
contains 12 lines, or
\item
 two nodes, and then it contains 9 lines, 
or

\item
an $A_2$ singularity, and then it contains 8 lines,
4 of 
which pass through the singular 
point.
\end{enumerate}

\end{lemma}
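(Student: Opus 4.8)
The plan is to pass to the minimal resolution and reduce the whole statement to combinatorics in the root lattice of a weak del Pezzo surface of degree $4$. First, by Proposition \ref{gorenstein} the surface $\sY_0$, being a normal del Pezzo surface with $K^2_{\sY_0}=4$, is automatically Gorenstein; hence its singularities are rational double points and its minimal resolution $\pi\colon \tilde Y \to \sY_0$ is a weak del Pezzo surface of degree $4$. I would fix an isomorphism $\Pic(\tilde Y)\cong \ZZ L \oplus \bigoplus_{i=1}^{5}\ZZ E_i$ with the standard intersection form, so that $-K_{\tilde Y}=3L-\sum_i E_i$ and the sublattice $K_{\tilde Y}^{\perp}$ carries the root system $D_5$. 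The exceptional locus of $\pi$ is a disjoint union of smooth rational $(-2)$-curves whose dual graph is the ADE diagram of the singularities of $\sY_0$; let $R\subseteq D_5$ denote the root subsystem spanned by their classes. Since $\sY_0$ is singular, $R\neq 0$.

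The central step is the line count. Each line of $\sY_0\subset\PP^4$ is the image of a unique $(-1)$-curve on $\tilde Y$, and conversely each of the $16$ classes $\ell$ with $\ell^2=\ell\cdot K_{\tilde Y}=-1$ is represented by an irreducible $(-1)$-curve exactly when $\ell\cdot N\ge 0$ for every effective $(-2)$-curve $N$ (otherwise $\ell$ splits off a root and is no longer irreducible). Thus the number of lines equals the number of these $16$ classes satisfying all the inequalities $\ell\cdot N\ge 0$, a quantity that depends only on the $W(D_5)$-orbit of $R$. I would then compute this number for the relevant subsystems. For $R=A_1$ one checks directly that exactly $4$ of the $16$ classes are killed, leaving $12$; for the $2A_1$-configuration coming from two collinear triples through a common point (say $L-E_1-E_2-E_3$ and $L-E_1-E_4-E_5$) one finds $9$ surviving classes; and for $R=A_2$ one finds $8$, among which precisely the $4$ classes meeting the exceptional chain positively pass through the singular point, yielding the description in case (3). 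These are the three listed cases.

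The main obstacle is the exhaustiveness. Here the delicate point is twofold. First, $D_5$ has \emph{two} $W$-orbits of subsystems of type $2A_1$ (one from two collinear triples sharing a point, one of ``disjoint support'' type involving an infinitely near point), and one must identify which one actually occurs for the surfaces at hand; I would isolate the correct orbit either by the lattice computation above or, if necessary, by invoking the geometric input already recorded in this section — that in a one-parameter degeneration two distinct lines of the branch locus cannot tend to the same line, which constrains how the limit acquires its $(-2)$-curves. Second, one must rule out every larger configuration. For this I would use monotonicity: enlarging $R$ only adds inequalities $\ell\cdot N\ge 0$ and hence can only decrease the number of surviving $(-1)$-classes, so it suffices to verify that the minimal configurations not on the list (types $A_3$, $3A_1$, $A_1+A_2$, and their enlargements) already force at most $7$ lines. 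Combined with the hypothesis of at least $8$ lines, this pins the possibilities down to exactly (1)–(3). The careful bookkeeping of which of the $16$ classes survive in each case, and the verification that no admissible enlargement climbs back up to $8$ lines, is the part that requires genuine attention; the remainder is routine lattice arithmetic.
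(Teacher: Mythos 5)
Your overall strategy is genuinely different from the paper's. The paper argues with plane models: it bounds the number of lines lost to a chain of $k$ infinitely near base points (a chain with $k\geq 4$ leaves fewer than $5$ lines, a chain with $k=3$ leaves exactly the $A_2$ case with $8$ lines), and for the remaining cases invokes a Cremona transformation to reduce to five distinct points of $\PP^2$, where the only $(-2)$-curves come from collinear triples, giving cases (1) and (2). Your route through the root lattice $D_5$ --- lines are the $(-1)$-classes $\ell$ with $\ell\cdot N\geq 0$ for every effective $(-2)$-curve $N$, the count depends only on the $W(D_5)$-orbit of the effective root subsystem $R$, and enlarging $R$ can only decrease the count --- is sound in outline, and your counts for $A_1$ ($12$ lines), for the $2A_1$ spanned by $L-E_1-E_2-E_3$ and $L-E_1-E_4-E_5$ ($9$ lines), and for $A_2$ ($8$ lines, $4$ through the singular point) are correct, as is the monotonicity argument disposing of every $R$ containing $3A_1$, $A_1+A_2$ or $A_3$.

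The genuine gap is exactly the point you postponed: the second $W(D_5)$-orbit of $2A_1$. Neither of your proposed fixes can work, because that orbit \emph{is} realized by a normal del Pezzo surface of degree $4$ satisfying the hypothesis of the lemma: blow up $P_1,P_2,P_3$ collinear and $P_5$ infinitely near $P_4$ (generic direction), so that the effective $(-2)$-curves are $L-E_1-E_2-E_3$ and $E_4-E_5$. Your own criterion gives exactly $8$ lines on this surface, namely $E_1,E_2,E_3,E_5$, $L-E_i-E_4$ ($i=1,2,3$) and $L-E_4-E_5$; so the ``lattice computation'' cannot exclude it ($8\geq 8$), and the degeneration constraint (two branch lines cannot collide in the limit) is not a hypothesis of the lemma --- the lemma is a free-standing classification statement that is afterwards applied to the degeneration, so importing that input would prove a different statement and invert the logical order. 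Carried out honestly, your method therefore does not yield the stated trichotomy: it produces a fourth case, two nodes with $8$ lines (splitting $4+4$ through the two nodes). It is worth noting that the paper's own proof is vulnerable at the very same spot: its Cremona reduction ``to the case of no infinitely near points'' fails for this surface, since its $8$ lines form four disjoint incident pairs, so at most $4$ of them are pairwise disjoint and no plane model with five distinct points exists. In the paper's later application the extra case could still be excluded (all $8$ of its lines pass through a node, while the local classification of section \ref{locdefnode} allows at most two branch lines through each node), but as a proof of the lemma as stated, your proposal --- like the paper's --- has a hole precisely at the second $2A_1$ orbit, and no argument confined to the lemma's hypotheses can close it.
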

\begin{proof}
The assertion is a 
generalization of Proposition 3.6 of \cite{burniat2}, page 581,
see 
especially the proof in the appendix ibidem, pages 585-587.

We blow 
up $r=5$ points in the plane.

By the estimate about the loss of 
number of lines when one has
a chain of $k$ infinitely near points, 
we see that $ k\geq 4$ implies that the number of lines
is less than 
$16- 11 = 5$.

If there is a chain with $ k=3$, the same estimate 
gives  a loss of $8$, and we cannot
then have other (-2)-curves, else 
the number would be strictly smaller than $ 16-8 = 8 $.

In this case 
we get an $A_2$ singularity and 8 lines.

In fact,  in the chosen 
plane model
we have 5 points lying on an irreducible conic $C$, of 
which $P_2$ infinitely near to $P_1$,
and $P_3$ infinitely near to 
$P_2$. The lines are given by
$$ E_3, E_4, E_5, | L - E_1 - E_4|,   | 
L - E_1 - E_5|,  | L - E_1 - E_2|,  | L - E_4 - E_5|, C',$$
where 
$C'$ is the strict transform of $C$. 

In this case the  4 lines 
passing through the singular point are 
$$ E_3,  | L - E_1 - E_4|, 
| L - E_1 - E_5|,  | L - E_1 - E_2| .$$

In the case where there is 
no chain of three infinitely near points by a standard Cremona 
transformation
as in \cite{burniat2}, ibidem,
we may reduce to the 
case where there are no infinitely near points and then we have 
 
that the weak Del Pezzo surface is $\hat{Y}_0 : = 
\hat{\PP}^2(P_1,
\ldots , P_5)$, where $P_1,P_2,P_3$ and $P_1, P_4, 
P_5$ are
collinear. 

Then $\hat{Y}_0$ contains nine lines. In fact, 
the set
of lines of $\hat{Y}_0$ is:
$$
\mathcal{L}:=\{ E_1, \ldots , 
E_5, L-E_2-E_4, L-E_2-E_5, 
L-E_3-E_4,
L-E_3-E_5\}.
$$

\end{proof}

\begin{proof}[Proof of prop. 
\ref{onenode}]
 Since the branch locus of $\mathcal{X}_t 
\ra
\mathcal{Y}_t$ contains eight lines for $t \neq 0$, also the 
branch
locus of $\mathcal{X}_0 \ra
\mathcal{Y}_0$ contains eight 
lines. 

We want to show that cases (2) and (3) of the previous lemma 
cannot occur.

We start by eliminating case (3).

Here,  the $A_2$ 
singularity must be a limit ofthe node of $\sY_t$, hence
the bidouble 
cover is branched at the singular point.

The bidouble cover is a 
RDP, hence, looking at table 2, page 90 of \cite{autRDP},
and table 
3, page 93 ibidem, 
we see that the branch locus is analytically 
isomorphic to
\begin{itemize}
\item
an ordinary cusp $ \{ y=0= z^2 + 
x^3=0\}$ for $E_6 = \{ z^2 + x^3 + t^4 = 0 \} \ra A_2 = \{ z^2 + x^3 
+ y^2 = 0 \}$,
\item
two lines $ \{ x=0= z^2 + y^2=0\}$ for  $A_5  = 
\{ z^2 + w^6 + y^2 = 0 \} \ra A_2 = \{ z^2 + x^3 + y^2 = 0 
\}$,
\item
two lines $ \{ x=0= z^2 + y^2=0\}$ for  the composition of 
$A_2 \ra A_5$ (ramified only at the singular point) with the 
previous
$A_5  = \{ z^2 + w^6 + y^2 = 0 \} \ra A_2 = \{ z^2 + x^3 + 
y^2 = 0 \}$.
\end{itemize}

We observe however that by our previous 
arguments the branch locus contains the 8 lines, 4 of which 
pass
through the $A_2$ singularity, contradicting the above local 
description of the branch locus.

Assume now by contradiction that we 
have case (2), i.e., $\sY_0$ has two nodes. Then

\begin{claim}
 
$E_1$ is not a component of the total branch locus $\Delta$ of 
$\hat{X}_0 \ra
\hat{Y}_0$ , i.e.,
$$
       E_2, \ldots , E_5, 
L-E_2-E_4, L-E_2-E_5, L-E_3-E_4, L-E_3-E_5
$$
are exactly the 8 lines 
contained in $\Delta$.
\end{claim}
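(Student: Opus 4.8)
The plan is to determine the omitted line by playing the global count of branch lines against the local classification of Du~Val bidouble covers in Section~\ref{locdefnode}. Denote by $M_1 \in |L-E_1-E_2-E_3|$ and $M_2 \in |L-E_1-E_4-E_5|$ the two $(-2)$-curves of $\hat{Y}_0$ arising from the collinearity of $P_1,P_2,P_3$ and of $P_1,P_4,P_5$; these contract to the two nodes. By the corollary to Proposition~\ref{familydc} the branch locus $\Delta$ of $\hat{X}_0 \ra \hat{Y}_0$ is the \emph{reduced} limit of the branch loci for $t \neq 0$. Since the branch locus of $\mathcal{X}_t \ra \mathcal{Y}_t$ contains $8$ lines, since limits of lines are lines, and since two distinct branch lines cannot degenerate to the same line, these eight lines have eight distinct line limits; as $\hat{Y}_0$ carries only nine lines, no conic component can split off further lines, so $\Delta$ contains exactly $8$ of the $9$ lines of $\hat{Y}_0$. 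Hence precisely one line is missing, and the whole point of the Claim is that it is $E_1$.

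First I would record, by a one-line intersection computation on $\hat{Y}_0$, which lines meet which nodal curve: the lines meeting $M_1$ are $E_1,E_2,E_3$, the lines meeting $M_2$ are $E_1,E_4,E_5$, the four lines $L-E_i-E_j$ with $i\in\{2,3\}$, $j\in\{4,5\}$ meet neither, and $E_1$ is the \emph{unique} line passing through both nodes.

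The decisive input is local. As $\mathcal{X}_0$ is a canonical model it carries only rational double points, so over each node the cover $\hat{X}_0 \ra \hat{Y}_0$ realizes one of the six Du~Val types of Section~\ref{locdefnode}; in every such type at most two branch components cross the exceptional $(-2)$-curve (the branch divisor on the resolution is $N$, or $N$ together with at most two fibres, or $N$ together with a curve of prescribed tangency). Thus at most two branch lines may meet each of $M_1$ and $M_2$. Now suppose $E_1 \in \Delta$: at $M_1$ this forces the omission of at least one of $E_2,E_3$, and at $M_2$ the omission of at least one of $E_4,E_5$, two disjoint omissions, contradicting the fact that only one line is missing. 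Therefore $E_1 \notin \Delta$, and the remaining $8$ lines are exactly those in the statement, with precisely two of them ($E_2,E_3$, resp.\ $E_4,E_5$) meeting each node, in agreement with the local picture.

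The step I expect to be most delicate is the local bound itself. One must check that the further branch components (the conics, and possibly the $(-2)$-curves $M_1,M_2$ themselves) do not cross a node in a way that would leave room for a third branch line, and that the omitted line is genuinely absent rather than surviving as a non-reduced or degenerate component; both points are controlled by the reducedness in the corollary to Proposition~\ref{familydc} and by the explicit Du~Val classification of Section~\ref{locdefnode}.
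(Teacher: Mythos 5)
Your proof is correct and is essentially the paper's own argument: both rest on the previously established fact that the limit branch locus contains eight distinct lines, together with the Section 2 classification of bidouble covers of a node showing that at most two branch components can cross the exceptional $(-2)$-curve, so that $E_1 \leq \Delta$ would be contradictory. The only cosmetic difference is that the paper derives the contradiction by noting that $E_1 \in \Delta$ forces \emph{three} branch lines through one of the two nodes (since only one line can be missing), whereas you phrase it as two forced omissions against a single allowed omission --- the same counting read in the opposite direction.
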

\begin{proof}[Proof of the 
claim.]
Assume that $E_1$ is contained in the total branch locus 
$\Delta$ of the bidouble
cover $\hat{X}_0 \ra
\hat{Y}_0$. Then 
$\Delta$ contains three lines intersecting
one of the two $(-2)$ 
curves. But a bidouble cover of a node branched
in at least three 
lines does not give a rational double point, as shown by the 
classification
recalled in section \ref{locdefnode}. A 
contradiction.
\end{proof}

Since for each node  $\nu_1, \nu_2$ there 
are two lines in the total branch divisor passing through
$\nu_i$, it 
follows by the classification given  in section \ref{locdefnode}, 
that $N_1, N_2 \leq \Delta$ and that $(\Delta-N_i)N_i =2$.

Denote by 
$\pi \colon \hat{Y}_0 \ra Y'$ be the desingularization map. 

Then 
$\pi_*(\Delta) \equiv -3K_{Y'}$, whence 
$$
\Delta \equiv 
-3K_{\hat{Y}_0} + n_1N_1+n_2N_2.
$$
Then $2=(\Delta-N_i)N_i 
=(n_i-1)N_i^2 = 2(1-n_i) \ \Leftrightarrow \ n_i=0$.

We conclude 
that

$$
\Delta \equiv -3K_{\hat{Y}_0}.
$$

Observe 
that
$$
-3K_{\hat{Y}_0} - \sum_{l \in \mathcal{L} \setminus \{E_1\}}l 
- N_1 - N_2  \equiv
3L - E_1-E_2- \ldots -E_5.
$$

Since no other 
component of $\Delta$ can intersect the
$(-2)$-curves, we see 
immediately that the remaining two
components of $\Delta$ are: 
$$L-E_1, 2L-E_2 -E_3-E_4-E_5.$$

We write now
$$
       \Delta_1 = 
\lambda_1L - E_1 - a_2E_2-a_3E_3-a_4E_4-a_5E_5,
$$
$$
\Delta_2 = 
\lambda_2L - E_1 - b_2E_2-b_3E_3-b_4E_4-b_5E_5,
$$
$$
\Delta_3 = 
\lambda_3L - E_1 - c_2E_2-c_3E_3-c_4E_4-c_5E_5.
$$

Here we have used 
that, since $E_1$ is not a
component of $\Delta$ and since  $\Delta_i 
+ \Delta_j$ has to be divisible by
two, the only possibility is $$E_1 
\cdot (\Delta_1,\Delta_2,\Delta_3) = (1,1,1).$$

Note that, since 
$\lambda_1 + \lambda_2 + \lambda_3 = 9$ (and again since
$\Delta_i + 
\Delta_j$ is divisible by two) we 
have:
$$
(\lambda_1,\lambda_2,\lambda_3) \in 
\{(3,3,3),(1,3,5),(1,1,7)\}.
$$
Moreover, since the branch divisor is 
reduced,  for each $i$ it happens that, 
 among the three numbers 
$a_i,b_i,c_i $, there cannot be two which are negative, and 
 if one 
such a number is negative, then it is $= \ -1$; hence
the only 
possibilities are:

$\{a_i,b_i,c_i\} =\{1,1,1\}$ or $\{-1,1,3\}$, for 
$i \in
\{2, \ldots , 
5\}$.

\bigskip
\noindent
$\underline{(\lambda_1,\lambda_2,\lambda_3)=(3,3,3):}$
then 
we get for the character sheaves:
$$
\mathcal{L}_1 = \hol(3L -E_1 - 
\frac{b_2+c_2}{2}E_2-
\frac{b_3+c_3}{2}E_3- \frac{b_4+c_4}{2}E_4- 
\frac{b_5+c_5}{2}E_5),
$$
$$
\mathcal{L}_2 = \hol(3L -E_1 - 
\frac{a_2+c_2}{2}E_2-
\frac{a_3+c_3}{2}E_3- \frac{a_4+c_4}{2}E_4- 
\frac{a_5+c_5}{2}E_5),
$$
$$
\mathcal{L}_3 = \hol(3L -E_1 - 
\frac{a_2+b_2}{2}E_2-
\frac{a_3+b_3}{2}E_3- \frac{a_4+b_4}{2}E_4- 
\frac{a_5+b_5}{2}E_5).
$$

\medskip
Note that $(a_i,b_i,c_i) 
=(1,1,1)$ for all $i \in \{2, \ldots ,5\}$
implies that $p_g(\mathcal{X}_0) 
\neq 0$, whence w.l.o.g. 
$$(a_2,b_2,c_2)
= (-1,1,3).$$
 Then $E_2 
\leq \Delta_1$, and by the local calculations in
section 
\ref{locdefnode} this implies that also $E_3 \leq \Delta_1$ 
(since
the two lines of the branch locus intersecting a $(-2)$-curve 
belong
to the same $\Delta_i$). Therefore 
$$(a_3,b_3,c_3) \in 
\{(-1,*,*),
(1,1,1)\}.$$

Again using $p_g(\mathcal{X}_0) = 0$, we 
conclude (looking at
$\mathcal{L}_3$) that (up to exchanging $P_4$ 
with $P_5$)
$$(a_4,b_4,c_4) \in \{(3,1,-1), (1,3,-1)\},$$ 
and
again 
this implies that
$$(a_5,b_5,c_5) \in \{(*,*,-1), (1,1,1)\}.$$

But 
in all of these cases we have $$\frac{a_i + c_i}{2} \in \{0,1\} \ 
\forall \in \{2, \ldots , 5\},$$ contradicting $p_g = 
0$.

\medskip
\noindent
$\underline{(\lambda_1,\lambda_2,\lambda_3)=(1,3,5):}$
here 
we have
$$
\mathcal{L}_1 = \hol(4L -E_1 - 
\frac{b_2+c_2}{2}E_2-
\frac{b_3+c_3}{2}E_3- \frac{b_4+c_4}{2}E_4- 
\frac{b_5+c_5}{2}E_5),
$$
$$
\mathcal{L}_2 = \hol(3L -E_1 - 
\frac{a_2+c_2}{2}E_2-
\frac{a_3+c_3}{2}E_3- \frac{a_4+c_4}{2}E_4- 
\frac{a_5+c_5}{2}E_5),
$$
$$
\mathcal{L}_3 = \hol(2L -E_1 - 
\frac{a_2+b_2}{2}E_2-
\frac{a_3+b_3}{2}E_3- \frac{a_4+b_4}{2}E_4- 
\frac{a_5+b_5}{2}E_5).
$$
Again, $p_g = 0$ implies that there is an 
$i \in \{2, \ldots , 5\}$
such that $\frac{a_i +c_i}{2} =2$. W.l.o.g. 
we can assume that
$\frac{a_2+c_2}{2} =2$. Therefore 
$$(a_2,b_2,c_2) 
\in \{(3,-1,1),
(1,-1,3)\},$$
 whence 
$$(a_3,b_3,c_3) \in 
\{(3,-1,1), (1,-1,3),
(1,1,1)\}.$$
 Then $\frac{b_2+c_2}{2}, \ 
\frac{b_3+c_3}{2} \leq 1$ and
$\frac{b_4+c_4}{2}, \ \frac{b_5+c_5}{2} 
\leq 2$, which implies that
$\hol(L-E_2-E_4) \subset 
\hol(K_{\hat{Y}_0}) \otimes \mathcal{L}_1$,
contradicting 
$p_g(\mathcal{X}_0) = 
0$.

\bigskip
\noindent
$\underline{(\lambda_1,\lambda_2,\lambda_3)=(1,1,7):}$
this 
case can be excluded since 
$$4 = \De_3 \cdot  (-  K_{\hat{Y}_0}) = 3 
\la_3 - 1 - \sum_{i=2}^5 c_i  \Rightarrow  12 \geq \sum_{i=2}^5 c_i = 
16,$$
a contradiction.

This proves the 
proposition.
\end{proof}

Consider a one parameter family of bidouble 
covers $\mathcal{X} \ra
\mathcal{Y}$ as  in prop. \ref{onenode}.
Then 
$Y' :=\mathcal{Y}_0$ is a normal Del Pezzo surface with exactly
one 
node. 

Let $\tilde{Y}$ be the blow up of $Y'$ in the node and
denote 
the exceptional $(-2)$-curve of $\tilde{Y}$ over the node 
by
$A$.

The following result concludes the proof of theorem 
\ref{closed}.

\begin{prop}
For the limit of a one parameter family 
of extended  Burniat
surfaces with $K_S^2 =4$ we 
have:
\begin{enumerate}
\item if $A$ does not intersect 
$\Delta-A$, 
then $\mathcal{X}_0$ is an extended  Burniat surface with $K_S^2 =4$ 
;
\item if $A$ intersects $\Delta-A$, then
$\mathcal{X}_0$ is a nodal 
Burniat surface with $K_S^2 =4$ 
.
\end{enumerate}

\end{prop}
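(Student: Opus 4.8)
The plan is to play the global constraint on $\Delta$ against the local classification of Section \ref{locdefnode}. Globally, $\Delta$ is the flat limit of the branch loci of the extended Burniat covers $\mathcal{X}_t$; by the corollary to Proposition \ref{familydc} it is reduced, and it consists of the specializations of $8$ lines and $2$ conics. Locally at the node, $\mathcal{X}_0 \to Y'$ is a Du Val bidouble cover of the node $Y' = \{xy = z^2\}$, so it is one of the six cases of Section \ref{locdefnode}, and the dichotomy in the statement is exactly the dichotomy between two of those local pictures: $A$ is an isolated component of $\Delta$ (case (2) of Section \ref{locdefnode}, giving a cover smooth over the node) versus $A$ meets other branch components (case (4) of Section \ref{locdefnode}, giving a node on the cover). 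The first thing I would record is that the cover ramifies over the node for every member of the family, so $A = N_1$ is always a component of $\Delta$; and that the character sheaves $\mathcal{L}_i$ of the bidouble cover are rigid, being the flat limits of those of $\mathcal{X}_t$ and, by part 4) of Remark \ref{difference} (cf.\ the construction in Proposition \ref{famiglia4}), independent of the parameter. Hence $\mathcal{X}_0$ is determined among bidouble covers of $\tilde{Y}$ with these character sheaves by the location of its branch divisor, and it suffices to identify $\Delta$.

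A preliminary computation I would make is that none of the $8$ branch lines and neither of the two conics $C_1 \in |L-E_1|$, $\Gamma_2 \in |2L-E_2-E_3-E_4-E_5|$ meets the $(-2)$-curve $A = N_1$, whereas the two lines $E_1$ and $L-E_2-E_3$ into which $\Gamma_2$ can admissibly degenerate do meet $A$, each in one point. Since lines specialize to lines and two branch lines cannot collide, and since the only reduced degeneration of $\Gamma_2$ is the admissible one $\Gamma_2 = N_1 + E_1 + |L-E_2-E_3|$ (Remark \ref{difference}, 5-2), the position of $A$ relative to $\Delta - A$ is governed precisely by whether or not $\Gamma_2$ degenerates.

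In case (1) of the statement, $A \cap (\Delta - A) = \emptyset$, this computation shows that $\Gamma_2$ stays irreducible, so $\Delta$ is $8$ lines and $2$ irreducible conics, i.e.\ the extended Burniat configuration of Definition \ref{df}; moreover $A$ is then a connected component of $\Delta$, so locally we are in case (2) of Section \ref{locdefnode} and $\mathcal{X}_0$ is smooth over the node. Together with the rigidity of the $\mathcal{L}_i$ this identifies $\mathcal{X}_0$ as an extended Burniat surface with $K_S^2 = 4$. In case (2) of the statement, $A \cap (\Delta - A) \neq \emptyset$, a component meeting $A$ can only come from a degenerate conic, hence $\Gamma_2 = N_1 + E_1 + |L-E_2-E_3|$ and the two transversal fibres $E_1, L-E_2-E_3$ pass through the node. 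The rigidity of the $\mathcal{L}_i$ puts both fibres in the same branch divisor while $A$ sits in another, which is exactly case (4) of Section \ref{locdefnode}, giving a single node on $\mathcal{X}_0$; the remaining divisor is then the nodal Burniat configuration of Definition \ref{df}, so $\mathcal{X}_0$ is a nodal Burniat surface with $K_S^2 = 4$.

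The hard part will be the bookkeeping that turns ``numerically the extended/nodal configuration'' into ``literally an extended/nodal Burniat surface'': one has to check that, once the branch lines and the (possibly broken) conics are located among the $12$ lines of $Y'$, the divisors $\Delta_i$ are genuinely linearly equivalent --- not merely numerically equivalent --- to the prescribed Burniat, resp.\ extended Burniat, divisors, the rigidity of the character sheaves being the tool that makes this rigidification work. One must also carefully exclude, using the reducedness of $\Delta$ and the transversality of the two limit fibres, the other local possibilities of Section \ref{locdefnode} --- in particular case (3), which would give an $A_3$ point and a locally disconnected cover, and cases (5), (6), which would require a cuspidal or tacnodal branch curve incompatible with a reduced admissible conic degeneration.
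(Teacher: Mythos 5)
Your overall strategy --- track the flat limit of each branch component and reduce to the local dichotomy of Section \ref{locdefnode} --- founders on a case you never consider. The central claim, that ``the position of $A$ relative to $\Delta-A$ is governed precisely by whether or not $\Gamma_2$ degenerates,'' and hence that in case (2) the component meeting $A$ ``can only come from a degenerate conic $\Gamma_2 = N_1 + E_1 + |L-E_2-E_3|$,'' is not justified and is in fact false as stated: the \emph{other} conic $C_1 \in |L-E_1|$ can also degenerate. Remark \ref{difference}, 5-2) does not help you here, because it concerns a fixed surface on which $C_1$ is irreducible \emph{by the fiat of Definition \ref{df}}; in a one-parameter limit that irreducibility is exactly what must be proved. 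Concretely, keep the five points fixed and let the line $C_1(t)$ through $P_1$ tend to the line through the collinear points $P_1,P_4,P_5$: its strict transform on $\tilde{Y}$ is $N_1+E_4+E_5$, so in the limit the two lines $E_4,E_5$ through the node enter the branch locus while $\Gamma_2$ stays irreducible. This limit is reduced on $Y'$ (two distinct lines through the node), and it has the same divisor classes as before since $N_1+E_4+E_5 \equiv L-E_1$; hence neither the reducedness corollary nor your appeal to the rigidity of the character sheaves $\mathcal{L}_i$ excludes it. Your case (2) would then wrongly conclude that $E_1$ and $L-E_2-E_3$ are branch components, when in this scenario they are not.

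The limit in that scenario \emph{is} a nodal Burniat surface, but only after changing the blow-down structure: one needs the symmetries of the configuration (Cremona transformations based at $P_2,P_3$ and one of the collinear points, together with relabelings of $P_1,P_4,P_5$, which act as the full symmetric group on the four lines $E_1,E_4,E_5,L-E_2-E_3$ through the node) to bring the branch divisor into the standard form of Definition \ref{df}. This is precisely how the paper's proof proceeds, and why it never tracks which component limits to which: it works intrinsically on $\mathcal{Y}_0$, shows the eight lines disjoint from $A$ all lie in $\Delta$, invokes the local classification of Section \ref{locdefnode} to see that in case (2) exactly two of the four lines through the node lie in $\Delta$, normalizes \emph{w.l.o.g.} which two by the symmetry just described, and only then pins down the distribution into $\Delta_1,\Delta_2,\Delta_3$ by the lattice constraints (divisibility of $\Delta_i+\Delta_j$, effectivity of the $\Delta_i$, and $p_g=0$), exactly as in Proposition \ref{onenode}. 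Your proof has no counterpart of the normalization step nor of this combinatorial classification, and without them the final identification ``the remaining divisor is then the nodal Burniat configuration'' does not follow. A secondary, fixable, omission: transporting the linear systems $|L-E_1|$ and $|2L-E_2-E_3-E_4-E_5|$ to the central fibre at all requires a simultaneous resolution of the equisingular family $\mathcal{Y} \ra T$ and monodromy-free parallel transport of the marking; and it is precisely with respect to that transported marking that the degeneration of $C_1$ above defeats your case analysis.
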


\begin{proof}
We can assume that 
$\tilde{Y} = \hat{\PP}^2(P_1, \ldots , P_5)$, and w.l.o.g. 
$P_1,P_4,P_5$ collinear, i.e.,
$A \equiv L-E_1-E_4-E_5$. 

Recall 
that we have shown that in both cases $A$ is contained in the branch 
locus,
hence the two alternatives are that $A$ is a connected 
component of the branch locus, or not.

1) In the first case, 
argueing as in proposition \ref{onenode}, we get that  the total 
branch locus is $\Delta \equiv
-3K_{\tilde{Y}} +A$. 

It is easy to 
see that $\tilde{Y}$ contains exactly
8 lines $l_1, \ldots , l_8$ 
which do not intersect $A$. Then these 8 lines
have to be contained 
in  $\Delta$. 

Then $\Delta - A- \sum_{i=1}^8 l_i
\equiv 3L - \sum 
E_i$, which has to split into two Del Pezzo conics,
which then have 
to be $L-E_1$ and $2L - \sum _{i=2}^5 E_i$. Hence we
get an extended 
Burniat surface.

\noindent
2)
Here $ L-E_1 -E_4-E_5 \equiv A \leq 
\Delta \equiv -K_{\tilde{Y}}$. 

Observe that
$\tilde{Y}$ contains 
exactly 4 lines intersecting $A$: $E_1,E_4,E_5,
L-E_2-E_3$. By our 
local calculations in section \ref{locdefnode} two
of these four 
lines are components of the total branch divisor and
the  two other 
not.

W.l.o.g. we can assume $E_1, L-E_2-E_3 \leq \Delta$. Since 
$E_4$ and
$E_5$ are not contained in the branch divisor, we see 
(writing $\Delta_i$
as in the proof of proposition \ref{onenode}) that$(a_4,b_4,c_4) 
=
(a_5,b_5,c_5) = (1,1,1)$.

Now it is straightforward that 
$(\lambda_1, \lambda_2, \lambda_3) = (3,3,3)$
(use the same argument 
as in the proof of prop. \ref{onenode} to
exclude the cases $(1,3,5)$ 
and $(1,1,7)$).

 Since $p_g= 0$, we have (up to a permutation of 
\{1,2,3\})
$$
\frac{b_1+c_1}{2} =\frac{a_2+c_2}{2}= \frac{a_3+b_3}{2} 
= 2.
$$

W.l.o.g. we can assume $(a_1,b_1,c_1) = (-1,1,3)$; then 
$E_1, L-E_2-E_3
\leq \Delta_1$. 

Therefore 
$$(a_2,b_2,c_2) \in 
\{(3,-1,1), (1,-1,3)\}$$
 and
$$(a_3,b_3,c_3) \in \{(3,1,-1), 
(1,3,-1)\}.$$

 But only $(a_2,b_2,c_2) =
(3,-1,1)$ and 
$(a_3,b_3,c_3) = (1,3,-1)$ is possible (since a cubic
cannot have two 
triple points, i.e., this would contradict the
effectivity of 
$\Delta_i$ for some $i$). 

Therefore we get a nodal
Burniat 
surface.
\end{proof}

\section{Nodal and extended Burniat surfaces do 
not form a closed set for $K^2_S = 3$}

We are going to exhibit 
surfaces which are in the closure of the family of nodal and extended 
Burniat surfaces, but for which the image 
of the bicanonical map is 
a normal cubic with other singularities than 3 nodes.

In our first 
example we exhibit a 3 -dimensional family with a 4-nodal cubic as 
image.

Consider a specialization of the 6 points $P_1, \dots , P_6$ 
in $\PP^2$ so that $P_1, P_2, P_3$ become collinear,
and, more 
precisely, the point $P_2$ moves in the line joining $P_4$ and $P_6$ 
till it reaches the line joining $P_1$ and $P_3$.

Then $P_1, \dots , 
P_6$ are the vertices of a complete quadrilateral with sides  $N_1, 
N_2, N_3, N_4$:
here we identify $N_4$ to the (-2) curve $N_4 \equiv 
L - E_1 - E_2 - E_3$ on the weak Del Pezzo $\tilde{Y}$ of degree 
3
obtained blowing up the 6 points. Our notation for $N_1, N_2, N_3$ 
remains the same, and $\tilde{Y}$ 
is the minimal resolution of the 
4-nodal cubic surface $Y' : = \Sigma$.

We consider exactly the same 
divisors as the strictly extended Burniat divisors in 4) of 
definition \ref{df}.
We obtain a three dimensional family of bidouble 
covers $X$ of  $\Sigma$, with total branch locus
consisting of 9 
connected components, namely:
$$ N_1, N_2, N_3 ; \  \Ga_1, \Ga_2, 
\Ga_3 ; \  G_1, G_2 , G_3. $$

$G_1, G_2 , G_3$ correspond to the 
three diagonals of the quadrilateral, and are the 3 lines of 
$\Sigma$
not passing through the nodes, whereas $\Ga_1, \Ga_2, \Ga_3$ 
are conics as in definition \ref{df}.
The canonical models $X$ have 
therefore 4 nodes lying over the node of $\Sigma$ corresponding to 
$N_4$.

We have therefore proven:
\begin{prop}
The closure of the 
(4-dimensional) open set corresponding to nodal and extended Burniat 
surfaces with  $K^2_X = 3$
contains a 3-dimensional family of 
canonical models which are bidouble covers of a 4-nodal cubic surface 
$\Sigma$.

Each such surface $X$ has 4 nodes, lying over one fixed 
node of $\Sigma$, and  where the bicanonical map 
$\Phi_2 \colon X 
\ra \Sigma$ is unramified.

\end{prop}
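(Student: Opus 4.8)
The plan is to exhibit these bidouble covers as flat limits of strictly extended Burniat surfaces, and then to read off their singularities from the local dictionary of Section \ref{locdefnode} applied at the new node $\nu_4$.

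First I would make the degeneration precise. Starting from a strictly extended Burniat configuration $P_1,\dots,P_6$ with its three collinearities (defining $N_1,N_2,N_3$), I let $P_2=P_2(s)$ move along the line $\overline{P_4P_6}$, so that $N_2\equiv L-E_2-E_4-E_6$ remains a $(-2)$-curve, until at $s=0$ it reaches $\overline{P_1P_3}$; this creates the extra collinearity $P_1,P_2,P_3$, i.e. the $(-2)$-curve $N_4\equiv L-E_1-E_2-E_3$, and degenerates $Y'$ to the $4$-nodal cubic $\Sigma$. Keeping the classes of Definition \ref{df}, 4) fixed, the associated bidouble covers form a flat family of canonical models over the parameter disc (the branch divisors stay in fixed linear systems, and flatness near the nodes is precisely the local picture of Section \ref{locdefnode}, exactly as in Proposition \ref{famiglia3}); for $s\neq 0$ the fibre lies in $\sN\sE\sB_3$, so the central fibre lies in $\overline{\sN\sE\sB_3}$. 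For the dimension count, the strictly extended family is $4$-dimensional and the single condition $P_2\in\overline{P_1P_3}$ cuts out a $3$-dimensional subfamily.

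Next I would identify the branch locus and isolate $N_4$. Decomposing the divisors of Definition \ref{df}, 4) gives $\De_1=G_1+\Ga_1+N_2$, $\De_2=G_2+\Ga_2+N_3$ and $\De_3=G_3+\Ga_3+N_1$, so the total branch divisor is $N_1+N_2+N_3+\Ga_1+\Ga_2+\Ga_3+G_1+G_2+G_3$, the nine components listed. The one genuinely necessary (but routine) computation is that each of these nine classes meets $N_4$ in intersection number $0$; hence the branch locus is disjoint from $N_4$, that is, from the node $\nu_4$ of $\Sigma$ corresponding to $N_4$.

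The heart of the matter, and the step I expect to be the only real obstacle, is the local structure of the cover over $\nu_4$. Analytically $\Sigma$ near $\nu_4$ is the node $\CC^2/\{\pm 1\}$, whose smooth locus has fundamental group $\ZZ/2\ZZ$; consequently every $(\ZZ/2\ZZ)^2$-cover is disconnected and is governed by its monodromy homomorphism $\ZZ/2\ZZ\to(\ZZ/2\ZZ)^2$, and the cover splits into four disjoint sheets — each filling in to a copy of the node $\nu_4$, hence four nodes on $X$ mapping isomorphically onto $\nu_4$ — exactly when this monodromy is trivial. For each nontrivial character $\chi_i$ the corresponding double cover is unramified near $\nu_4$, and its monodromy is the image of the half-branch class $\Lambda_i$ (where $2\Lambda_i\equiv\De_{i-1}+\De_{i+1}$, by remark \ref{difference}, 4)) in the local Weil divisor class group $\mathrm{Cl}(\nu_4)\cong\ZZ/2\ZZ$ (as $N_4^2=-2$), namely the parity $(\Lambda_i\cdot N_4)\bmod 2$. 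Since $\De_j\cdot N_4=0$ for every $j$ by the previous step, $2(\Lambda_i\cdot N_4)=0$, so $\Lambda_i\cdot N_4=0$ and all three monodromies vanish: the cover is totally split over $\nu_4$, producing four nodes on which $\Phi_2$ is an isomorphism, hence unramified. Finally, over $\nu_1,\nu_2,\nu_3$ the curves $N_i$ do lie in the branch locus, so the local behaviour is unchanged from the strictly extended Burniat case and $X$ is smooth there; the four nodes over $\nu_4$ are therefore the only singularities of $X$, which completes the proof.
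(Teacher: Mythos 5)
Your proposal is correct and follows the same construction as the paper: the same specialization of $P_2$ along $\overline{P_4P_6}$ until it reaches $\overline{P_1P_3}$, the same strictly extended Burniat divisors kept in their linear systems, and the same nine-component branch locus $N_1,N_2,N_3,\Ga_1,\Ga_2,\Ga_3,G_1,G_2,G_3$. Where you genuinely add something is the final step: the paper merely asserts that $X$ has four nodes over $\nu_4$, whereas you prove it, by computing the local monodromy at $\nu_4$ in the local class group $\mathrm{Cl}(\nu_4)\cong \ZZ/2\ZZ$, using $2\Lambda_i\equiv \De_{i-1}+\De_{i+1}$ and $\De_j\cdot N_4=0$ to conclude $\Lambda_i\cdot N_4=0$ (the integer identity does force the parity to vanish). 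This is exactly the point needed to exclude the competing local possibility, namely case (2) of section \ref{locdefnode}, where the bidouble cover would consist of two smooth sheets branched only at the node point; the paper leaves this implicit. There is, however, a one-line alternative working upstairs on $\tilde{Y}$ which is closer to the paper's implicit reasoning: since the branch divisor avoids $N_4$ (your intersection computation, plus the trivial remark that $N_4$ is not a component of any of the nine curves, so intersection number zero gives disjointness), the restriction of the smooth bidouble cover $\tilde{S}\ra\tilde{Y}$ over $N_4\cong\PP^1$ is an étale degree $4$ cover of a simply connected curve, hence four disjoint copies of $N_4$, i.e.\ four disjoint $(-2)$-curves, which contract to four nodes of the canonical model. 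One small correction: near $\nu_4$ the flatness of your degenerating family is not ``the local picture of section \ref{locdefnode}'' (that section concerns covers branched at the node); it is simply that the cover is étale there, so flatness follows from flatness of the family of cubic surfaces $\mathcal{Y}\ra T$, while the section \ref{locdefnode} picture is only relevant at $\nu_1,\nu_2,\nu_3$, where the local structure does not change in the family.
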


In our second example we 
obtain a 3-dimensional family of bidouble covers of a cubic surface 
$Y'$ with a singularity of type $D_4$.

We give this example using 
the different planar realization which was indeed the way we found 
our first description 
of the deformation of nodal Burniat surfaces 
with $K^2_S = 3$ to extended Burniat surfaces.

To do this, we 
relabel the 6 points in the plane as follows: 
$$P'_3 : = P_4 , \ 
P'_2 : = P_5 , \ P'_1 : = P_6 .$$

We have therefore irreducible 
rational curves 
\begin{multline*} D_{i,1} : = L - E_i - E_{i+1}, \  D_{i,2} : =N_i 
=  L - E_i - E'_{i+1}- E'_{i+2}, \\  
D_{i,3} : =G_i =  L - E_i - 
E'_{i}
\end{multline*}

on the weak Del Pezzo $\hat{Y}$ of degree 3.

Blowing down 
the 3 (-1) curves $D_{i,1}$ ($i=1,2,3$) first, and then the strict 
transform of the 3 (-2) curves
$D_{i,2}$ ( $i=1,2,3$) we obtain 
another copy of the projective plane where one has blown up
three 
points $Q_i$ ($i=1,2,3$) and three points $Q'_i$ ($i=1,2,3$), where 
$Q'_{i}$ is infinitely near to $Q_i$.

We denote by slight abuse of 
notation by $Q_i$ the full transform of the point $Q_i$, namely, the 
divisor
$D_{i,1} + D_{i-1,2}$, and by $Q'_i$ the full transform of 
the point $Q'_i$,namely, the divisor
$D_{i,1}$. 

The pull back of 
the system of lines in the new $\PP^2$ is, by the Hurwitz formula, 
the linear system 
$$ \sL : = 4 L - 2 \sum_i E_i -   \sum_i E'_i.$$

And the curve $D_{i,3}$ is linearly equivalent to 
$$D_{i,3} \equiv 
\sL - 2 D_{i+1,1}- D_{i,2} = \sL - Q_{i+1}- Q'_{i+1}. $$

Hence 
$D_{i-1,2} = Q_i - Q'_i$, and we can write the branch loci for the 
extended Burniat surfaces
as:
$$\De_i \in  D_{i,3} +  D_{i+1,2} + |D_{i,3} + D_{i+1,3} | = D_{i,3} 
+  |Q_{i-1}- Q'_{i-1} | + |D_{i,3} + D_{i+1,3} |=$$
$$ = | \sL - Q_{i+1}- Q'_{i+1} | +  |Q_{i-1}- Q'_{i-1} | +  | 2  \sL 
- Q_{i+1}- Q'_{i+1} - Q_{i-1}- Q'_{i-1}| =$$
$$ = | \sL - Q_{i+1}- Q'_{i+1} | +  N_{i-1}  +  | 2  \sL - Q_{i+1}- 
Q'_{i+1} - Q_{i-1}- Q'_{i-1}| .$$

Now, we simply let the three points $Q_1, Q_2, Q_3$ become collinear, 
but we let the tangent directions
$Q'_i$ remain general.

The blow up of the plane in the 6 points possesses now 4 (-2) curves, 
the three curves $N_1, N_2, N_3$
and  the strict transform $N$ of the line through $Q_1, Q_2, Q_3$. 
Since $N$ intersects each $N_i$ and these are disjoint,
the corresponding normal Del Pezzo surface $Y'$ has a singularity of 
type $D_4$.

Letting the branch divisor be as before (namely, take pull backs of 
general conics in $ | 2  \sL - Q_{i+1}- Q'_{i+1} - Q_{i-1}- Q'_{i-1}| 
$),
we obtain

\begin{prop}
The closure of the (4-dimensional) open set corresponding to nodal 
and extended Burniat surfaces with  $K^2_X = 3$
contains a 3-dimensional family of canonical models which are 
bidouble covers of a normal cubic surface $Y'$
with a singularity of type $D_4$.

The branch locus on $Y'$ has the singular point as an isolated point, 
and the local covering is determined
by the epimorphism  $D_4 \ra (\ZZ/ 2 \ZZ )^2 =( D_4)^{ab}$ of the 
local fundamental group of the singularity to its
abelianization.

\end{prop}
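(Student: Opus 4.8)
The plan is to work throughout in the second planar model, in which $\tilde{Y}$ is the blow-up of $\PP^2$ at the three points $Q_1,Q_2,Q_3$ and the three infinitely near points $Q'_i$, with $N_{i-1}$ the proper transform of the exceptional curve over $Q_i$ and $D_{i,1}$ the exceptional curve over $Q'_i$, and $\sL = 4L - 2\sum_i E_i - \sum_i E'_i$ the pullback of a line. First I would pin down the singularity type of $Y'$. Imposing that $Q_1,Q_2,Q_3$ be collinear produces the proper transform $N$ of the line through them, of class $N \equiv \sL - Q_1 - Q_2 - Q_3$ (full transforms). A short computation in this basis gives $N^2=-2$, $N\cdot K_{\tilde{Y}}=0$ and $N\cdot N_i=1$ for $i=1,2,3$, while the $N_i$ remain pairwise disjoint $(-2)$-curves; hence $\{N,N_1,N_2,N_3\}$ is a star-shaped configuration whose dual graph is the Dynkin diagram $D_4$. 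Because the tangent directions $Q'_i$ are kept general, no further $(-2)$-class becomes effective, so these are all the $(-2)$-curves. Thus $\tilde{Y}$ is a weak Del Pezzo surface of degree $3$ whose anticanonical model $Y'=\Sigma$ is a normal cubic surface in $\PP^3$ carrying a single rational double point of type $D_4$, obtained by contracting $N,N_1,N_2,N_3$.

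Next I would settle the dimension and the closure statement. Keeping the branch data of Definitions \ref{df} and \ref{ext}, each conic $\Ga_i \in |2\sL - Q_{i+1}-Q'_{i+1}-Q_{i-1}-Q'_{i-1}|$ varies in a pencil, giving three parameters, while the pointed configuration (collinear $Q_i$ with general directions $Q'_i$) is rigid modulo $\mathrm{PGL}_3$; this yields exactly a three-dimensional family. To place these surfaces in $\overline{\sN\sE\sB_3}$ I would let $Q_1,Q_2,Q_3$ move from a general position, where $Y'$ is the $3A_1$-cubic covered by nodal and extended Burniat surfaces, to the collinear position, and invoke Proposition \ref{famiglia3}: the bidouble covers are reconstructed from the reflexive character sheaves $\sF_i$ and the multiplication maps attached to a flat family of Weil divisors on the family of cubics, so the $D_4$-cover arises as the flat limit of genuine extended Burniat surfaces and therefore lies in the closure.

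I would then analyze the branch locus and the local cover. The total branch divisor $\De_1+\De_2+\De_3 = \sum_i(G_i + N_{i-1} + \Ga_i)$ contains $N_1,N_2,N_3$ (each once) but not $N$, while $G_i=D_{i,3}\equiv \sL - Q_{i+1}-Q'_{i+1}$ and the conics $\Ga_i$ are the only non-exceptional components. A direct computation gives $G_i\cdot N = G_i\cdot N_j = \Ga_i\cdot N = \Ga_i\cdot N_j = 0$ for all $i,j$, so the lines $G_i$ and conics $\Ga_i$ are disjoint from every $(-2)$-curve. Contracting to $\Sigma$, their images avoid the singular point $p$, whereas $N_1,N_2,N_3$ collapse to $p$; hence the branch locus of $X\to\Sigma$ consists of three lines and three conics together with the isolated point $p$, and the cover is \'etale over a punctured neighbourhood $U^{\ast}$ of $p$. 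Since $\pi_1(U^{\ast})$ is the local fundamental group of the $D_4$ singularity, namely the binary dihedral (quaternion) group of order $8$, and since any surjection of this group onto the abelian group $(\ZZ/2\ZZ)^2$ factors through its abelianization $\{\pm 1\}$-quotient $\cong (\ZZ/2\ZZ)^2$ and is then an isomorphism, the $(\ZZ/2\ZZ)^2$-cover is forced to be the one attached to the abelianization epimorphism $D_4 \to (D_4)^{ab}=(\ZZ/2\ZZ)^2$, unique up to $\Aut((\ZZ/2\ZZ)^2)$. As a consistency check, the intermediate double cover is $\CC^2/\{\pm 1\}$, so $X$ acquires a single node over $p$, confirming that $X$ is a bona fide canonical model.

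The main obstacle will be the closure statement: one must verify that the collision of the three nodes into a single $D_4$-point is realized by a genuinely flat family of bidouble covers, i.e. that the local analysis of section \ref{locdefnode} extends across this limit so that $p_*\hol_X$ remains a flat family of character sheaves. The verification that no spurious $(-2)$-curve appears for general $Q'_i$ (so that the singularity is exactly $D_4$ and $\Sigma$ is a cubic with a single rational double point), together with the identification of the punctured local cover with the abelianization epimorphism, are the remaining points requiring care, though both are comparatively routine once the intersection computations above are in hand.
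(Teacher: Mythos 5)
Your construction coincides with the paper's, and your verifications of what the paper merely asserts are correct: the class $N \equiv \sL - Q_1 - Q_2 - Q_3$, the computation giving the $D_4$ dual graph, the vanishings $G_i\cdot N = G_i\cdot N_j = \Ga_i\cdot N = \Ga_i\cdot N_j = 0$ (so the singular point is an isolated point of the branch locus), and the dimension count. For the local statement, however, you take a genuinely different route: the paper computes the inverse images of the $(-2)$-curves in the bidouble cover of $\tilde{Y}$ --- the preimage of $N$ is an irreducible $(-8)$-curve $N'$, each $N_i$ pulls back to a pair of disjoint $(-1)$-curves meeting $N'$, and contracting the six $(-1)$-curves leaves a $(-2)$-curve, i.e.\ a node of $X$ over $p$ --- whereas you argue by purity of the branch locus plus the group theory of the quaternion group $Q_8 = \pi_1(U^*)$.

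That group-theoretic step has a genuine gap. You classify the \emph{surjections} $Q_8 \ra (\ZZ/2\ZZ)^2$, but you never show that the homomorphism $\pi_1(U^*) \ra (\ZZ/2\ZZ)^2$ classifying the restricted cover is surjective, i.e.\ that the preimage of $U^*$ in $X$ is connected. This is not automatic, and it is precisely the point where the distribution of $N_1, N_2, N_3$ among the three branch divisors enters --- an input your proof never uses. By Mumford's theorem, $\pi_1(U^*)$ is generated by the meridians of $N, N_1, N_2, N_3$; the meridian of $N$ maps to $1$ (it is not a branch component), and the meridian of $N_i$ maps to $\sigma_j$, where $N_i \subset \De_j$. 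In the configuration at hand one has $\De_1 \supset N_2$, $\De_2 \supset N_3$, $\De_3 \supset N_1$, one curve in each divisor, so the image is all of $(\ZZ/2\ZZ)^2$ and your argument then closes. But if, say, all three $N_i$ lay in a single $\De_j$, the image would be $\{1,\sigma_j\}$, the local cover would be disconnected, and the conclusion false; your proof as written does not distinguish the two situations. (Your ``consistency check'' via the node cannot substitute for this, since it presupposes the conclusion.) The fix is this one line of monodromy --- equivalently, the irreducibility of $N'$, which is exactly what the paper's explicit preimage computation encodes. A lesser point: Proposition \ref{famiglia3} cannot be invoked verbatim for the closure statement, since there $Y'$ is a fixed $3$-nodal cubic and only the branch divisors move, while here $Y'$ itself degenerates from the $3$-nodal cubic to the $D_4$-cubic; what one actually uses is the flat family of smooth bidouble covers of the varying weak Del Pezzo surfaces and passage to relative canonical models, which is standard and is what you anticipate, somewhat imprecisely, in your final paragraph.
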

\begin{proof}
The inverse image of the  (-2) curves in the bidouble cover are: the 
inverse image $N'$ of $N$, which is a (-8) curve,
and, for each $N_i$, there is a pair of (-1) curves meeting  $N'$. 
After contracting the 6 (-1) curves  we obtain
a (-2) curves.

\end{proof}

{\em Acknowledgements:} Thanks to Stephen Coughlan for writing a MAGMA script
in order to verify the calculations of proposition \ref{dimensions}.


\smallskip
\noindent {\bf Authors' Addresses:}\\
\noindent I.Bauer, F. Catanese, \\ Lehrstuhl Mathematik VIII,\\
Mathematisches Institut der Universit\"at
Bayreuth\\ NW II,  Universit\"atsstr. 30\\ 95447 Bayreuth\\
          email: ingrid.bauer@uni-bayreuth.de,
                 fabrizio.catanese@uni-bayreuth.de

\section{Appendix: an alternative proof of statements 1), 2), 3) of 
proposition \ref{dimensions}}

In this appendix we present other methods to calculate the space of sections
of twisted logarithmic sheaves, in particular  a fibration method.

Assume that we have $d$ smooth rational curves $C_\alpha \subset Y$
contained in a smooth
algebraic surface $Y$, meeting with distinct tangents in a point $O$,
a divisor $B_\alpha$ on $C_\alpha$ of degree $0$, $1$ or
$2$, and disjoint from
$O$, and let $Z$ be the blow up of $Y$ in the point $O$.
Denote by $D_\alpha$ the strict transform of $C_\alpha$,
and denote by $\Omega_{Y}^1 ((\log C_{\alpha}(-B_{\alpha}))_{\alpha \in A}) $
the sheaf which is the inverse image, under the residue sequence, of
$\oplus_{\alpha \in A} \hol_{C_{\alpha}}(-B_{\alpha}) $.

Then by 4) of  proposition \ref{Hauptlemma} we have an exact sequence
\begin{multline*}
    0 \ra  \Omega_{Y}^1 ((\log C_{\alpha}(-B_{\alpha}))_{\alpha \in A}) \ra  \\
\ra p_*\Omega^1_{Z}( (\log D_{\alpha}(-B_{\alpha}))_{\alpha
\in A})(E)
\ra \CC^{d-2}_O \ra 0
\end{multline*}

which is exact on global sections if
$$ h: = dim_{\CC}H^1 (\Omega_{Y}^1 ((\log
C_{\alpha}(-B_{\alpha}))_{\alpha \in A})) = 0.$$
Or, more generally, iff $ h = h'$, where
$$ h': = dim_{\CC}H^1 (\Omega_{Z}^1 ((\log
D_{\alpha}(-B_{\alpha}))_{\alpha \in A})) .$$
Consider the exact sequence
$$ 0 \ra \Omega_{Y}^1 \ra \Omega_{Y}^1 ((\log
C_{\alpha}(-B_{\alpha}))_{\alpha \in A}) \ra
\bigoplus_{\alpha=1}^d \hol_{C_{\alpha}}(-B_{\alpha}) \ra 0,
$$
and assume that $H^2(\Omega_{Y}^1) = 0$.

Then $ h = a + b $, where $ a $ is the number of $\alpha$'s such that
$B_\alpha$ has degree
$2$, while $b$ is the difference of the dimensions between $H^1(\Omega_{Y}^1) $
and the subspace generated by the Chern classes of the $C_{\alpha}$'s such that
$B_\alpha$ has degree
$0$. If  we choose $Y = \PP^2$ then $h=0$ as soon as no $B_\alpha$ has degree
$2$, and some $B_\alpha$ has degree
$0$.

Otherwise, one can calculate $h'$ in a similar way. We assume for
simplicity that
$Y = \PP^2$. We have a similar exact sequence
$$ 0 \ra \Omega_{Z}^1 (E) \ra \Omega_{Z}^1 ((\log
D_{\alpha}(-B_{\alpha}))_{\alpha \in A})(E)
\ra  \bigoplus_{\alpha=1}^d \hol_{C_{\alpha}}(O -B_{\alpha}) \ra 0,
$$
and since $H^1 (\hol_{C_{\alpha}}(O -B_{\alpha}) )= 0$
by our assumption, we get that $h'$ is the dimension of the cokernel of
      $$\bigoplus_{\alpha=1}^d H^0 (\hol_{C_{\alpha}}(O -B_{\alpha}) )
\ra H^1 (\Omega_{Z}^1 (E) ).$$
To calculate the last space, observe that
$$\Omega_{Z}^1 \otimes \hol_E = \hol_E(-2) \oplus \hol_E(1) $$
whence $h^1 (\Omega_{Z}^1 (E))= h^1 (\Omega_{Z}^1) + 1 = 3.$

These  criteria can now be used in order to prove statements 1), 2), 
3) of proposition \ref{dimensions}.

We can  prove 1) and 2) simultaneously for $i=1$.

Observe that $D_1 = \De_1 + N_1$, that
$ \Lambda_1 = L_1 + N_1$, and apply  Lemma \ref{poles}
in order to conclude that
    $$H^0 ( \Omega_{\tilde{Y}}^1 (\log (\De_1))(E_1 - E_{3} + N_1 )) 
\cong H^0 ( \Omega_{\tilde{Y}}^1 (\log (D_1))(E_1 - E_{3} )).$$

By Lemma \ref{Hauptlemma} we can blow down $E_3$ and obtain
$H^0 ( \Omega_{\tilde{Y'}}^1 (\log (D'_1))(E_1)).$
In this case the respective degrees of the divisors $B_{\alpha}$ are
$0,1,2$ hence $h=1$.
We have to decide whether  $h'$ is  $0$ or $1$.
We contract  $E_3, E_4, E_5$, and we let $Z$ be the blow up of the
plane in $P_1$.
We must calculate $h^0 ( \Omega_{Z}^1 (\log (C_{\alpha} ( -
B_{\alpha}))) (E_1)).$
Here the curves $C_{\alpha} $ are fibres of the ruling of $Z$, $f
\colon Z \ra  \PP^1$.
Using the exact sequence
$$(**)  \ 0 \ra f^* \Omega_{\PP^1}^1 \ra   \Omega_{Z}^1 \ra \omega_{Z|\PP^1}=
\hol_Z (- F - 2 E_1) \ra 0$$
we obtain the analogous sequence
$$0 \ra f^* \hol_{\PP^1}(1)(E_1) \ra   \Omega_{Z}^1 (\log (C_{\alpha}
)(E_1) \ra  \hol_Z (- F -
E_1) \ra 0$$
to infer that
$$H^0(f^* \hol_{\PP^1}(1) (E_1))= H^0 (  \Omega_{Z}^1 (\log
(C_{\alpha} ))(E_1)).$$
We are imposing some vanishing on three points
lying in two fibres, hence we get the sections of
$H^0( \hol_Z ( F + E_1))= p^* H^0( \hol_{\PP^2}( 1))$ vanishing in the
three points  $P_4, P_5, P_2$,
whence we conclude that this space  has dimension  $=0$.

This argument  shows 1) also for $i=2,3$.

For a nodal Burniat with $m=2$  the space
$H^0 ( \Omega_{\tilde{Y}}^1 (\log (D_i))(E_i - E_{i+2} ))$,
vanishes for $i=2$, but it has dimension equal to $1$ for
$i=3$, since then the three points  $P_4, P_5, P_1$ are collinear.

Let's proceed with 2).

     For $i=2,3$ $$ H^0 ( \Omega_{\tilde{Y}}^1 (\log \De_i) (K_{\tilde{Y}} +
\Lambda_i  )) = H^0 ( \Omega_{\tilde{Y}}^1 (\log \De_i) (E_i - E_{i+2} )).$$
By applying again Lemma \ref{Hauptlemma}  for $i=3$ we can blow down
the curve $E_2$
and the curve $E_3$ and apply the residue sequence
to the sheaf $ \Omega_{\tilde{Y'}}^1 (\log \De'_3)$. Since each component
     is smooth and rational, we find that
$H^0 ( \Omega_{\tilde{Y'}}^1 (\log \De'_3)) = ker ( \CC^4 \ra H^1
(\Omega_{\tilde{Y'}}^1 ))$, while
$H^1 ( \Omega_{\tilde{Y'}}^1 (\log \De'_3)) = Coker ( \CC^4 \ra H^1
(\Omega_{\tilde{Y'}}^1 ))$.

The map is given by the Chern classes of $ L-E_1,   L-E_4, L-E_5,
L-E_1-E_4 - E_5.$
These generate a rank $4$ subspace of the 4-dimensional space $H^1
(\Omega_{\tilde{Y'}}^1)$
(we are blowing up 3 points in the plane),
     whence $h^0 ( \Omega_{\tilde{Y'}}^1 (\log \De'_3)) = 0 ,
     h^1 ( \Omega_{\tilde{Y'}}^1 (\log \De'_3)) = 0$.

We conclude, by the exact cohomology sequence associated to
$$ 0 \ra  \Omega_{\tilde{Y'}}^1 (\log \De'_3) \ra f_* (
\Omega_{\tilde{Y}}^1 (\log \De_3)
(K_{\tilde{Y}} + \Lambda_3  )) \ra \CC_{P_3} \ra 0,$$
    that $ h^0 ( \Omega_{\tilde{Y}}^1 (\log \De_3)
(K_{\tilde{Y}} + \Lambda_3  )) = 1 +h^0 ( \Omega_{\tilde{Y'}}^1 (\log
\De'_3)) =1.$

For  the case $i=2$ recall that $\De_2 \in  | L- E_2 - E_4| +  | L-
E_2 - E_5| +
    | 2L- E_2 - E_3- E_4 - E_5|$ consists of three smooth connected components.

    Blow down $ E_1,  E_3,  E_4 , E_5$ and obtain the ruled surface $Z $
equal to the
    blow up of the plane in $P_2$. Denote by $ f : Z \ra \PP^1$ the
standard fibration.

    The direct image $\De_2' : = f_* \De_2$  decomposes as the union of
two fibres $F_4$ and
    $F_5$ and a section $C$ with $C \cdot E_2 = 1$.

    We have to calculate the space of global sections of
    $$\sF : = \mathfrak M_{P_1} \Omega^1_Z ( \log F_4, \log F_5, \log
C(-P_3))(E_2)$$

   satisfying two linear conditions imposed by the points $P_4, P_5$.

    Using the exact sequence (**) we get the exact sequence

    $$0 \ra \mathfrak M_{P_1}  \hol_{Z}(E_2) \ra   \sF \ra  \mathfrak
M_{P_3} \mathfrak M_{P_1}
    \hol_Z (- F -
E_2 + C) \ra 0.$$

Observe that $ \hol_Z (- F -
E_2 + C) $ has degree 0 on each fibre and degree 1 on $E_2$. If $ D
\equiv - F -
E_2 + C \equiv L - E_2$ is effective, then $D$ is a fibre. Since no
fibre contains both $P_1, P_3$,
we obtain $$H^0 ( \mathfrak M_{P_3} \mathfrak M_{P_1}
    \hol_Z (- F -
E_2 + C) )= 0.$$
Since $|E_2|$ consists of the curve $E_2$, which does not contain
$P_1$, we conclude
that $ H^0 ( M_{P_1}  \hol_{Z}(E_2)) = H^0 (   \sF) = 0$.

To prove 3), by symmetry, we may assume without loss of generality that $i=1$.

Blow down all the curves $E_j$ excet $E_1$, so that , as usual, we
have the blow up $Z$
of the plane in a point ($P_1$) and  the standard fibration $ f : Z \ra \PP^1$.

By Lemma \ref{Hauptlemma} and since $E_3$ is a connected component of $D_1$,
the direct image $\sF$ of $ \Omega_{\tilde{Y}}^1 (\log (D_1) (E_1
- E_{3} )  $ is contained in  $ \Omega_{Z}^1 (\log (F_2 + F_6 +
F_{4,5})) (E_1 )   $
where $F_j$ denotes the unique fibre of $f$ passing through the point $P_j$.

More precisely, we have an exact sequence

$$0 \ra   \mathfrak M_{P_2} \mathfrak M_{P_4} \mathfrak M_{P_5}
\mathfrak M_{P_6} \hol_{Z}( F + E_1) \ra   \ \sF  \ra
    \hol_Z (- F -
E_1) \ra 0.$$

Clearly $H^0 ( \hol_Z (- F -
E_1)) = 0$ since $ F \cdot ( F + E_1) = 1$. On the other hand  $H^0 (
\hol_{Z}( F + E_1)) =  H^0 ( \hol_{\PP^2}(1))$,
hence the fact that the points $P_2, P_4,P_5,P_6$ are not collinear
implies the desired vanishing
$$ H^0 ( \mathfrak M_{P_2} \mathfrak M_{P_4} \mathfrak M_{P_5}
\mathfrak M_{P_6} \hol_{Z}( F + E_1))=0.$$

Thus 3) is proven.

\end{document}